\documentclass[]{aspm}

\ifdefined\pdfpagewidth
  \AtBeginDocument{%
    \pdfpagewidth=210mm
    \pdfpageheight=297mm
  }%
\fi

\articleinfo{}{}{}

\usepackage{verbatim}
\usepackage{amssymb}
\usepackage{amsbsy}
\usepackage{amscd}
\usepackage{amsmath}
\usepackage{amsthm}
\usepackage[mathscr]{eucal}
\usepackage{xcolor}

\newtheorem{theorem}{Theorem}
\numberwithin{defn}{section}

\newcommand{\R}{\mathbb{R}}
\newcommand{\dd}{\,\mathrm{d}}
\newcommand{\db}{{\mathrm d}}
\newcommand{\Land}{\operatorname{Land}^n(\R^d)}
\newcommand{\Landz}{\operatorname{Land}^n_0(\R^d)}
\newcommand{\Sreg}{\mathcal{S}_{\mathrm{reg}}}
\newcommand{\Sregone}{\mathcal{S}_{\mathrm{reg}, 1}}

\newcommand{\sym}{\operatorname{sym}}
\newcommand{\diver}{\operatorname{div}}

\newcommand{\norm}[1]{\left\lVert #1\right\rVert}

\theoremstyle{plain}
\newtheorem{proposition}[theorem]{Proposition}
\newtheorem{corollary}[theorem]{Corollary}
\newtheorem{lemma}[theorem]{Lemma}

\theoremstyle{definition}
\newtheorem{definition}[theorem]{Definition}

\newtheorem{assumption}[theorem]{Assumption}

\title[Stochastic completeness for landmark shape spaces]{Stochastic Completeness for Landmark Shape Spaces with Induced Metrics}

\author[K. Habermann]{Karen Habermann}
\author[M. Nishino]{Mao Nishino}
\author[S. Sommer]{Stefan Sommer}

\address{Department of Statistics, University of Warwick, Coventry, CV4 7AL, United Kingdom}

\address{Department of Mathematics, Florida State University, Tallahassee, FL 32306, United States}

\address{Østervoldgade 3, DK-1350, Copenhagen, Denmark}

\email{sommer@di.ku.dk}

\rcvdate{}
\rvsdate{}

\subjclass[2010]{}

\keywords{}

\begin{document}

\begin{abstract}
    Landmark shape spaces arise from configurations of $n$ distinct landmarks with rigid motions factored out and scale normalized.  When equipped with sufficiently regular metrics inherited from right-invariant metrics on diffeomorphism groups, the shape spaces inherit geodesic completeness from geodesics of diffeomorphisms. We now answer the corresponding stochastic question if the Riemannian Brownian motion on landmark shape spaces exists for all time and hence rule out initially distinct landmarks colliding when following a Brownian flow. We show that with general classes of metrics, including degenerate metrics with infinitesimal rigid motions in their null space, landmark shape spaces are stochastically complete, thus making the use of Riemannian Brownian motion for modelling shape stochasticity in applied fields well-founded.
\end{abstract}

\maketitle

\section{Introduction}
\label{sec:intro}
Kendall's shape spaces \cite{kendallShapeManifoldsProcrustean1984} of configurations of $n$ landmarks (points) in $\R^d$ with rigid motions factored out and scale normalized form one of the most widely used constructions in shape analysis, morphology and medical image analysis. The metric on the shape space is here inherited from the Euclidean metric on $\R^{nd}$ and thus of $L^2$-type with no coupling between the landmarks and no guarantee for initially distinct landmarks to avoid colliding along geodesic evolutions of the configurations. A separate approach to shape analysis is equipping the shape space with a metric inherited from a right-invariant metric on diffeomorphisms of the shape domain. This construction is behind the Large deformation diffeomorphic metric mapping (LDDMM) approach, see~\cite{younesShapesDiffeomorphisms2010} for further details. While LDDMM shape spaces are traditionally considered separate from Kendall's shape spaces, the work \cite{joshi2026landmark} constructs a unification of the two approaches. In this setting, with sufficiently regular metrics on the diffeomorphism group, the shape space arising when factoring out rigid motions as well as normalizing scale and inheriting a right-invariant metric on diffeomorphisms, resulting in a stratified space, is generally geodesically complete in the sense of geodesics existing for infinite time and not exiting the shape space.

The Riemannian Brownian motion on landmark manifolds with induced metrics has been used extensively in shape analysis for modelling stochastic perturbations of shapes, taking the place of the Euclidean Brownian motion as a default null-model of stochasticity used in applied fields, e.g. in phylogenetics. For such constructions to work, it is crucial that the process is well-defined and does not explode in finite time. Precise conditions on the metric for this to be the case have recently been derived in \cite{habermann2026stochasticcompletenesslandmarkspace}, after a first preliminary result in this direction has been obtained in~\cite{habermannLongtimeExistenceBrownian2024}. However, the analogous situation when factoring out rigid motions and thus arriving at landmark shape spaces has so far not been treated.

In this paper, we show that for general classes of scalar metrics and for the conditionally positive metrics of \cite{joshi2026landmark}, the landmark shape spaces are indeed stochastically complete, precisely stated in Theorem~\ref{thm:stoch-complete} in Section~\ref{sec:stoch-completeness-proof}. We arrive at this by an application of Masamune's theorem in ~\cite{masamune2005analysis} that, under specific control of the metric, asserts that the process will stay on the regular stratum of the shape space for all time almost surely. To establish this, we need to consider the capacity of the lower-dimensional strata and use a certain volume growth control for the metric.

\subsection{Paper outline}
In Section~\ref{sec:background}, we survey the necessary background on landmark configuration spaces, particularly when quotienting out rigid motion and normalizing for scale as in Kendall's shape spaces but keeping the metric inherited from an acting diffeomorphism group. We also describe the metrics and kernel co-metrics arising from right-invariant metrics on diffeomorphisms. In Section~\ref{sec:main-assumptions-lemmas}, we state the main assumptions allowing the completeness proof and prove some common results needed in the rest of the paper. Section~\ref{sec:stoch-completeness-proof} presents the main stochastic completeness proof. In Section~\ref{sec:metrics-assumptions}, we show that for general classes of scalar metrics and for the conditionally positive landmark shape space matrix-valued kernels of \cite{joshi2026landmark}, the required assumptions are indeed fulfilled. We close in Section~\ref{sec:scale} by specifically discussing the stochastic completeness for the scale normalized configuration model.

\section{Background}
\label{sec:background}
We here outline the needed background on shape spaces, descending metrics, and stochastic completeness from Masamune's criterion.

Throughout the paper, constants may depend on the fixed parameters $n$ and $d$ and the fixed metric data. Any additional dependencies and any independence needed for limiting arguments are specified locally.

\subsection{Geometry of landmark shape spaces}
Let
\begin{equation*}
    \Land
    =
    \{(q_1,\dots,q_n)\in(\mathbb R^d)^n:
    q_i\neq q_j\text{ for all }i\neq j\}
\end{equation*}
be the landmark space of \(n\) distinct ordered landmarks in \(\mathbb R^d\).
The special Euclidean group \(SE(d)\) acts elementwise on \(\Land\). Since the
translation subgroup acts freely, we may identify the quotient by translations
with the centered landmark space
\begin{equation*}
    \Landz
    =
    \left\{
    (q_1,\dots,q_n)\in\Land:
    \sum_{i=1}^n q_i=0
    \right\}.
\end{equation*}
Thus, we have
\begin{equation*}
    \Land/SE(d)
    \cong
    \Landz/SO(d).
\end{equation*}
The remaining action of the special orthogonal group $SO(d)$ on $\Landz$ is given by
\begin{equation*}
    R\cdot(q_1,\dots,q_n)
    =
    (Rq_1,\dots,Rq_n),
    \qquad R\in SO(d).
\end{equation*}
This action is smooth and proper, since \(SO(d)\) is compact, but it is not
free in general. Hence, the quotient \(\Landz/SO(d)\) is a stratified space, and we characterize the stratification by the rank of the configuration. Throughout this paper, we work on the full-rank locus and call it the regular stratum:
\begin{equation*}
    \left\{
    q\in\Landz:
    \operatorname{rank}[q_1,\dots,q_n] = d
    \right\}/SO(d).
\end{equation*}
We denote this full-rank regular stratum by $\Sreg$.
\subsubsection{Tangent and cotangent spaces}
Let us first consider the tangent and cotangent spaces of \(\Landz\). The tangent space \(T_q\Landz\) at a configuration \(q\in\Landz\) is given by
\begin{equation*}
    T_q\Landz
    =
    \left\{
    v=(v_1,\dots,v_n)\in(\mathbb R^d)^n:
    \sum_{i=1}^n v_i=0
    \right\},
\end{equation*}
and the cotangent space \(T_q^*\Landz\) is given by
\begin{equation*}
    T_q^*\Landz
    =
    \left\{
    \alpha=(\alpha_1,\dots,\alpha_n)\in(\mathbb R^d)^n:
    \sum_{i=1}^n \alpha_i=0
    \right\}.
\end{equation*}
The natural pairing between \(T_q\Landz\) and \(T_q^*\Landz\) is given by
\begin{equation*}
    \langle \alpha, v\rangle
    =
    \sum_{i=1}^n \langle \alpha_i, v_i\rangle,
\end{equation*}
where \(\langle\cdot,\cdot\rangle\) on the right-hand side denotes the Euclidean inner product on \(\mathbb R^d\).

We will now consider the tangent and cotangent spaces for the quotient \(\Landz/SO(d)\), where we will focus on the regular stratum \(\Sreg\). The tangent space \(T_q\Sreg\) at a configuration \(q\in\Sreg\) is given by the quotient
\begin{equation*}
    T_q\Sreg
    \cong
    T_q\Landz/\mathfrak{so}(d)\cdot q,
\end{equation*}
where $\mathfrak{so}(d) = \{A\in\R^{d\times d}:A^\top=-A\}$ is the Lie algebra of $SO(d)$ and $\mathfrak{so}(d)\cdot q$ is the orbit of the infinitesimal action of $\mathfrak{so}(d)$ on $q$. The cotangent space \(T_q^*\Sreg\) is given by the annihilator of \(\mathfrak{so}(d)\cdot q\) in \(T_q^*\Landz\) through linear algebra as
\begin{equation*}
    \begin{aligned}
    T_q^*\Sreg &\cong (\mathfrak{so}(d)\cdot q)^\circ \\
    &= \left\{
    \alpha\in T_q^*\Landz:
    \langle \alpha, p(q)\rangle = 0
    \text{ for all }p\in\mathfrak{so}(d)
    \right\}.
    \end{aligned}
\end{equation*}
Both the tangent and cotangent spaces of the regular stratum \(\Sreg\) have the following representation in terms of the tangent and cotangent spaces of \(\Landz\):
\begin{align*}
    T_q\Sreg &\cong \left\{v\in T_q\Landz : \sum_{i=1}^{n}q_iv_i^\top\text{ is symmetric}\right\}, \\
    T_q^*\Sreg &\cong \left\{\alpha\in T_q^*\Landz : \sum_{i=1}^{n}q_i\alpha_i^\top\text{ is symmetric}\right\}.
\end{align*}

\subsubsection{The cometric}
We will consider a symmetric kernel of the form $K\colon\mathbb{R}^{d}\times \mathbb{R}^d\to\mathbb{R}^{d\times d}$ and define the cometric \(g^{-1}\) on \(\Landz\) by
\begin{equation*}
    g^{-1}_q(\alpha,\beta)
    =
    \sum_{i,j=1}^n \alpha_i^\top K(q_i,q_j)\beta_j.
\end{equation*}
This form of cometric appears when a right-invariant metric on the diffeomorphism group induces a metric on a shape space through the left-action of the diffeomorphisms, see~\cite{younesShapesDiffeomorphisms2010}.
We then define the cometric \(g^{-1}\) on the regular stratum \(\Sreg\) of the quotient \(\Landz/SO(d)\) by the usual Riemannian submersion construction. Using the representation of the tangent and cotangent spaces of \(\Sreg\) in terms of the tangent and cotangent spaces of \(\Landz\), this means that we define the cometric \(g^{-1}\) on \(\Sreg\) by restricting the cometric \(g^{-1}\) on \(\Landz\) to the cotangent space \(T_q^*\Sreg\) of the regular stratum \(\Sreg\).

We will consider two cases of kernels $K$.
\paragraph{\bf Positive definite translationally and rotationally invariant kernels.}
Following \cite{habermann2026stochasticcompletenesslandmarkspace}, we consider positive definite kernels \(K\) of the form
\begin{equation*}
    K(q_i, q_j) = k(\|q_i - q_j\|) I_d,
\end{equation*}
where $\|\cdot \|$ is the Euclidean norm on \(\mathbb{R}^d\), \(I_d\) is the \(d\times d\) identity matrix, and \(k\colon[0,\infty)\to\mathbb{R}\) is guaranteed to be a positive and strictly decreasing function. Such kernels are translationally and rotationally invariant.

An important class of examples comes from Sobolev operators, which are the differential operators of the form \((\mathrm{Id} - \sigma^2 \Delta)^s\) for \(s >  d/2, \sigma>0\), as discussed in \cite{habermann2026stochasticcompletenesslandmarkspace}. The corresponding kernels \(k_{\rm Matern}\), so-called Mat\'ern kernels, are the Green's functions of these operators. As we will show in Proposition \ref{prop:cpd_lower_bound_matern}, the Mat\'ern kernel associated with $(\mathrm{Id} - \sigma^2 \Delta)^{s+1}$ bounds from below the CPD cometric that we subsequently introduce, and thus provides a shortcut for comparison properties needed for the stochastic completeness.

\paragraph{\bf Conditionally positive definite kernels.} 
A class of kernels arising from a Sobolev-type operator called the screened elasticity operator are introduced in \cite{joshi2026landmark}. These are adapted to the landmark shape space setting in having infinitesimal rigid motions in their null-space, thus they are not positive definite on the full landmark tangent space. We first start with the diffeomorphism group $\mathrm{Diff}_{\mathcal{A}}(\mathbb{R}^d)$ on the ambient space $\mathbb{R}^d$ with an appropriate decay condition $\mathcal{A}$. The corresponding tangent space at a diffeomorphism $\phi$ is a space of smooth vector fields on $\mathbb{R}^d$ composed with $\phi$, again with some decay condition, which we denote by $\mathfrak{X}_{\mathcal{A}}$.  We then define the screened elasticity operator $L_s u$ for $s>d/2$ and $u\in \mathfrak{X}_{\mathcal{A}}$ by
\begin{equation*}
    L_s u = -(\mathrm{Id} - \sigma^2 \Delta)^{s} \circ \mathrm{div} \circ \mathrm{sym}\nabla u , \quad (\mathrm{sym}\nabla u)_{ij} = \partial_i u_j + \partial_j u_i,
\end{equation*}
where $\sigma>0$ is a fixed parameter and $\mathrm{div}$ is the matrix divergence $(\mathrm{div} A)_j = \sum_{i=1}^d \partial_i A_{ij}$ for a matrix field $A\colon\mathbb{R}^d\to \mathbb{R}^{d\times d}$. 

The Green's function $k_s\colon \mathbb{R}^d \to \mathbb{R}^{d\times d}$ of $L_s$ defines on the cotangent space of $\Landz/\mathrm{SO}(d)$ the cometric
\[
g_q^{-1}(\alpha,\beta)
=
\sum_{i,j=1}^n
\left\langle
\alpha_i,k_s(q_i-q_j)\beta_j
\right\rangle.
\]

\subsection{Masamune's criterion for stochastic completeness}
The main ingredient in our proof of stochastic completeness is a criterion due to Masamune \cite{masamune2005analysis}, which we state below. Let $(M,g)$ be a connected Riemannian manifold of class $C^{1,1}$ without boundary, and let $\overline{M}$ be its completion with respect to the Riemannian distance $d_g$ defined by the Riemannian metric $g$. We note that Masamune defines the completion with respect to the so-called \emph{intrinsic distance}, but in a smooth manifold this is equal to the Riemannian distance, see~\cite[Remark 1]{masamune2005analysis}.

We denote by $\partial M = \overline{M}\setminus M$ the metric boundary of $M$ and by $W_0^{1,2}(M)$ the completion of functions in $C^{1,1}$ with compact support in $M$ with respect to the norm
\begin{equation*}
    \|f\|_{1,2} = \|f\|_{L^2(M,\mu_g)} + \|\nabla f\|_{L^2(M,\mu_g)},
\end{equation*}
where $\mu_g$ is the Riemannian volume measure on $M$. We further need the following definition.
\begin{definition}[{Capacity, \cite{masamune2005analysis}}]
    Let $\Sigma \subset \overline{M}$ be a Borel set. Denote by $\mathcal{O}$ the family of open sets $O$ of $\overline{M}$ such that $\Sigma \subset O$. Let $L(O)$ be the set of functions $f\in W_{0}^{1,2}(M)$ such that $0\leq f\leq 1$ and $f|_{O\cap M} = 1$ a.e. The capacity of $\Sigma$ is defined by
    \begin{equation*}
        \mathrm{Cap}(\Sigma) = \inf_{O\in \mathcal{O}} \inf_{f\in L(O)} \|f\|_{1,2}.
    \end{equation*}
    We say that $\Sigma$ is almost polar if $\mathrm{Cap}(\Sigma) = 0$.
\end{definition}
We will now show an auxiliary lemma used in the calculation of capacity. 
\begin{lemma}
    \label{lem:subadditivity}
    For any sequence of Borel sets $\Sigma_1, \Sigma_2, \cdots \subset \overline{M}$ with $\mathrm{Cap}(\Sigma_i) = 0$, we have 
    \begin{equation*}
        \mathrm{Cap}\left(\bigcup_{i=1}^{\infty}\Sigma_i\right) = 0.
    \end{equation*}
\end{lemma}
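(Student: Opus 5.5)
We prove countable subadditivity at the level needed: given $\varepsilon>0$, we produce an open set $O\supset\bigcup_i\Sigma_i$ and a function $f\in L(O)$ with $\|f\|_{1,2}\le\varepsilon$. Since $\mathrm{Cap}(\Sigma_i)=0$, for each $i$ there exist an open set $O_i\supset\Sigma_i$ of $\overline{M}$ and $f_i\in W^{1,2}_0(M)$ with $0\le f_i\le 1$, $f_i=1$ a.e. on $O_i\cap M$, and $\|f_i\|_{1,2}\le \varepsilon 2^{-i}$. Set $O=\bigcup_i O_i$, which is open and contains $\bigcup_i\Sigma_i$. The natural candidate is $f=\sup_i f_i$.

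First we check that the partial maxima $F_N=\max_{1\le i\le N} f_i$ lie in $W^{1,2}_0(M)$ with the pointwise bounds $|F_N|\le\sum_{i\le N}|f_i|$ and $|\nabla F_N|\le \sum_{i\le N}|\nabla f_i|$ a.e. This uses the lattice property of $W^{1,2}_0(M)$: for $u,v\in W^{1,2}_0$ we have $\max(u,v)=\tfrac12(u+v+|u-v|)$, and $|w|\in W^{1,2}_0$ with $|\nabla|w||=|\nabla w|$ a.e. These are standard facts for first-order Sobolev spaces on Riemannian manifolds, proved locally in charts. One also has to approximate by $C^{1,1}$ compactly supported functions; we do this via the usual approximation $\sqrt{w^2+\delta^2}-\delta$ followed by mollification in charts. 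By the triangle inequality in $L^2$, we get $\|F_N\|_{1,2}\le\sum_i\|f_i\|_{1,2}\le\varepsilon$ uniformly in $N$.

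Next we pass to the limit. $F_N$ increases pointwise to $f$, and $0\le f\le1$. Since $(F_N)$ is bounded in the Hilbert space $W^{1,2}_0(M)$, taken with the equivalent Hilbert norm $(\|\cdot\|_2^2+\|\nabla\cdot\|_2^2)^{1/2}$, a subsequence converges weakly to some $g\in W^{1,2}_0(M)$. Weak convergence in $L^2$ together with monotone $L^2$ convergence $F_N\to f$ (by monotone convergence, since $F_N\le\sum_i f_i\in L^2$) gives $g=f$. Weak lower semicontinuity of the norm then yields $\|f\|_{1,2}\le C\varepsilon$, with $C$ a constant from the norm equivalence (at most $\sqrt2$). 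Finally, on $O\cap M$ every point lies in some $O_i\cap M$, where $f\ge f_i=1$ a.e. Since $O$ is a countable union, this gives $f=1$ a.e. on $O\cap M$. Hence $f\in L(O)$ and $\mathrm{Cap}(\bigcup_i\Sigma_i)\le C\varepsilon$ for every $\varepsilon$, so the capacity is zero.

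The main obstacle is the technical lattice step: justifying that maxima of $W^{1,2}_0(M)$ functions stay in $W^{1,2}_0(M)$, the closure of $C^{1,1}_c$, with the gradient bound. We handle this by the standard truncation and approximation argument in local charts with a partition of unity.
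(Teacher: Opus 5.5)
Your proof is correct and follows essentially the same route as the paper: build finite approximants that stay in $W_0^{1,2}(M)$ with a uniform norm bound, extract a weakly convergent subsequence, and identify its limit via strong $L^2$ convergence. The only differences are that you combine the functions by $\sup_i f_i$ using the lattice property, where the paper uses $\min\{1,\sum_i u_{n,i}\}$ and Fukushima's normal contraction property, and that you make explicit the weak lower semicontinuity step (with the $\sqrt{2}$ norm-equivalence constant) which the paper leaves implicit.
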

\begin{proof}
    Let $\Sigma = \bigcup_{i=1}^{\infty}\Sigma_i$. Since each $\Sigma_i$ has capacity zero, there exists a sequence of open sets $\Sigma_i\subset O_{n,i}$ and functions $u_{n,i}\in W_{0}^{1,2}(M)$ such that $0\leq u_{n,i}\leq 1$ and $u_{n,i}|_{O_{n,i}\cap M} = 1$ so that $\|u_{n,i}\|_{1,2} \leq \frac{1}{2^{i+n}}$. We need to construct a sequence of open sets $O_n$ with $\Sigma \subset O_n$ and functions $u_n$ so that $u_n \in W_{0}^{1,2}(M)$, $0\leq u_{n}\leq 1$, $u_{n}|_{O_n \cap M} = 1$ and $\|u_{n}\|_{1,2} \to 0$ as $n\to \infty$. 

    The obvious choices are $O_n = \bigcup_{i=1}^{\infty}O_{n,i}$ and
    \begin{equation*}
        u_n = \min\left\{1, \sum_{i=1}^{\infty}u_{n, i} \right\},
    \end{equation*}
    but we need to show that $u_n$ indeed satisfies the required condition. The only nontrivial condition is that $u_n\in W_{0}^{1,2}(M)$, which we will show in detail here. We first define
    \begin{equation*}
        v_{n,N} = \min\left\{1, \sum_{i=1}^{N}u_{n, i} \right\}.
    \end{equation*}
    We note that $u_{n,i} \in W_0^{1,2}(M)$ for all $i$, so the finite sum of them is also in $W_0^{1,2}(M)$. Since $v_{n,N}$ is a normal contraction of an element in $W_0^{1,2}(M)$ in the sense of Fukushima \cite[p. 5]{fukushima1980dirichlet}, and $W_0^{1,2}(M)$ is the domain of the Riemannian Dirichlet form, we have that $v_{n,N}\in W_{0}^{1,2}(M)$.

    Now, $\|v_{n,N}\|_{1,2}$ is bounded across $N$ because
    \begin{equation*}
        \|v_{n,N}\|_{1,2} \leq \sum_{i=1}^{N}\|u_{n,i}\|_{1,2} \leq \sum_{i=1}^{N} \frac{1}{2^{i+n}}\leq \sum_{i=1}^{\infty} \frac{1}{2^{i+n}} = \frac{1}{2^n}.
    \end{equation*}
    Therefore, using the fact that $W_{0}^{1,2}(M)$ is Hilbert (to be rigorous, it has a norm equivalent to a Hilbert norm), $v_{n,N}$ has a subsequence $v_{n,N_k}$ that converges to $v_n\in W_0^{1,2}(M)$ weakly. We will now show that $v_n = u_n$. Since $W_{0}^{1,2}(M)$ embeds continuously into $L^2$, we have $v_{n,N_k} \to v_{n}$ weakly in $L^2$. We show that $v_{n,N}$ strongly converges to $u_n$ in $L^2$, and hence $v_n=u_n$ by the uniqueness of limits. 

    We have that $v_{n,N} \to u_n$ pointwise, so we need to justify the dominated convergence theorem to show that  $\|v_{n,N} - u_n\|_{L^2} \to 0$. In fact, since $x\mapsto \min\{1, x\}$ is $1$-Lipschitz,
    \begin{equation*}
        |v_{n,N} - u_n|^2 \leq \left|\sum_{i=1}^{N}u_{n, i} - \sum_{i=1}^{\infty}u_{n, i}\right|^2 \leq \left| \sum_{i > N} u_{n,i} \right|^2 \leq \left| \sum_{i=1}^{\infty} u_{n,i} \right|^2,
    \end{equation*}
    and $\left| \sum_{i=1}^{\infty} u_{n,i} \right|^2$ is integrable because the sum is $L^2$ due to
    \begin{equation*}
        \left\|\sum_{i=1}^{\infty} u_{n,i}\right\|_{L^2} \leq \sum_{i=1}^{\infty }\|u_{n,i}\|_{L^2} \leq \sum_{i=1}^{\infty }\|u_{n,i}\|_{1,2} \leq \sum_{i=1}^{\infty}\frac{1}{2^{i+n}} <\infty.
    \end{equation*}
    Therefore, dominated convergence shows that $\|v_{n,N}-u_n\|_{L^2} \to 0 $ as $N\to \infty$, and thus $v_{n} = u_n\in W_{0}^{1,2}(M)$, as required.
\end{proof}

We now state the part of Masamune's theorem we need.
\begin{theorem}[Theorem 1(iii) in \cite{masamune2005analysis}]
    \label{thm:masamune}
    Let $(M,g)$ be a connected $C^{1,1}$-Riemannian manifold without boundary. If $\partial M$ is almost polar and the volumes $v(r)$ of the metric balls of radius $r$ centered at a fixed point $x\in M$ satisfy
    \begin{equation*}
        \int^\infty \frac{r}{\log v(r)} \dd r = \infty, \quad \text{ or } \int^\infty \frac{r}{v(r)} \dd r = \infty,
    \end{equation*}
    then $(M,g)$ is stochastically complete. Here, the lower limit of the integral is any positive number.
\end{theorem}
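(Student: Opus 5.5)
Since this is Masamune's theorem, quoted from \cite{masamune2005analysis}, the plan is to reconstruct its proof in three stages. First, transfer the Riemannian Dirichlet form from $M$ to the completion $\overline{M}$. Second, apply a volume-growth criterion for conservativeness of strongly local Dirichlet forms, due to Sturm and Grigor'yan. Third, use almost polarity of $\partial M$ to show that conservativeness on $\overline{M}$ is the same as stochastic completeness of $(M,g)$.

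\emph{Transferring the form.} Set $\mathcal{E}(f,f)=\int_M|\nabla f|^2\dd\mu_g$ with domain $W_0^{1,2}(M)$; this is the form whose semigroup is the minimal heat semigroup, that is, the Brownian motion killed at explosion. I would first show that $\mathrm{Cap}(\partial M)=0$ forces $\mu_g$, extended by zero to $\overline{M}$, not to charge $\partial M$. Next I would show that $W_0^{1,2}(M)$ coincides with the space $W^{1,2}_{\mathrm{b}}$ of $W^{1,2}$ functions on $M$ with $d_g$-bounded support in $\overline{M}$. The argument is a cut-off: take $u_k\in L(O_k)$ with $\|u_k\|_{1,2}\to0$, and for $f\in W^{1,2}_{\mathrm{b}}\cap L^\infty$ set $f_k=(1-u_k)f$. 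Then $\|f-f_k\|_{1,2}\to0$, since $\|u_kf\|_{L^2}+\|f\nabla u_k\|_{L^2}\le\|f\|_\infty\|u_k\|_{1,2}$ and $\|u_k\nabla f\|_{L^2}\to0$ by dominated convergence along a subsequence with $u_k\to0$ a.e. Each $f_k$ vanishes on the neighbourhood $O_k$ of $\partial M$ and has bounded support. With this identification, $\mathcal{E}$ becomes a strongly local regular Dirichlet form on $L^2(\overline{M},\mu_g)$. I would then check that its intrinsic metric $\sup\{f(x)-f(y):|\nabla f|\le1\}$ equals $d_g$, which holds on a smooth manifold by \cite[Remark 1]{masamune2005analysis}. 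The key point is that distance functions $d_g(x_0,\cdot)$, truncated, are admissible test functions even near $\partial M$.

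\emph{Volume-growth criterion.} Now Sturm's theorem for strongly local regular Dirichlet forms applies on $\overline{M}$. If $\int^\infty r/\log v(r)\dd r=\infty$, where $v(r)$ is the $\mu_g$-volume of intrinsic balls, then the form is conservative. Intrinsic balls in $\overline{M}$ and in $M$ have the same volume because $\mu_g(\partial M)=0$. The second condition implies the first, since $\log v\le v$ for large $r$; alternatively, it gives recurrence directly by Grigor'yan's test, and recurrence implies conservativeness.

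\emph{Back to $M$.} Since $\partial M$ has zero capacity for this form, the associated Hunt process on $\overline{M}$ started in $M$ almost surely never hits $\partial M$. On $M$ its semigroup agrees with the minimal heat semigroup, because the domains coincide by the cut-off step. Conservativeness, $P_t1=1$, therefore says that Brownian motion on $M$ has infinite lifetime.

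\emph{Main obstacle.} The delicate step is the transfer. Showing that functions of bounded support are actually compactly supported in $M$ after cutting off needs local compactness of $\overline{M}$, or at least of bounded sets away from $\partial M$. One also needs regularity of the form on the possibly non-locally-compact space $\overline{M}$. I would handle this as Masamune does: work with the form on $M$ directly, use cut-off functions built from $\min\{1,(R-d_g(x_0,\cdot))_+\}\cdot(1-u_k)$, and run Sturm's argument with these test functions rather than invoking regularity on $\overline{M}$ abstractly.
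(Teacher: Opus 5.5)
The paper gives no proof of this statement. It quotes Masamune's result and only checks the hypotheses in Sections 3--4. Your outline is the right one: almost polarity makes distance cut-offs admissible in the form domain, and the Sturm--Grigor'yan volume test then applies. However, the step that carries all the content is left open.

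\textbf{The gap.} You need $f_k=(1-u_k)f\in W_0^{1,2}(M)$. You say this would follow if bounded sets away from $\partial M$ were relatively compact, but in an incomplete manifold they need not be. For example, glue infinitely many thin capped tubes of length $1$ to $S^2\setminus\{p\}$ at points accumulating at $p$, with radii decaying fast. Then $\partial M=\{p\}$ is almost polar and $\mu_g(M)<\infty$, so the theorem's hypotheses hold. Yet the tube tips form a closed bounded set at distance $\geq1$ from $\partial M$ with pairwise distances $\geq2$, and $\overline M$ is not locally compact at $p$.

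Your fallback is circular. Put $\psi_R=\min\{1,(R-d_g(x_0,\cdot))_+\}$. Since $\psi_Ru_k\in W_0^{1,2}(M)$, the function $\psi_R(1-u_k)$ lies in $W_0^{1,2}(M)$ if and only if $\psi_R$ does, and that is exactly what is in question. For the same reason you cannot quote Sturm's theorem, or build a Hunt process, on $\overline M$: there is no local compactness and balls need not be relatively compact.

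\textbf{The missing idea.} You need an exhaustion adapted to $\{f_k\neq0\}$ rather than to $\overline M$.
\begin{enumerate}
\item Put $F=\overline M\setminus O_k$, set $\varphi=\max\{1,d_g(\cdot,F)/d_g(\cdot,\partial M)\}$ on $M$, and let $\tilde g=\varphi^2g$.
\item Since $d_g(z,F)>0$ for $z\in\partial M$, near $z$ one has $\varphi\geq c/d_g(\cdot,\partial M)$. So any path approaching $\partial M$ has logarithmically divergent $\tilde g$-length.
\item A $d_{\tilde g}$-Cauchy sequence is $d_g$-Cauchy because $\varphi\geq1$, and the divergence in step 2 rules out a limit in $\partial M$. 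Hence $(M,d_{\tilde g})$ is a complete, locally compact length space, and therefore proper.
\item The cut-offs $\chi_N=\min\{1,(N-d_{\tilde g}(x_0,\cdot))_+\}$ have compact support in $M$. They satisfy $|\nabla_g\chi_N|\leq\varphi=1$ a.e.\ on $\{f_k\neq0\}\subset F$.
\item By dominated convergence, $f_k\chi_N\to f_k$ in $W^{1,2}$. This gives $W_0^{1,2}(M)=W^{1,2}(M)$.
\end{enumerate}

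The integral hypothesis forces $v(R)<\infty$, so $\psi_R\in W_0^{1,2}(M)$ with $|\nabla\psi_R|\leq1$. Grigor'yan's (or Sturm's) weighted $L^2$ argument can then be run directly for the minimal semigroup on $M$, giving $P_t1=1$ with no passage to $\overline M$.
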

By this theorem, to prove that $\Sreg$ is stochastically complete, we need to show the following.
\begin{enumerate}
    \item The metric boundary $\partial \Sreg$ is almost polar.
    \item The volume growth of the metric balls in $\Sreg$ satisfies the integral condition $\int^{\infty} \frac{r}{\log v(r)} \dd r = \infty$ or $\int^{\infty} \frac{r}{v(r)} \dd r = \infty$.
\end{enumerate}
These results follow from four main assumptions on the given metric and the dimensions of the ambient space, which will be detailed in the subsequent sections.

We note that the theorem applies to connected manifolds, so we will show here briefly that $\Sreg$ is in fact connected under some assumptions.

\begin{lemma}
    $\Sreg$ is connected under the assumption that $d\geq 2, n \geq d+2$.
\end{lemma}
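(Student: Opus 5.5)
Since $\Sreg$ is the image of the set $U=\{q\in\Landz:\operatorname{rank}[q_1,\dots,q_n]=d\}$ under the continuous quotient map $\Landz\to\Landz/SO(d)$, it suffices to show that $U$ is connected. The plan is to write $U=V\setminus Z$. Here $V$ is the full-rank locus in the linear space
\[
H=\Big\{q\in(\R^d)^n:\textstyle\sum_i q_i=0\Big\}\cong\R^{(n-1)d},
\]
and $Z$ is the collision set
\[
Z=\bigcup_{i<j}\{q_i=q_j\}.
\]
Connectedness then follows from codimension counts.

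\textbf{Step 1: the rank-deficient set.} The centered configurations with $\operatorname{rank}[q_1,\dots,q_n]\le d-1$ form a real algebraic subset $D\subset H$. I will show $\operatorname{codim}_H D\ge 2$. Such a configuration has its points in a hyperplane through the origin, because the centering condition forces the affine span to be linear. The dimension count is then $(d-1)$ for the choice of hyperplane plus $(n-1)(d-1)$ for the centered points inside it. The resulting codimension is
\[
(n-1)d-(d-1)-(n-1)(d-1)=n-d.
\]
This is at least $2$ precisely because $n\ge d+2$. Here I expect the hypothesis on $n$ is used.

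\textbf{Step 2: the collision set.} Each set $\{q_i=q_j\}\cap H$ is a linear subspace of codimension $d\ge 2$, using $d\ge2$. So $Z$ is a finite union of closed subsets of codimension at least $2$.

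\textbf{Step 3: conclusion.} I then invoke the standard fact that the complement in $\R^N$ of a closed set contained in a finite union of submanifolds (or semialgebraic sets) of codimension at least $2$ is path connected. One way to see this is to join two points by a straight segment and perturb it by transversality; a generic path avoids sets of codimension $\ge2$. Apply this fact with $N=(n-1)d$ to $D\cup Z$, which is real algebraic and hence a finite union of smooth strata of the same or smaller dimension. This shows that $U=H\setminus(D\cup Z)$ is path connected, and so $\Sreg$ is connected.

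The main obstacle is making the codimension estimate for $D$ rigorous, since $D$ is singular. I would handle this by parametrizing $D$ as the image of the smooth map
\[
(\text{unit normal }\nu\in S^{d-1},\ \text{centered points in }\nu^\perp)\longmapsto q .
\]
Its domain has dimension $(d-1)+(n-1)(d-1)$, so its image has Hausdorff dimension at most $N-2$. A set of Hausdorff dimension at most $N-2$ does not disconnect $\R^N$. Equivalently, generic segments between two points of $U$ miss it, because the union of lines through a fixed point meeting it has dimension at most $N-1$.
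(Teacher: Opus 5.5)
Your proposal is correct and follows essentially the same route as the paper: pass to the preimage in the centered configuration space $\R^{(n-1)d}$, remove the collision set (codimension $d\geq 2$) and the rank-deficient locus (codimension $n-d\geq 2$), and use that closed sets of codimension at least two do not disconnect. The differences are only in bookkeeping. You get the codimension of the whole rank-deficient set at once by parametrizing it through a unit normal of a containing hyperplane, whereas the paper bounds each rank-$r$ stratum's codimension $(d-r)(n-1-r)$ below by $n-d$; and you justify avoidance by generic two-segment paths and a Hausdorff-dimension count, whereas the paper appeals to Wall's transversality lemma plus a Baire density argument.
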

\begin{proof}
We will show that the preimage of the $\Sreg$ under the quotient map is connected, which implies that $\Sreg$ itself is connected. The preimage can be seen as an open subset of $\mathbb{R}^{d\times (n-1)} $ obtained after removing the collision subset $C = \{q \in \mathbb{R}^{d\times (n-1)} : q_i = q_j \text{ for some } i \neq j\}$ and the rank $r$ stratum for $r\leq d-1$ that we denote by $\mathcal{S}_{r}$. We calculate the codimension of these removed subsets and use \cite[Lemma 4.5.9]{Wall2016DifferentialTopology} to find a path connecting any two points that avoids the removed subsets.

The collision subset is a union of subspaces of codimension $d$ where two fixed landmarks are equal, and $\mathcal{S}_r$ has codimension $(d-r)(n-1-r)$. We note that $(d-r)(n-1-r) - (n-d) = (n-r)(d-r-1) \geq 0$, so under our assumptions that $d\geq 2$ and $n \geq d+2$, these codimensions are all at least $2$. We now apply \cite[Lemma 4.5.9]{Wall2016DifferentialTopology} with $V=\mathbb{R}, M=\mathbb{R}^{d\times (n-1)}$ and $A_{\alpha}$ the removed subsets above. The cited result implies that the set of smooth maps from $\mathbb{R}$ to $\mathbb{R}^{d\times (n-1)}$ avoiding the removed subsets is residual in the space of smooth maps, i.e., a countable intersection of dense open sets. Since the space of smooth maps is Baire, i.e., a residual set is a dense set \cite[p.~104]{Wall2016DifferentialTopology}, maps avoiding these subsets are dense. 

Now, between two points $p,q$ that belong to the preimage of $\Sreg$, take the straight line $\gamma(t)=(1-t)p + tq ,t\in \mathbb{R}$. The evaluation map $\mathrm{ev}_t:\gamma \mapsto \gamma(t)$ at $t=0,1$ is continuous under the $C^{\infty}$ topology, so the set of smooth curves $\gamma$ with $d(\gamma(0),p)$ and $d(\gamma(1),q)$ sufficiently small is open in the $C^{\infty}$ topology, so the density argument above shows that there is an intersection with this open set, i.e., there exists a smooth curve connecting $p'$ and $q'$, each arbitrarily close to $p$ and $q$ respectively, that avoids the removed subsets. Since the preimage is open, small enough balls around $p,q$ are inside the preimage, so we can connect $p'$ to $p$, and $q'$ to $q$ by a straight line to complete the path connecting $p$ and $q$ within the preimage. This shows that the preimage is connected, and hence $\Sreg$ is connected.
\end{proof}

\section{Main assumptions and common lemmas}
\label{sec:main-assumptions-lemmas}
In this section, we introduce the assumptions on the metric that enable the completeness proof and we prove common results that will be used throughout the paper. We prove in Section~\ref{sec:metrics-assumptions} that our main assumption, Assumption~\ref{assumption:main_metric}, is satisfied in the scalar kernel case for a wide range of scalar kernels as well as for the conditionally positive matrix-valued kernel $k_s$.

We start by defining quantities describing the two possible failure modes of a landmark configuration: The explosion of the configuration, where landmarks move infinitely far apart, and the collision of landmarks, where two or more landmarks coincide. The explosion of a configuration can be avoided by controlling the following quantity:
\begin{equation*}
    r_0([q]) = \left(\sum_{i=1}^{n}\|q_i\|^2\right)^{1/2}.
\end{equation*}
On the other hand, the collision can be controlled by the following quantity:
\begin{equation*}
    \delta([q]) = \frac{1}{2}\min_{1\leq i<j\leq n} \|q_i - q_j\|.
\end{equation*}
The following assumptions imply that away from these failure modes, the metric is comparable to the quotient Euclidean metric, and that the speed of collision between landmarks is controlled in a specific manner.
\begin{assumption}
    \label{assumption:main_metric}
    Suppose there are constants $B_1,B_2>0$, $B_3\geq 0$, and a threshold $r_*>0$ such that, for every $[q]\in \Sreg$ and every quotient covector $\alpha$, we have the following.
    \begin{enumerate}
        \item (\textbf{Dimension.}) We assume $d \geq 2$.
        \item (\textbf{Number of landmarks.}) We assume $n> d+2$, that is, we have sufficiently many landmarks to avoid the degenerate case where low-rank configurations is invisible to the diffusion process (i.e., capacity zero).
        \item (\textbf{Uniform comparison under $r_0$-$\delta$ control.}) For every quotient covector $\alpha$, \begin{equation*}
            B_1 (1 \land \delta([q]))^{B_3} h_{[q]}^{-1}(\alpha, \alpha) \leq g_{[q]}^{-1}(\alpha, \alpha) \leq B_2(1 \lor r_0([q]))^2 h_{[q]}^{-1}(\alpha, \alpha),
        \end{equation*}
        where $h$ is the Riemannian metric on $\Sreg$ induced by the Euclidean metric on the landmark space, $a\land b = \min(a,b)$, and $a\lor b = \max(a,b)$.
        \item (\textbf{Collision speed control.}) For $\rho_{ij} = \|q_i-q_j\|$,  if we have $0 < \rho_{ij} <r_*$, then \begin{equation*}
            |\nabla_g \rho_{ij}|_{g} \leq B_2\rho_{ij} \sqrt{\log\left(\frac{er_*}{\rho_{ij}}\right)}.
        \end{equation*}
    \end{enumerate}
\end{assumption}
Assumption~\ref{assumption:main_metric} will be our standing assumption throughout.
Point (3) of the assumption allows us to perform calculation in Euclidean metrics where classical tools are available. On the other hand, Point (4) can be used to put a lower bound on $\rho_{ij}$, and thus $\delta$. This can be used with (3) to establish comparability between the given metric and the quotient Euclidean metric. Our first result in this direction is the following lemma, which shows that we can control $r_0$ and $\delta$ within metric balls.
\begin{lemma}
    \label{lemma:bounds_on_metric_balls}
    Fix $[q_0]\in \Sreg$ and let $B_g([q_0],R)$ denote the metric ball of radius $R$ centered at $[q_0]$. There is a constant $C_1\geq 1$, depending additionally on $[q_0]$ but independent of $R$ and $[q]$, such that, for every $R\geq 1$ and every $[q]\in B_g([q_0],R)$, we have
    \begin{equation*}
        r_0([q]) \leq C_1e^{C_1R} \quad \text{and} \quad \delta([q]) \geq C_1^{-1}e^{-C_1R^2}.
    \end{equation*}
\end{lemma}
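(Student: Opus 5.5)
We bound the growth of $r_0$ and the decay of $\delta$ along an arbitrary curve. Fix $[q]\in B_g([q_0],R)$ and choose a piecewise $C^1$ curve $\gamma\colon[0,1]\to\Sreg$ from $[q_0]$ to $[q]$ of $g$-length $L<R+1\le 2R$, using $R\geq 1$. For any function $f$ on $\Sreg$ that is differentiable where it is used, we have $|\tfrac{\db}{\db t} f(\gamma(t))|\le |\nabla_g f|_g\,|\dot\gamma|_g$. The proof therefore reduces to pointwise bounds on $|\nabla_g r_0|_g$ and $|\nabla_g \rho_{ij}|_g$, followed by Grönwall-type integrations along $\gamma$.

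\textbf{Step 1: the bound on $r_0$.} The function $r_0$ is $SO(d)$-invariant, so it descends to $\Sreg$, and $\mathrm{d}r_0$ is a quotient covector. Its Euclidean quotient norm satisfies $|\mathrm{d}r_0|_{h^{-1}}\le 1$. Indeed, $r_0$ is the Euclidean norm of the centered configuration, and restricting to the annihilator only lowers the norm; in fact $\mathrm{d}r_0=q/r_0$ already annihilates rotations. By the upper bound in Assumption~\ref{assumption:main_metric}(3),
\begin{equation*}
|\nabla_g r_0|_g^2=g^{-1}(\mathrm{d}r_0,\mathrm{d}r_0)\le B_2(1\lor r_0)^2 .
\end{equation*}
Hence $u=\log(1\lor r_0)$ is Lipschitz with $|\tfrac{\db}{\db t}u(\gamma(t))|\le \sqrt{B_2}\,|\dot\gamma|_g$ almost everywhere. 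Integrating along $\gamma$ gives $1\lor r_0([q])\le (1\lor r_0([q_0]))\,e^{2\sqrt{B_2}R}$. This yields the first bound once $C_1$ is chosen larger than $2\sqrt{B_2}$ and $1\lor r_0([q_0])$.

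\textbf{Step 2: the bound on $\delta$.} This is the main obstacle, because the collision control in Assumption~\ref{assumption:main_metric}(4) only holds for $\rho_{ij}<r_*$. Each $\rho_{ij}$ is rotation invariant and smooth on $\Sreg$. Set
\begin{equation*}
w=\sqrt{\log(er_*/\rho_{ij})},
\end{equation*}
which is defined where $\rho_{ij}<r_*$, so that $w\geq 1$ there. Then
\begin{equation*}
|\nabla_g w|_g=\frac{|\nabla_g\rho_{ij}|_g}{2w\rho_{ij}}\le \frac{B_2}{2}.
\end{equation*}
Along $\gamma$, consider the last time $t_0$ before $t$ at which $\rho_{ij}(\gamma(t_0))\ge \min(r_*,\rho_{ij}([q_0]))$; if no such time exists, take $t_0=0$. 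On $[t_0,t]$ we have $\rho_{ij}<r_*$, so $w(\gamma(t))\le w(\gamma(t_0))+B_2R$. This gives
\begin{equation*}
\log\frac{er_*}{\rho_{ij}([q])}\le \bigl(w_0+B_2R\bigr)^2\le 2w_0^2+2B_2^2R^2,
\end{equation*}
where $w_0=\sqrt{\log(er_*/\min(r_*,\rho_{ij}([q_0])))}$ depends only on $[q_0]$. Hence $\rho_{ij}([q])\ge c\,e^{-2B_2^2R^2}$ with $c>0$ depending on $[q_0]$. This also covers the case $\rho_{ij}([q])\ge r_*$. Taking the minimum over pairs gives $\delta([q])\ge C_1^{-1}e^{-C_1R^2}$ after enlarging $C_1$. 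The key point is that the $\sqrt{\log}$ factor in (4) produces the Gaussian-type $R^2$ exponent, instead of a double exponential.

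Finally, we must check that only the upper bound of (3) and the bound of (4) are used. This is the case, so $C_1$ depends on $B_2$, $r_*$, $r_0([q_0])$ and $\delta([q_0])$, but not on $R$ or $[q]$.
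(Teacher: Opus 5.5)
Your proposal is correct and follows essentially the same route as the paper: Riemannian Cauchy--Schwarz along a curve of length less than $R+1\le 2R$, a logarithmic reparametrization of $r_0$, and the substitution $\sqrt{\log(er_*/\rho_{ij})}$ combined with $(a+b)^2\le 2a^2+2b^2$ to obtain the $R^2$ exponent. The differences are cosmetic: you use $\log(1\lor r_0)$ where the paper uses $\log(1+r_0)$, and you handle the threshold $r_*$ with a last-exit-time argument, whereas the paper extends $\phi$ by the constant $1$ on $\{\rho_{ij}\ge r_*\}$ and integrates that globally defined function directly.
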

\begin{proof}
    For $f=r_0$ and $f =\rho_{ij}$, we use the cometric bound by integrating $f$ on a curve. In particular, we use the Riemannian Cauchy-Schwarz inequality
    \begin{equation}
        \label{eqn:riemannian_cauchy_schwartz}
        \left|\frac{\db}{\db t}f(\gamma(t))\right|^2 \leq g^{-1}(\db f, \db f)\|\dot{\gamma}(t)\|_g^2.
    \end{equation}
    To make the right hand side easier, we would want an upper bound on $g^{-1}(\db f, \db f)$ along the curve $\gamma(t)$. For this, we instead consider the transformation $F = \phi(f)$ for some suitable function $\phi$. By the chain rule, we have 
    \begin{equation*}
        g^{-1}(\db F,\db F) \leq (\phi'(f))^2 g^{-1}(\db f, \db f).
    \end{equation*}
    Therefore, we want $(\phi'(f))^2 g^{-1}(\db f, \db f)$ to be bounded. For $f=r_0$, we have
    \begin{align}
    \label{eqn:cometric_bound_r0}
    g^{-1}(\db r_0,\db r_0) &\leq B_2(1 \lor r_0)^2 h^{-1}(\db r_0,\db r_0) \\ \nonumber &= B_2(1 \lor r_0)^2 |\nabla r_0|_h^2 = B_2(1 \lor r_0)^2.
    \end{align}
    So, we want $(\phi'(r_0))^2$ to be of the order of $(1 \lor r_0)^{-2}$ to make the product bounded. The function $1 \lor r_0$ tends to $1$ as $r_0 \to 0$ and is of the order of $r_0$ as $r_0 \to \infty$, which has the smooth replacement $1+r_0$. So we can take $\phi'(r_0) = \frac{1}{1+r_0}$, meaning $\phi(r_0) = \log{(1+r_0)}$. With this choice, $(\phi'(r_0))^2 g^{-1}(\db r_0, \db r_0)$ is bounded since $(1\lor r_0)/(1+r_0) \leq 1$. Thus, integrating $\frac{d}{dt}\phi(r_0(\gamma(t)))$ along a curve from $[q_0]$ to $[q]$ of length less than $1+R$ shows that, with the combination of the Riemannian Cauchy-Schwarz inequality~\eqref{eqn:riemannian_cauchy_schwartz} and the cometric bound \eqref{eqn:cometric_bound_r0},
    \begin{equation*}
        \log(1+r_0([q])) - \log(1+r_0([q_0])) \leq \sqrt{B_2}(R+1),
    \end{equation*}
    which implies
    \begin{equation*}
        r_0([q]) \leq (1+r_0([q_0]))e^{\sqrt{B_2}(R+1)} - 1.
    \end{equation*}
    We note that the base point $q_0$ is fixed here and in the downstream use of the lemma, so the factor $(1+r_0([q_0]))$ can be seen as a constant.

    Now for $f=\rho_{ij}$, we use a similar argument. By the collision speed control assumption from Assumption \ref{assumption:main_metric}, we have $g^{-1}(\db \rho_{ij},\db \rho_{ij}) \leq B_2^2\rho_{ij}^2 \log\left(\frac{er_*}{\rho_{ij}}\right)$ whenever $0< \rho_{ij}<r_*$. Therefore, $\phi'(\rho_{ij})$ must be of order $\frac{1}{\rho_{ij}\sqrt{\log\left(\frac{er_*}{\rho_{ij}}\right)}}$ to make the product bounded. This function has an elementary antiderivative of $-2\sqrt{\log\left(\frac{er_*}{\rho_{ij}}\right)}$, so we should take
    \begin{equation*}
        \phi(\rho_{ij}) = \begin{cases}
        \sqrt{\log\left(\frac{er_*}{\rho_{ij}}\right)}, & 0< \rho_{ij}<r_*, \\
        1, & \rho_{ij} \geq r_*.
        \end{cases}
    \end{equation*}
    Now we integrate $\phi(\rho_{ij})$ along a curve from $[q_0]$ to $[q]$ of length less than $1+R$ to obtain an estimate similar to the one for $r_0$:
\begin{equation*}
    \phi(\rho_{ij}([q])) - \phi(\rho_{ij}([q_0])) \leq \frac{B_2}{2}(R+1),
\end{equation*}
which implies
\begin{equation*}
    \rho_{ij}([q]) \geq er_* \exp\left(-(\phi(\rho_{ij}([q_0])) + \frac{B_2}{2}(R+1))^2\right).
\end{equation*}
Set $C_2=\max_{i<j}\phi(\rho_{ij}([q_0]))$. For $R\geq1$, we can replace $R+1$ by $R+R = 2R$ in the exponent, so halving and taking the minimum of $\rho_{ij}$ gives
\begin{equation*}
    \delta([q]) \geq \frac{er_*}{2}e^{-2C_2^2}e^{-2B_2^2R^2}.
\end{equation*}
Both claimed bounds now follow by choosing once
\begin{equation*}
    C_1=\max\left\{1,\sqrt{B_2},(1+r_0([q_0]))e^{\sqrt{B_2}},
    2B_2^2,\frac{2e^{2C_2^2}}{er_*}\right\}.
\end{equation*}
\end{proof}
We will now use the estimate to show a comparison result on the $g$-metric balls.
\begin{corollary}
\label{cor:comparison_g_metric_balls}
Fix $[q_0]\in\Sreg$. There are constants $D_1,D_2\geq1$, depending additionally on $[q_0]$ but independent of $R$ and $[q]$, such that, for every $R\geq1$, on $B_g([q_0],R)$,
\begin{equation}\label{eq:comparison}
    D_1^{-1}e^{-D_1R^2}h^{-1}\leq g^{-1} \leq D_1e^{D_1R}h^{-1}.
\end{equation}
On the same ball, the volume densities satisfy
\begin{equation*}
    \frac{\db\mu_g}{\db\mu_h} \leq D_2e^{D_2R^2},
\end{equation*}
where $\mu_g$ and $\mu_h$ are the volume measure associated with the metrics $g$ and $h$, respectively.
\end{corollary}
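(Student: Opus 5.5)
The plan is to insert the bounds of Lemma~\ref{lemma:bounds_on_metric_balls} directly into the two-sided comparison of Assumption~\ref{assumption:main_metric}(3). The volume density estimate then comes from comparing determinants.

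\textbf{Cometric comparison.} Fix $R\geq 1$ and $[q]\in B_g([q_0],R)$. For the upper bound, Lemma~\ref{lemma:bounds_on_metric_balls} gives $r_0([q])\leq C_1e^{C_1R}$ with $C_1\geq1$. Hence $(1\lor r_0([q]))^2\leq C_1^2e^{2C_1R}$, and Assumption~\ref{assumption:main_metric}(3) yields
\[
g^{-1}\leq B_2C_1^2e^{2C_1R}h^{-1}.
\]
For the lower bound, Lemma~\ref{lemma:bounds_on_metric_balls} gives $\delta([q])\geq C_1^{-1}e^{-C_1R^2}$, and this quantity is at most $1$. Therefore $(1\land\delta([q]))^{B_3}\geq C_1^{-B_3}e^{-B_3C_1R^2}$, and
\[
g^{-1}\geq B_1C_1^{-B_3}e^{-B_3C_1R^2}h^{-1}.
\]
This also covers $B_3=0$. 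Choosing
\[
D_1\geq\max\{1,\;B_2C_1^2,\;2C_1,\;B_1^{-1}C_1^{B_3},\;B_3C_1\}
\]
gives \eqref{eq:comparison}.

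\textbf{Volume densities.} Both metrics live on the same manifold $\Sreg$ of dimension $m=nd-d-\tfrac{d(d-1)}{2}$. I work in an $h$-orthonormal basis of $T^*_{[q]}\Sreg$. In this basis the cometric $g^{-1}$ is a symmetric positive definite matrix $G^{-1}$, and $h^{-1}$ is the identity. The density is
\[
\frac{\db\mu_g}{\db\mu_h}=\sqrt{\det G}=(\det G^{-1})^{-1/2}.
\]
The lower bound in \eqref{eq:comparison} says that every eigenvalue of $G^{-1}$ is at least $D_1^{-1}e^{-D_1R^2}$. Therefore
\[
\frac{\db\mu_g}{\db\mu_h}\leq\bigl(D_1e^{D_1R^2}\bigr)^{m/2},
\]
and we may take $D_2=\max\{D_1^{m/2},\,mD_1/2\}$. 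Only the lower comparison bound is needed here, which is why the exponent is quadratic in $R$.

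\textbf{Main obstacle.} The argument is mostly bookkeeping. The one point needing care is the passage from the quadratic-form inequality on quotient covectors to the determinant. Assumption~\ref{assumption:main_metric}(3) is stated for all quotient covectors, and the cometric on $\Sreg$ is the restriction to $T^*_{[q]}\Sreg$. So both $g^{-1}$ and $h^{-1}$ are nondegenerate forms on the same $m$-dimensional space, and the eigenvalue comparison is legitimate. One must also keep the constants independent of $R$ and $[q]$; this holds because $C_1$ depends only on $[q_0]$ and the fixed data.
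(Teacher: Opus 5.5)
Your proposal is correct and follows the paper's proof essentially line by line. You substitute the bounds of Lemma~\ref{lemma:bounds_on_metric_balls} into Assumption~\ref{assumption:main_metric}(3) with the same choice of $D_1$, and you get the density bound from the lower cometric comparison alone via the determinant. Omitting the $1$ from the maximum defining $D_2$ is harmless, since $D_1^{m/2}\geq 1$.
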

\begin{proof}
    We will use Lemma~\ref{lemma:bounds_on_metric_balls}. Since $C_1\geq1$, Assumption~\ref{assumption:main_metric} gives
    \begin{equation*}
        g^{-1} \leq B_2(1\lor r_0)^2h^{-1}
        \leq B_2C_1^2e^{2C_1R}h^{-1}.
    \end{equation*} 
    Similarly, $C_1^{-1}e^{-C_1R^2}\leq1$, so the lower comparison gives
    \begin{equation*}
        g^{-1} \geq B_1(1\land\delta)^{B_3}h^{-1}
        \geq B_1C_1^{-B_3}e^{-B_3C_1R^2}h^{-1}.
    \end{equation*}
    Thus \eqref{eq:comparison} holds with the fixed choice
    \begin{equation*}
        D_1=\max\{1,B_2C_1^2,2C_1,B_1^{-1}C_1^{B_3},B_3C_1\}.
    \end{equation*}

    We will next show the claimed comparison result for the volume elements. From the lower bound in \eqref{eq:comparison}, we obtain
    \begin{equation*}
        g \leq D_1e^{D_1R^2}h.
    \end{equation*}
    Since the volume form is the square root of the determinant of the metric, we deduce
    \begin{equation*}
       \db\mu_g\leq D_1^{D/2}e^{(D/2)D_1R^2}\db\mu_h,
    \end{equation*}
    where $D=d(n-1)-d(d-1)/2$ is the dimension of $\Sreg$. The density estimate then follows with the fixed choice $D_2=\max\{1,D_1^{D/2},(D/2)D_1\}$.
\end{proof}

We will now prove a characterization of the metric boundary, where we show that the boundary is contained in the set of points where the smallest singular value of the landmark configuration matrix is zero. We denote by $\sigma_{k}([q])$ the $k$-th singular value of the matrix $[q_1,\dots,q_n]$. We remark that all of these quantities are well-defined on the quotient $\Sreg$ since they are invariant under the action of $SO(d)$. With this in mind, we prove the following.
\begin{proposition}
    \label{prop:boundary_characterization}
The functions $r_0, \delta$, and $\sigma_{d}$ on $\Sreg$ all extend continuously to its completion. Moreover, any point $[q]\in \partial \Sreg$ satisfies $r_0([q])< \infty$, $\delta([q])>0$ and $\sigma_{d}([q]) = 0$, where $r_0, \delta$ and $\sigma_d$ denote the continuous extensions.
\end{proposition}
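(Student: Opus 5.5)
The plan is to show that $r_0$, $\delta$ and $\sigma_d$ are locally Lipschitz with respect to $d_g$ on $\Sreg$, with constants that stay bounded on every metric ball. Such functions extend uniquely and continuously to $\overline{\Sreg}$. Every point of the metric boundary is a limit of a $d_g$-Cauchy sequence, which lies in some ball $B_g([q_0],R)$. The bounds of Lemma~\ref{lemma:bounds_on_metric_balls} therefore pass to the limit and give $r_0\le C_1e^{C_1R}<\infty$ and $\delta\ge C_1^{-1}e^{-C_1R^2}>0$ at the boundary point.

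\textbf{Lipschitz estimates.} I use the quotient Euclidean distance $d_h$. All three functions are $SO(d)$-invariant and $1$-Lipschitz on $\Landz$: $r_0$ is a norm, $\rho_{ij}$ is a norm of a difference, and $\sigma_d$ is $1$-Lipschitz by Weyl's inequality for singular values. Hence each is $1$-Lipschitz for $d_h$ on the quotient, via horizontal lifts or by minimizing over the rotation representative. Now fix a ball $B_g([q_0],R)$ with $R\ge1$, and take two points $[p],[q]$ in the smaller ball $B_g([q_0],R/2)$. Any curve of $g$-length less than $R/2$ joining them stays inside $B_g([q_0],R)$. On that ball, Corollary~\ref{cor:comparison_g_metric_balls} gives $h\le D_1e^{D_1R}g$, since the upper cometric bound dualizes this way. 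Therefore the $h$-length of such a curve is at most $\sqrt{D_1}e^{D_1R/2}$ times its $g$-length. Consequently
\[
|f([p])-f([q])|\le \sqrt{D_1}e^{D_1R/2}\,d_g([p],[q])
\]
for each $f\in\{r_0,\delta,\sigma_d\}$. A Cauchy sequence is eventually contained in some $B_g([q_0],R/2)$, so $f$ of the sequence is Cauchy and the extension is well defined. Continuity of the extension on $\overline{\Sreg}$ follows from the same uniform Lipschitz bound, applied to limits on small balls.

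\textbf{Showing $\sigma_d=0$ on $\partial\Sreg$.} This is the main step. Take $[q]\in\partial\Sreg$ and a Cauchy sequence $[q^k]\to[q]$. Lift the points to representatives $q^k\in\Landz$. They satisfy $r_0\le C$, so by compactness a subsequence converges to some $q^\infty\in(\R^d)^n$. This limit still has $\sum_i q_i=0$, has distinct landmarks because $\delta\ge c>0$, and has $\sigma_d(q^\infty)=\lim\sigma_d(q^k)$.

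Suppose for contradiction that $\sigma_d(q^\infty)>0$. Then $[q^\infty]\in\Sreg$. Near $[q^\infty]$, the manifold topology of $\Sreg$ coincides with the $d_g$ topology, since $g$ is a smooth Riemannian metric there. Convergence $q^k\to q^\infty$ in $\Landz$ gives $[q^k]\to[q^\infty]$ in the manifold topology, hence also in $d_g$. So $[q]=[q^\infty]\in\Sreg$, contradicting $[q]\in\partial\Sreg=\overline{\Sreg}\setminus\Sreg$.

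The subtle point I expect to need care is that two Cauchy sequences with the same $d_g$-limit must give the same extended values. This is exactly what the uniform Lipschitz bound guarantees. A second delicate point is that manifold convergence implies $d_g$ convergence; this is the standard fact that the Riemannian distance induces the manifold topology. With these two facts in place, $\sigma_d([q])=0$ follows.
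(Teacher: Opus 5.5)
Your proposal is correct and follows essentially the same route as the paper: transfer the $d_h$-Lipschitz property of $r_0,\delta,\sigma_d$ to $d_g$ on metric balls via Corollary~\ref{cor:comparison_g_metric_balls} to get Cauchy continuity, use the bounds of Lemma~\ref{lemma:bounds_on_metric_balls} for $r_0$ and $\delta$ at boundary points, and use compactness plus contradiction for $\sigma_d$. The differences are cosmetic: your curve argument as written only covers pairs with $d_g([p],[q])<R/2$, which is all Cauchy continuity needs (the paper likewise restricts to $d_g<1$); and where you invoke the general fact that $d_g$ induces the manifold topology near $[q^\infty]$, the paper verifies this directly with a straight-line path and the lower cometric bound in Assumption~\ref{assumption:main_metric}.
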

\begin{proof}
    We first show that $r_0, \delta$ and $\sigma_d$ extend continuously with respect to the metric of interest. For this, we show that all these functions are uniformly continuous on any bounded set, which implies that the functions are Cauchy continuous, i.e., Cauchy sequences map to Cauchy sequences. We can then use the fact that Cauchy continuous functions extend continuously to the completion \cite[Theorem 19.27]{Schechter1997}.

    Any bounded set on $\Sreg$ is contained in a metric ball $B_g([q_0],R)$ for some $R\geq1$, so it suffices to show uniform continuity on such metric balls. The crucial property of $r_0,\delta$ and $\sigma_d$ is that they are all Lipschitz continuous with respect to the Euclidean distance on the landmark configuration matrices, which implies uniform continuity. Keep the constant $D_1$ from Corollary~\ref{cor:comparison_g_metric_balls} for the fixed center $[q_0]$. On $B_g([q_0],R)$, the corollary gives $h\leq D_1e^{D_1R}g$. We will now convert this into an inequality of distances.
    
    For any points in the metric ball $[x], [y] \in B_g([q_0], R)$ close enough, e.g., $d_g([x], [y]) < 1$, take $\epsilon>0$ so that $[x], [y]\in B_g([q_0], R-\epsilon)$, e.g, $\epsilon = (R - \max\{d_g([q_0], [x]),  d_g([q_0], [y]) \})/2$. Now, we can take a curve $\gamma_\epsilon$ connecting $[x]$ and $[y]$ with length at most $1 + \epsilon$. This curve stays in $B_g([q_0], R + 1)$. In fact, any points on this curve are at most a distance $(1+\epsilon)/2$ from either $[x]$ or $[y]$, so the triangle inequality shows $d_g(\gamma(t), [q_0]) \leq R - \epsilon + \frac{1+\epsilon}{2} = R - \frac{\epsilon}{2} + \frac{1}{2} < R+1$. Therefore, the curve $\gamma_\epsilon$ remains entirely within the metric ball $B_g([q_0], 1+R)$, and we can use the comparison to show that 
    \begin{equation*}
         d_h([x],[y])\leq L_h(\gamma_\epsilon)
         \leq\sqrt{D_1}e^{D_1(R+1)/2}L_g(\gamma_\epsilon).
    \end{equation*}
    Define $E_1(L)=\sqrt{D_1}e^{D_1L/2}$ for $L\geq1$. This constant $E_1$ depends on the actual comparison radius $L$ and the fixed center, but is independent of the endpoints and the curves. Taking the infimum over all such curves $\gamma_\epsilon$ connecting $[x]$ and $[y]$, we obtain
    \begin{equation*}
         d_h([x],[y])\leq E_1(R+1)d_g([x],[y]).
    \end{equation*}
    Since $r_0, \delta$ and $\sigma_d$ are Lipschitz continuous with respect to $d_h$, that translates to $d_g$ too as long as the distance $d_g([x], [y])$ is sufficiently small. This is enough to prove the uniform continuity, and hence the Cauchy continuity, of these functions on any bounded set.

    We will now show the characterization of the boundary. For any $[q]\in \partial \Sreg$, we have a Cauchy sequence $[q_n]\in \Sreg$ converging to $[q]$ in the completion. Since any Cauchy sequence is bounded, $[q_n]$ lies in some metric ball $B_g([q_0], R)$ for sufficiently large $R$. By Lemma \ref{lemma:bounds_on_metric_balls}, $r_0$ and $\delta$ are both bounded from above and below on this metric ball, respectively, so the continuity shows that $r_0([q]), \delta([q])$ are finite and positive, respectively. 

    For the singular value, for the sake of contradiction let us assume $\sigma_d([q]) > 0$. We first realize the abstract completion point $[q]\in \partial \Sreg$ as a concrete configuration. Take a representation $q_n$ of the Cauchy sequence $[q_n]$ in $\Sreg$ converging to $[q]$. Since $r_0$ is bounded uniformly, the sequence $q_n$ is also bounded in the Euclidean (Frobenius) norm. By the compactness of the closed and bounded sets in the Euclidean space, there exists a subsequence $q_{n_k}$ that converges to some matrix $\bar{q}$ in the Euclidean norm. We will show that $[\bar{q}] = [q]$ in the completion. The continuity argument gives $\delta(\bar{q})>0$ and $\sigma_d(\bar{q})>0$, so $\bar{q}$ is collision-free and of full rank, i.e., $[\bar{q}]\in\Sreg$. Set $E_2=\delta(\bar{q})/2$ and take a small enough Euclidean ball around $\bar{q}$ so that $\delta>E_2$ and $\sigma_d>0$ throughout the ball. Assumption~\ref{assumption:main_metric} gives $g\leq B_1^{-1}(1\land E_2)^{-B_3}h$ there. Define $E_3=B_1^{-1/2}(1\land E_2)^{-B_3/2}$, where constants depend on the limiting configuration but not on the subsequence index. A straight line path $\gamma$ between $q_{n_k}$ and $\bar{q}$ stays in the ball for large $k$, so
    \begin{equation*}
        d_g([q_{n_k}],[\bar{q}])\leq L_g(\gamma)
        \leq E_3L_h(\gamma)\leq E_3\|q_{n_k}-\bar{q}\|_F.
    \end{equation*}
    Taking the limit as $k\to \infty$, we obtain
    \begin{equation*}
        d_g([q_{n_k}], [\bar{q}]) \to 0,
    \end{equation*}
    which shows that $[\bar{q}] = [q]$ in the completion, as desired. This contradicts the assumption $[q]\in \partial \Sreg$, hence we must have $\sigma_d([q]) = 0$.    
\end{proof}

\section{Stochastic completeness}
\label{sec:stoch-completeness-proof}
In this section, we will combine the results from the previous sections to prove the main theorem of stochastic completeness of the regular stratum $\Sreg$ of the quotient $\Landz/SO(d)$ with respect to the cometrics $g^{-1}$ satisfying Assumption~\ref{assumption:main_metric}.
\begin{theorem}
    Under Assumption~\ref{assumption:main_metric}, the manifold $(\Sreg,g)$ is stochastically complete.
    \label{thm:stoch-complete}
\end{theorem}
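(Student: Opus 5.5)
We will apply Masamune's criterion, Theorem~\ref{thm:masamune}, to $M=\Sreg$. Since $\Sreg$ is a smooth connected manifold without boundary (connectedness holds because Assumption~\ref{assumption:main_metric} gives $d\geq 2$ and $n>d+2$), it remains to verify two things: the volume growth condition and the almost polarity of $\partial\Sreg$.

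\textbf{Volume growth.} For the volume growth we fix $[q_0]$ and $R\geq 1$. By Lemma~\ref{lemma:bounds_on_metric_balls}, the ball $B_g([q_0],R)$ lies in the set $\{r_0\leq C_1e^{C_1R}\}$. This set has Euclidean quotient volume at most $c\,(C_1e^{C_1R})^{D}$, since it is the image of a Frobenius ball in $\R^{d(n-1)}$ and the quotient volume is bounded by a Euclidean volume divided by the fiber volume, which is controlled by $\sigma_d$ from below. To avoid fiber issues, we would instead integrate in $\Landz$ with the submersion formula. Combining this with the density bound in Corollary~\ref{cor:comparison_g_metric_balls} gives
\[
v(R)\leq C e^{CR^2}.
\]
Hence $\log v(r)\leq C' r^2$, and $\int^\infty \frac{r}{\log v(r)}\,\mathrm{d}r\geq \int^\infty \frac{1}{C' r}\,\mathrm{d}r=\infty$. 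The fiber-volume point is a technicality. The honest way to handle it is to note that $\mu_h$ is the pushforward of Lebesgue measure on the full-rank locus divided by the orbit volume, and the orbit volume is $\leq C r_0^{d(d-1)/2}$ from above. We only need an upper bound on $\mu_h$, though, which needs a lower bound on orbit volume. Alternatively, we would bound $\mu_h(B)$ via the coarea formula or by the fact that the orbit volume is bounded below by $c\prod\sigma$-terms, which may degenerate. Polynomial degeneration in $\sigma_d$ integrated over a bounded region should still be finite, because codimension is large.

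\textbf{Almost polarity.} By Proposition~\ref{prop:boundary_characterization}, $\partial\Sreg\subset\{\sigma_d=0\}$, and on it $r_0<\infty$ and $\delta>0$. Write $\partial\Sreg=\bigcup_{m}\Sigma_m$ with
\[
\Sigma_m=\partial\Sreg\cap\overline{B_g([q_0],m)}.
\]
By Lemma~\ref{lem:subadditivity}, it suffices to show $\mathrm{Cap}(\Sigma_m)=0$ for each $m$. Near $\Sigma_m$ we are inside $B_g([q_0],m+1)$, so $g$ and $h$ are uniformly comparable there by Corollary~\ref{cor:comparison_g_metric_balls}. Both the $W^{1,2}$ norms and the volumes therefore change only by constants depending on $m$, and it suffices to produce Euclidean cutoffs. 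We take
\[
f_\varepsilon=\psi_\varepsilon(\sigma_d)\,\chi,
\]
where $\chi$ is a fixed cutoff equal to $1$ near $\Sigma_m$ and supported in $B_g([q_0],m+1)$, obtained from a function of $d_g$. For $\psi_\varepsilon$ we use the logarithmic cutoff, equal to $1$ for $\sigma_d<\varepsilon^2$, equal to $0$ for $\sigma_d>\varepsilon$, and equal to $\log(\varepsilon/\sigma_d)/\log(1/\varepsilon)$ in between. Since $|\nabla_h\sigma_d|\leq 1$ almost everywhere, we get
\[
\|f_\varepsilon\|_{1,2}^2\lesssim \mu_h(\sigma_d<\varepsilon)+\frac{1}{\log^2\varepsilon}\int_{\varepsilon^2<\sigma_d<\varepsilon}\sigma_d^{-2}\,\mathrm{d}\mu_h .
\]
Hence we need the estimate
\[
\mu_h(\{\sigma_d<t\}\cap\text{bounded set})\lesssim t^{k}\quad\text{with }k\geq 2.
\]
This is the codimension count: the rank-$(d-1)$ stratum has codimension $(n-d)$ in configuration space, and $n-d>2$ by assumption. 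Pulling back to $\Landz$, the measure of $\{\sigma_d<t\}$ is $O(t^{n-d})$, while the quotient density $1/\mathrm{vol}(\text{orbit})$ blows up only like $\sigma_d^{-(d-1)}$. This yields an exponent of at least $n-2d+1$, which might be insufficient. The sharper route is to work directly in $\Sreg$ coordinates given by the Gram/SVD decomposition, where the normal directions to the stratum have dimension $n-1-(d-1)=n-d$ after quotienting; this codimension is the one matching the assumption $n>d+2$, that is, codimension at least $3>2$. With $k\geq 3$ the right-hand side above tends to $0$, so $\mathrm{Cap}(\Sigma_m)=0$.

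\textbf{Main obstacle.} The main obstacle is this measure estimate near the rank-deficient stratum in quotient coordinates. It requires computing $\mu_h$ in singular value coordinates, where the Jacobian contains factors $|\sigma_i^2-\sigma_j^2|$ and powers of $\sigma_i$, and checking that these factors give $\mu_h(\sigma_d<t)=O(t^{n-d})$ locally. We would attempt this first by writing $q=U\Sigma V^\top$ and using the standard Jacobian for real rectangular matrices modulo the left $SO(d)$ action.
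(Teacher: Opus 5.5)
Your overall architecture matches the paper's: Masamune's criterion, volume growth from Lemma~\ref{lemma:bounds_on_metric_balls} and Corollary~\ref{cor:comparison_g_metric_balls}, and almost polarity via cutoffs in $\sigma_d$ localized to bounded pieces of the boundary together with Lemma~\ref{lem:subadditivity}. The gap is that you never prove the one quantitative estimate both halves rest on:
\[
\mu_h\bigl(\{r_0<R,\ 0<\sigma_d<t\}\bigr)\leq G_1(R)\,t^{n-d}.
\]
You reduce the capacity bound to it and then label it the main obstacle. Your volume-growth step also tacitly needs its $t=R$ case, namely that $\mu_h\{r_0\le 1\}<\infty$, so that $\mu_h\{r_0\le R\}\propto R^D$ by scaling. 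You leave that as ``should still be finite''.

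A codimension count alone does not give the estimate. It is a local statement at smooth points of the rank-$(d-1)$ stratum. The estimate must hold uniformly on a bounded set whose closure contains configurations of rank $\le d-2$, where the $SO(d)$-action is not free and the local structure changes. The computation you do attempt also rests on a wrong picture of the fibres. $SO(d)$ acts freely on configurations of rank $\ge d-1$. The orbit volume at $q=H_1DP^\top$ is a constant times $\prod_{i<j}(\sigma_i^2+\sigma_j^2)^{1/2}$, which is the Gram determinant of the vertical vectors $U_{ij}$ in the paper. This stays bounded below as $\sigma_d\to0$ with $\sigma_1,\dots,\sigma_{d-1}$ fixed. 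So $1/\mathrm{vol}(\text{orbit})$ does not blow up like $\sigma_d^{-(d-1)}$, and your exponent $n-2d+1$ is an artifact. The orbit volume genuinely degenerates only near rank $\le d-2$.

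The paper closes this gap with an explicit computation.
\begin{itemize}
\item By the real SVD Jacobian (Theorem~\ref{thm:volume_density_prequotient}), the Lebesgue density is $\prod_i\sigma_i^{n-d-1}\prod_{i<j}(\sigma_i^2-\sigma_j^2)$ times Stiefel measures.
\item Dividing out the orbit factor gives the quotient density of Theorem~\ref{thm:volume_density_quotient}, $\prod_i\sigma_i^{n-d-1}\prod_{i<j}(\sigma_i^2-\sigma_j^2)(\sigma_i^2+\sigma_j^2)^{-1/2}$.
\item The factor $\prod_{i<j}(\sigma_i^2-\sigma_j^2)$ in the numerator absorbs the degeneration of the orbit volume near low rank. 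So on $\{r_0<R\}$ the density is at most $C(R)\sigma_d^{n-d-1}$.
\item Integrating gives the display above and its weighted form (Proposition~\ref{ref:singular_value_bound}), with $t=R$ covering the volume-growth step.
\end{itemize}

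With that in hand your argument goes through, and your choices then buy something. Your logarithmic cutoff needs only $n-d\ge2$, since the gradient term is bounded by $\log(1/\varepsilon)^{-2}\int_{\varepsilon^2}^{\varepsilon}s^{n-d-3}\,\mathrm{d}s\to0$. The paper's linear cutoff needs $n-d>2$. Your $d_g$-Lipschitz localizing cutoff also avoids the Haar-averaged Euclidean bump.

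Two smaller omissions remain:
\begin{itemize}
\item You must show $f_\varepsilon\in W_0^{1,2}(\Sreg)$. The paper does this with an inner cutoff $1-\psi_\tau(\sigma_d)$, using that $r_0$ and $\delta$ are controlled on the support.
\item The step from $\log v(r)\le C'r^2$ to divergence of $\int^\infty r/\log v(r)\,\mathrm{d}r$ requires $\log v(r)>0$. The finite-volume case should be handled through the alternative condition $\int^\infty r/v(r)\,\mathrm{d}r=\infty$, as the paper does.
\end{itemize}
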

The proof as presented in Section~\ref{sec:actual-proof} below relies on the following two facts, as required by Masamune's theorem.
\begin{enumerate}
    \item The metric boundary $\partial \Sreg$ is almost polar.
    \item The volume growth of the metric balls in $\Sreg$ satisfies the integral condition in Masamune's theorem.
\end{enumerate}
We establish these results below.

\subsection{Almost polar boundary}
To show that a set $S\subset \bar{\mathcal{S}}_{\mathrm{reg}}$ is almost polar, we need to construct a family of functions $u_\epsilon \in W_0^{1,2}(\Sreg)$ that are $1$ on a neighborhood of $S$ and have vanishing $W^{1,2}$ norm as $\epsilon \to 0$. Therefore, we need a tool to estimate the $W^{1,2}$ norm of a function, and in particular an integral on $\Sreg$. 
As discussed in Proposition~\ref{prop:boundary_characterization}, the boundary $\partial \Sreg$ is contained in the set of points where the smallest singular value vanishes. Therefore, $u_{\epsilon}$ for $\partial \Sreg$ should be a function of the smallest singular value, vanishing as the smallest singular value grows away from zero. Thus, it is natural to use the SVD coordinate to calculate the integral. We will refer to the approach by Díaz-García \cite{DiazGarcia2013More} for the SVD calculation.

We first consider the pre-quotient space $\Landz$ and SVD
coordinates on it. Let $q\in\Landz$, viewed as a $d\times n$
matrix. Choose an orthonormal basis of the hyperplane
\[
    \mathbf{1}^{\perp}
    = \left\{x\in\mathbb{R}^{n}:
        \sum_{i=1}^{n}x_i=0\right\},
\]
and let $U\in\mathbb{R}^{n\times(n-1)}$ have these basis
vectors as its columns. Here, $\mathbf{1}\in \mathbb{R}^n$ is the vector with all entries equal to $1$. Since $q\mathbf{1}=0$, we may
identify $q$ with the $d\times(n-1)$ matrix $qU$, again
denoted by $q$. We then consider its SVD. Díaz-García considers the so-called \emph{thin} SVD: for $k=\mathrm{rank}(q)$,  $q = H_1 D P^{\top}$ where $H_1\in V_{k, d}, D = \mathrm{diag}(\sigma_1(q), \dots, \sigma_k(q))$ and $P\in V_{k, n - 1}$. We then have the following result on the change of variables formula.
\begin{theorem}[Theorem 3.1 in \cite{DiazGarcia2013More}]
    \label{thm:volume_density_prequotient}
    Let $X$ be a $d\times (n-1)$ matrix of rank $k$ and $X = H_1 D P^{\top}$ be its SVD. Assume that the singular values of $X$ are distinct. Then the Euclidean volume density $\db X$ can be expressed as
    \begin{equation*}
        \db X = 2^{-k} \prod_{i=1}^{k} \sigma_i^{n+d-1-2k} \prod_{i<j} (\sigma_i^2 - \sigma_j^2) (\db D) \wedge (H_1^\top \db H_1) \wedge (P^\top \db P),
    \end{equation*}
    where $(\db D) = \bigwedge_{i=1}^{k} \db\sigma_i$, $(H_1^\top \db H_1)$ is the volume density on the Stiefel manifold $V_{k, d}$ and $P^\top \db P$ is the volume density on the Stiefel manifold $V_{k, n - 1}$.
\end{theorem}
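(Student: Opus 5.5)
The plan is to obtain the formula by differentiating the factorization $X=H_1DP^\top$ and computing the Jacobian of the map $(H_1,D,P)\mapsto X$ in a frame adapted to the SVD. In the application we need only the full-rank case $k=d\le n-1$, where $\db X$ is Lebesgue measure on $\mathbb{R}^{d\times(n-1)}$. For general $k$, $\db X$ is read as the Riemannian volume density on the submanifold of rank-$k$ matrices. The proof is local near any point with distinct positive singular values, ordered $\sigma_1>\dots>\sigma_k>0$, since the map is a local diffeomorphism there.

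First complete $H_1$ to an orthogonal matrix $H=[H_1\ H_2]\in O(d)$ and $P$ to $Q=[P\ P_2]\in O(n-1)$. Left and right multiplication by orthogonal matrices preserve the Euclidean volume form, so $(\db X)$ equals the volume form of $H^\top \db X\, Q$. Write $A=H_1^\top \db H_1$ and $B=P^\top \db P$; both are skew-symmetric $k\times k$ matrices of one-forms. Then
\[
H^\top \db X\, Q=\begin{pmatrix} A D+\db D-DB & D\,\db P^\top P_2\\ H_2^\top \db H_1\, D & 0\end{pmatrix}.
\]
The bottom-right block vanishes, which is consistent with the tangent space of the rank-$k$ locus. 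The Stiefel volume forms are, by definition,
\[
(H_1^\top\db H_1)=\bigwedge_{i<j}a_{ij}\wedge\bigwedge(H_2^\top \db H_1)
\quad\text{and}\quad
(P^\top\db P)=\bigwedge_{i<j}b_{ij}\wedge\bigwedge(P_2^\top \db P).
\]
So it remains to express the wedge of the nonzero entries of the block matrix in terms of these one-forms and $\db\sigma_i$.

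I would take the blocks in turn. The diagonal of the top-left block contributes exactly $\db\sigma_i$, because $A$ and $B$ have zero diagonal. For each pair $i<j$, the entries $(i,j)$ and $(j,i)$ of the top-left block are
\[
a_{ij}\sigma_j-\sigma_ib_{ij}
\quad\text{and}\quad
-a_{ij}\sigma_i+\sigma_jb_{ij}.
\]
This is a $2\times2$ linear map in $(a_{ij},b_{ij})$ with determinant $\pm(\sigma_i^2-\sigma_j^2)$, which produces $\prod_{i<j}(\sigma_i^2-\sigma_j^2)$. The bottom-left block $H_2^\top\db H_1\,D$ scales column $i$ by $\sigma_i$ and contributes $\prod_i\sigma_i^{d-k}$. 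The top-right block $D\,\db P^\top P_2$ scales row $i$ by $\sigma_i$ and contributes $\prod_i\sigma_i^{n-1-k}$. Multiplying gives the exponent $(d-k)+(n-1-k)=n+d-1-2k$, as claimed.

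Finally, the factor $2^{-k}$ accounts for non-uniqueness of the SVD. Simultaneously flipping the signs of the $i$-th columns of $H_1$ and $P$ leaves $X$ unchanged, so the parametrization by $V_{k,d}\times\{\sigma_1>\dots>\sigma_k>0\}\times V_{k,n-1}$ is $2^k$-to-one. Integrating over the full Stiefel manifolds therefore overcounts by $2^k$.

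I expect the main obstacle to be bookkeeping rather than any conceptual point. Orientation signs must be handled consistently by working with densities, i.e.\ absolute values. One must also make sure that the one-forms in the Stiefel volume forms match exactly the independent entries appearing above, namely the strictly upper-triangular parts of $A$ and $B$ together with $H_2^\top\db H_1$ and $P_2^\top\db P$. Lastly, the normalization convention for the Stiefel measures has to be the one that yields $2^{-k}$ rather than a different constant.
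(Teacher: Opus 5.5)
Your proposal is correct. The paper gives no proof of its own and only cites \cite{DiazGarcia2013More}; your computation is the standard exterior-product Jacobian argument behind that result. It rotates into the frame $H^\top \db X\,Q$, reads $\db\sigma_i$ off the diagonal, pairs $(a_{ij},b_{ij})$ into $2\times 2$ blocks of determinant $\pm(\sigma_i^2-\sigma_j^2)$, and collects the row and column scalings $\sigma_i^{n-1-k}$ and $\sigma_i^{d-k}$.

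You read $2^{-k}$ as the multiplicity of a $2^k$-to-one parametrization over the full Stiefel manifolds, that is, as a global integration constant rather than part of the local form. This is exactly how the paper reads it when it drops $2^{-d}$ in the proof of Theorem~\ref{thm:volume_density_quotient}.

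One small point is worth adding. The map is \emph{exactly} $2^k$-to-one, not merely at least, because distinct positive singular values make each column $h_i$ of $H_1$ unique up to sign, as an eigenvector of $XX^\top$ for the simple eigenvalue $\sigma_i^2$. The corresponding column of $P$ is then forced to be $p_i = X^\top h_i/\sigma_i$.
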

We remark that although the above theorem assumes that the singular values are distinct, we can still use the result to calculate the integral on the whole space $\Landz$ since the set of matrices with repeated singular values has measure zero. In fact, the set of $q$ with repeated roots is a zero set of discriminants of the characteristic polynomial and discriminants are polynomials in the entries of $q$. Therefore, the set of $q$ with repeated singular values is a zero set of a polynomial and has measure zero.

We will now derive the density formula on the quotient. 
\begin{theorem}
    \label{thm:volume_density_quotient}
    The volume density $\db \mu_h$ on the quotient $\Sreg$ can be expressed in terms of the SVD coordinates as
    \begin{equation*}
        \db\mu_h = \prod_{i=1}^{d} \sigma_i^{n-d-1} \prod_{i<j} \frac{(\sigma_i^2 - \sigma_j^2)}{(\sigma_i^2 + \sigma_j^2)^{1/2}} (\db D) \wedge (P^\top \db P).
    \end{equation*}
\end{theorem}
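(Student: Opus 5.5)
We will derive the quotient density from Theorem~\ref{thm:volume_density_prequotient} with $k=d$, using the fact that on the full-rank locus the map $\Landz\to\Sreg$, restricted to the preimage of $\Sreg$, is a Riemannian submersion for the Euclidean metric $h$. The $SO(d)$-orbits are the fibers. By the coarea (Fubini) formula for Riemannian submersions, the Euclidean volume on the preimage equals $\db\mu_h$ on the base times the Riemannian volume of the fiber through $q$. Hence we must compute the volume of the orbit $SO(d)\cdot q$ with the metric induced from $h$, and divide the pre-quotient density by it.

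First we set up coordinates. In the thin SVD $q=H_1DP^\top$ with $H_1\in V_{d,d}=O(d)$, left multiplication by $R\in SO(d)$ acts only on $H_1$, through $H_1\mapsto RH_1$. Thus $(D,P)$ are coordinates on the quotient, and $(H_1^\top\db H_1)$ is exactly the volume density of the orbit direction measured in the bi-invariant (Haar) normalization. There is a sign/orientation subtlety, since $O(d)$ has two components. Being careful about the $2^{-k}$ factor and the sign conventions of the SVD only produces a constant factor. We would absorb this factor, or the statement already does so, since only the density up to normalization of the Stiefel measure matters.

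Next we compute the orbit metric. The infinitesimal action of $A\in\mathfrak{so}(d)$ at $q$ is $Aq=AH_1DP^\top$, with squared norm $\|Aq\|_F^2=\operatorname{tr}(AH_1D^2H_1^\top A^\top)$. Write $\tilde A=H_1^\top AH_1$, which is still skew. In the basis $E_{ij}-E_{ji}$, $i<j$, this becomes $\sum_{i<j}\tilde A_{ij}^2(\sigma_i^2+\sigma_j^2)$, because $\|\tilde AD\|_F^2=\sum_{i\ne j}\tilde A_{ij}^2\sigma_j^2$. So, relative to the Haar form $(H_1^\top\db H_1)$, which corresponds to the unweighted form $\sum\tilde A_{ij}^2$, the induced orbit volume density carries the factor $\prod_{i<j}(\sigma_i^2+\sigma_j^2)^{1/2}$. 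Here we implicitly use that $h$ on $\Landz$ restricted to $qU$ coordinates is Frobenius, which holds because $U$ has orthonormal columns.

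Finally we divide. The pre-quotient density with $k=d$ is $2^{-d}\prod_i\sigma_i^{n-d-1}\prod_{i<j}(\sigma_i^2-\sigma_j^2)$, which gives the exponent $n+d-1-2d=n-d-1$. Dividing by the orbit density factor $\prod_{i<j}(\sigma_i^2+\sigma_j^2)^{1/2}$ yields the claimed formula, again up to the constant. The main obstacle is this last step: justifying rigorously that the submersion coarea factor equals precisely the orbit volume computed from the $\mathfrak{so}(d)$-action, and not a determinant mixing horizontal and vertical parts. The issue is that the SVD coordinates $(D,P)$ are not themselves orthogonal to the orbit. We would handle this by noting that the Jacobian of $q\mapsto(H_1,D,P)$ splits triangularly: the orbit directions are exactly $\partial_{H_1}$, so the pre-quotient volume form factors as (orbit form)$\wedge$(pullback of the base form). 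This is the standard fact that for a Riemannian submersion $\db X=\mathrm{vol}_{\text{fiber}}\wedge\pi^*\db\mu_h$, and it holds regardless of the transversal coordinates chosen.
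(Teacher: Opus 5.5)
Your proposal is correct and follows essentially the same route as the paper: both use the Riemannian submersion splitting $\db X=\mathrm{vol}_{\mathrm{vert}}\wedge\pi^*\db\mu_h$ and identify the vertical directions with variations of $H_1$ alone. Both then compute the Gram matrix of the infinitesimal rotations $\Omega\mapsto H_1\Omega DP^\top$, which gives the factor $\prod_{i<j}(\sigma_i^2+\sigma_j^2)^{1/2}$ relative to the Haar form $(H_1^\top\db H_1)$, and both drop the $2^{-d}$ multiplicity constant. The paper phrases this as contracting $\db X$ with the non-orthonormal vertical frame $U_{ij}=H_1\tfrac{E_{ij}-E_{ji}}{\sqrt2}DP^\top$ and dividing by $\sqrt{\det\langle U_{ij},U_{kl}\rangle}$, which is exactly your comparison of the induced orbit metric with the unweighted form $\sum_{i<j}\tilde A_{ij}^2$.
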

\begin{proof}
    Since the quotient map $\pi$ from full-rank locus of $\Landz$ to the quotient $\Sreg$ is a Riemannian submersion, orthogonal decomposition shows that we can decompose the volume density $\db X$ on the pre-quotient space $\Landz$ as
    \begin{equation*}
        \db X = \mathrm{vol}_{\mathrm{vert}} \otimes \pi^*\db\mu_h,
    \end{equation*}
    where $\mathrm{vol}_{\mathrm{vert}}$ is the volume density on the vertical bundle of the Riemannian submersion and $\db\mu_h$ is the Euclidean volume density on the quotient $\Sreg$. Contracting both sides with the oriented orthonormal vertical frame $V_1, \dots, V_{d(d-1)/2}$, we obtain the volume density on the quotient $\Sreg$ as
    \begin{equation*}
        \pi^* \db\mu_h = \iota_{V_1}\cdots \iota_{V_{d(d-1)/2}} \db X.
    \end{equation*}
    We would like to substitute general vertical vectors $U_1, \dots, U_{d(d-1)/2}$ instead of the orthonormal vertical frame. Using the change of basis formula for interior products, we have
    \begin{equation*}
        \iota_{V_1}\cdots \iota_{V_{d(d-1)/2}} \db X = \frac{\iota_{U_1}\cdots \iota_{U_{d(d-1)/2}} \db X}{\sqrt{\det(\langle U_i, U_j\rangle)}},
    \end{equation*}
    where $\langle \cdot, \cdot\rangle$ is the Euclidean inner product. At $q = H_1 D P^{\top}$, take $U_{ij} = H_1\frac{E_{ij}-E_{ji}}{\sqrt{2}}H_1^T q = H_1\frac{E_{ij}-E_{ji}}{\sqrt{2}} D P^{\top}$ for $1\leq i<j\leq d$ as the vertical vectors, where $E_{ij}$ is the $d\times d$ matrix with $1$ in the $(i,j)$-th entry and $0$ elsewhere. We obtain 
    \begin{align*}
        \langle U_{ij}, U_{kl}\rangle &= \frac{1}{2}\mathrm{tr}(P D(E_{ij}-E_{ji})^\top H_1^\top H_1(E_{kl}-E_{lk})D P^\top) \\ &= \frac{1}{2}\mathrm{tr}((E_{ij}-E_{ji})^\top(E_{kl}-E_{lk})D^2) \\ &= \frac{1}{2}\left\langle (E_{ij} - E_{ji}) D, (E_{kl} - E_{lk})D \right\rangle.
    \end{align*}
    Note that $E_{ij} D = \sigma_j E_{ij}$ is the matrix with $\sigma_j$ in the $(i,j)$-th entry and $0$ elsewhere, so
    \begin{equation*}
        \langle U_{ij}, U_{kl}\rangle = \frac{1}{2}\langle \sigma_jE_{ij} - \sigma_i E_{ji}, \sigma_l E_{kl} - \sigma_k E_{lk}\rangle,
    \end{equation*}
    which is equal to $\frac{\sigma_i^2 + \sigma_j^2}{2}$ if $(i,j) = (k,l)$ and $0$ otherwise. Therefore, we have
    \begin{equation*}
        \pi^* \db\mu_h = \frac{2^{d(d-1)/4}}{\prod_{i<j}(\sigma_i^2 + \sigma_j^2)^{1/2}} \iota_{U_1}\cdots \iota_{U_{d(d-1)/2}} \db X.
    \end{equation*}
    We finally calculate the interior product $\iota_{U_1}\cdots \iota_{U_{d(d-1)/2}} \db X$ using the SVD coordinate formula for $\db X$: by taking the curve given by $\gamma(t) = H_1\exp(t(E_{ij}-E_{ji})/\sqrt{2})D P^{\top}$, we have
    \begin{align*}
        (H_1^{\top}\db H_1)_{ij}(U_{kl}) &= \left(H_1^{\top} H_1 \frac{E_{kl}-E_{lk}}{\sqrt{2}}\right)_{ij} = \left(\frac{E_{kl}-E_{lk}}{\sqrt{2}}\right)_{ij} \\ &= \frac{1}{\sqrt{2}}\delta_{(i,j),(k,l)}.
    \end{align*}
    This yields
    \begin{equation*}
        \pi^* \db\mu_h = \prod_{i=1}^{d} \sigma_i^{n+d-1-2d} \prod_{i<j} \frac{(\sigma_i^2 - \sigma_j^2)}{(\sigma_i^2 + \sigma_j^2)^{1/2}} (\db D) \wedge (P^\top \db P).
    \end{equation*}
    We remark that we do not include the constant $2^{-d}$ here since Theorem \ref{thm:volume_density_prequotient} should be read as the global integration formula rather than the local differential form that is calculated here. It comes from the fact that there are $2^d$ equivalent SVD representations of the same matrix due to the sign ambiguities in the singular vectors.

    Since $\pi$ is a submersion, we can remove the pullback $\pi^*$ and obtain the volume density on the quotient $\Sreg$ as
    \begin{equation*}
        \db\mu_h = \prod_{i=1}^{d} \sigma_i^{n-d-1} \prod_{i<j} \frac{(\sigma_i^2 - \sigma_j^2)}{(\sigma_i^2 + \sigma_j^2)^{1/2}} (\db D) \wedge (P^\top \db P),
    \end{equation*}
    as claimed.
\end{proof}
We will use the above formula to calculate the following bound.
    \begin{proposition}
        \label{ref:singular_value_bound}
        For each $R>0$, there is a constant $G_1(R)>0$, depending only on $n,d$ and $R$, such that, for every $\epsilon>0$ and every nonnegative measurable function $w\colon(0,\infty)\to[0,\infty)$,
        \begin{equation}\label{eq:weighted-singular-value-bound}
            \int_{\{r_0<R,\ 0<\sigma_d<\epsilon\}}w(\sigma_d)\dd\mu_h
            \leq(n-d)G_1(R)\int_0^\epsilon w(s)s^{n-d-1}\dd s.
        \end{equation}
        In particular, taking $w\equiv1$ gives the volume bound
        \begin{equation*}
            \mu_h(\{[q]\in\Sreg:r_0([q])<R,\ 0<\sigma_d([q])<\epsilon\})
            \leq G_1(R)\epsilon^{n-d}.
        \end{equation*}
        The constant $G_1(R)$ is independent of $\epsilon$, $w$ and the metric $g$.
    \end{proposition}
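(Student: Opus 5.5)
We will bound the integral directly with the quotient density formula of Theorem~\ref{thm:volume_density_quotient}. Since the integrand and the domain depend only on $[q]$, and the full-rank matrices with repeated singular values form a null set, we may integrate in the SVD coordinates $(\sigma_1,\dots,\sigma_d,P)$ with $P\in V_{d,n-1}$. The constraint $r_0<R$ becomes $\sum_i\sigma_i^2<R^2$, because $r_0$ is the Frobenius norm of $q$ and multiplication by $U$ is an isometry onto $\mathbf{1}^\perp$. The constraint $0<\sigma_d<\epsilon$ only involves the smallest singular value. We order $\sigma_1>\dots>\sigma_d>0$, and each quotient point is covered by finitely many ($2^d$ up to sign choices) coordinate representations. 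Therefore
\begin{equation*}
\int_{\{r_0<R,\,0<\sigma_d<\epsilon\}} w(\sigma_d)\dd\mu_h \leq \mathrm{vol}(V_{d,n-1})\int w(\sigma_d)\prod_{i=1}^d\sigma_i^{n-d-1}\prod_{i<j}\frac{\sigma_i^2-\sigma_j^2}{(\sigma_i^2+\sigma_j^2)^{1/2}}\dd\sigma,
\end{equation*}
where the $\sigma$-integral runs over the ordered region with $|\sigma|<R$ and $\sigma_d<\epsilon$.

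Next we bound the Jacobian factor. For $i<j$ we have $\sigma_i^2-\sigma_j^2\leq\sigma_i^2+\sigma_j^2$, so each ratio is at most $(\sigma_i^2+\sigma_j^2)^{1/2}\leq R$. Likewise $\sigma_i^{n-d-1}\leq R^{n-d-1}$ for $i<d$, since $n-d-1\geq 0$ by Assumption~\ref{assumption:main_metric}. Only the factor $\sigma_d^{n-d-1}$ is kept exactly. The integrand is then bounded by $R^{c}\,w(\sigma_d)\sigma_d^{n-d-1}$, with $c=(d-1)(n-d-1)+d(d-1)/2$. Integrating $\sigma_1,\dots,\sigma_{d-1}$ over $[0,R]^{d-1}$ and then $\sigma_d$ over $(0,\epsilon)$ gives
\begin{equation*}
\int_{\{r_0<R,\,0<\sigma_d<\epsilon\}} w(\sigma_d)\dd\mu_h\leq \mathrm{vol}(V_{d,n-1})R^{c+d-1}\int_0^\epsilon w(s)s^{n-d-1}\dd s.
\end{equation*}
We set $G_1(R)=\mathrm{vol}(V_{d,n-1})R^{c+d-1}/(n-d)$, absorbing the factor $2^d$ from the sign ambiguity. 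This depends only on $n$, $d$ and $R$, as claimed, and yields \eqref{eq:weighted-singular-value-bound}. For the volume bound we take $w\equiv1$, so the right-hand integral equals $\epsilon^{n-d}/(n-d)$ and the factor $n-d$ cancels, leaving $G_1(R)\epsilon^{n-d}$.

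The main obstacle is justifying the passage from the local density formula of Theorem~\ref{thm:volume_density_quotient} to a global integration formula on $\Sreg$. This requires checking three things: the SVD parametrization (ordered distinct singular values, $P\in V_{d,n-1}$) covers the quotient up to a null set, the covering multiplicity is finite and bounded, and the image of $P$ in $V_{d,n-1}$ together with the remaining $H_1$-freedom exactly accounts for the quotient by $SO(d)$. I would argue this as the paper does for Theorem~\ref{thm:volume_density_prequotient}. The $H_1$ factor parametrizes the $SO(d)$-orbit (up to the finite sign group), so integrating out the fibre leaves the $(\db D)\wedge(P^\top\db P)$ part with finite multiplicity. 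Since we only need an upper bound, overcounting by a bounded multiplicity is harmless.
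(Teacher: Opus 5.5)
Your proposal is correct and follows essentially the same route as the paper: apply the quotient SVD density, keep $\sigma_d^{n-d-1}$ exactly, and bound the other $\sigma_i^{n-d-1}$ and the ratios $(\sigma_i^2-\sigma_j^2)/(\sigma_i^2+\sigma_j^2)^{1/2}$ by powers of $R$. You then integrate the remaining singular values over $(0,R)^{d-1}$ and the Stiefel factor, arriving at the same exponent $(d-1)(n-d)+d(d-1)/2=D-(n-d)$. Two small differences, neither affecting validity: your bound $(\sigma_i^2+\sigma_j^2)^{1/2}\leq R$, which uses $\sum_k\sigma_k^2<R^2$, is slightly sharper than the paper's $\sqrt{2}R$, and your remark that the finite covering multiplicity only overcounts makes explicit what the paper absorbs into the normalization of $\nu$.
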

    \begin{proof}
    Fix $R,\epsilon>0$ and a nonnegative measurable function $w$. An $r_0$ bound translates to a bound on the singular values since $r_0([q])^2 = \sum_{i=1}^{d} \sigma_i([q])^2$. We use ordered singular values $\sigma_1\geq\cdots\geq\sigma_d>0$. Let $\db \nu(P)$ denote the density on $V_{d,n-1}$ after absorbing any normalization or multiplicity constants and let $G_2=\int_{V_{d,n-1}}\db\nu(P)$. This is a fixed finite constant depending only on $n$ and $d$. Since $w\geq0$, the SVD formula and enlargement of the integration domain give
    \begin{align*}
        &\int_{\{r_0<R,\ 0<\sigma_d<\epsilon\}}w(\sigma_d)\dd\mu_h \\
        &\leq \int_{V_{d,n-1}}\int_{\substack{0<\sigma_d\leq\cdots\leq\sigma_1<R\\\sigma_d<\epsilon}}
        w(\sigma_d)\prod_{i=1}^{d}\sigma_i^{n-d-1}
        \prod_{i<j}\frac{\sigma_i^2-\sigma_j^2}{(\sigma_i^2+\sigma_j^2)^{1/2}}\dd D\dd\nu(P)\\
        &\leq \int_{V_{d,n-1}}\int_{\substack{0<\sigma_d\leq\cdots\leq\sigma_1<R\\\sigma_d<\epsilon}}
        w(\sigma_d)\prod_{i=1}^{d}\sigma_i^{n-d-1}
        \prod_{i<j}\sqrt{\sigma_i^2+\sigma_j^2}\dd D\dd\nu(P)\\
        &\leq \int_{V_{d,n-1}}\int_{\substack{0<\sigma_d\leq\cdots\leq\sigma_1<R\\\sigma_d<\epsilon}}
        w(\sigma_d)\sigma_d^{n-d-1}\prod_{i=1}^{d-1}R^{n-d-1}
        \prod_{i<j}\sqrt{2}R\dd D\dd\nu(P)\\
        &\leq  G_2\,2^{d(d-1)/4}R^{(d-1)(n-d)+d(d-1)/2}
        \int_0^\epsilon w(s)s^{n-d-1}\dd s.
    \end{align*}
    In the last step, we set $s=\sigma_d$, enlarge the domain of the other singular values to $(0,R)^{d-1}$, and integrate them and the angular variables. 

    With $D=d(n-1)-d(d-1)/2$, choose once
    \begin{equation}\label{eq:singular-volume-constant}
        G_1(R)=\frac{G_2\,2^{d(d-1)/4}}{n-d}R^{D-(n-d)}.
    \end{equation}
    Since $(d-1)(n-d)+d(d-1)/2=D-(n-d)$, the preceding estimate proves \eqref{eq:weighted-singular-value-bound}.

    Finally, setting $w\equiv1$, we obtain
    \begin{align*}
        \mu_h\{r_0<R,\ 0<\sigma_d<\epsilon\}
        \leq(n-d)G_1(R)\int_0^\epsilon s^{n-d-1}\dd s
        =G_1(R)\epsilon^{n-d}.
    \end{align*}
    The same volume bound holds for non-strict cutoffs by taking limits from above.
    \end{proof}

We can now prove the following theorem.
\begin{theorem}
    \label{thm:boundary-almost-polar}
$\partial \Sreg$ is almost polar.
\end{theorem}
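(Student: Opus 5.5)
First I would reduce the statement to bounded pieces via Lemma~\ref{lem:subadditivity}. By Proposition~\ref{prop:boundary_characterization}, every point of $\partial\Sreg$ has finite $r_0$ and vanishing $\sigma_d$. Every point of the completion also lies at finite $d_g$-distance from a fixed $[q_0]$. Hence
\[
\partial\Sreg=\bigcup_{m\geq1}\Sigma_m,\qquad \Sigma_m=\partial\Sreg\cap\overline{B_g([q_0],m)},
\]
and it suffices to show $\mathrm{Cap}(\Sigma_m)=0$ for each fixed $m$. On $B_g([q_0],m+2)$, Lemma~\ref{lemma:bounds_on_metric_balls} and Corollary~\ref{cor:comparison_g_metric_balls} give, with constants depending only on $m$, that $r_0<R_m$, that $g^{-1}\leq D_1e^{D_1(m+2)}h^{-1}$, and that $\db\mu_g\leq D_2e^{D_2(m+2)^2}\db\mu_h$. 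The only cost of passing to $g$ is therefore a multiplicative constant $K_m$.

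Next I would build the test functions. Let $\chi$ be a Lipschitz cutoff in $d_g([q_0],\cdot)$, equal to $1$ on $B_g([q_0],m+1)$, supported in $B_g([q_0],m+2)$, with $|\nabla_g\chi|_g\leq1$. For the $\sigma_d$-direction I would use a logarithmic cutoff, since $n-d\geq 3$ leaves room and even a linear cutoff suffices. For small $\epsilon$, set $\psi_\epsilon(s)=1$ for $s\leq\epsilon$, $\psi_\epsilon(s)=2-s/\epsilon$ for $\epsilon\leq s\leq 2\epsilon$, and $\psi_\epsilon(s)=0$ for $s\geq2\epsilon$. Then put $u_\epsilon=\chi\cdot\psi_\epsilon(\sigma_d)$. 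Since $\sigma_d$ is Euclidean-Lipschitz, $|\nabla_h\sigma_d|_h\leq1$ almost everywhere, and so $|\nabla_g\psi_\epsilon(\sigma_d)|_g^2\leq K_m\epsilon^{-2}$ on the support. Proposition~\ref{ref:singular_value_bound} with $w\equiv1$ then gives
\[
\|u_\epsilon\|_{L^2(\mu_g)}^2\leq K_m G_1(R_m)(2\epsilon)^{n-d},\qquad
\|\nabla_g u_\epsilon\|_{L^2(\mu_g)}^2\leq K_m G_1(R_m)\bigl(\epsilon^{-2}+1\bigr)(2\epsilon)^{n-d}.
\]
Both tend to $0$ as $\epsilon\to0$, because $n-d>2$ by Assumption~\ref{assumption:main_metric}(2). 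The set $O_\epsilon=\{\sigma_d<\epsilon\}\cap B_g([q_0],m+1)$, with $\sigma_d$ the continuous extension, is open in the completion and contains $\Sigma_m$. On $O_\epsilon\cap\Sreg$ we have $u_\epsilon=1$.

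The main obstacle is verifying that $u_\epsilon\in W^{1,2}_0(\Sreg)$, that is, that it can be approximated by compactly supported $C^{1,1}$ functions inside $\Sreg$. The support of $u_\epsilon$ touches the metric boundary only inside $\{\sigma_d\leq 2\epsilon\}$, but it must also avoid the part of the boundary where $\sigma_d\geq\epsilon$. By Proposition~\ref{prop:boundary_characterization} no such boundary points exist, so I would argue that $\{\sigma_d\geq\epsilon/2\}\cap\overline{B_g([q_0],m+2)}$ is compact in $\Sreg$. Concretely, $r_0$ is bounded, $\delta$ is bounded below and $\sigma_d\geq\epsilon/2$ there, so this set is a closed subset of a Euclidean-compact region of full-rank, collision-free configurations. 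The remaining trouble is the part near $\sigma_d=0$. To handle it, I would not use $u_\epsilon$ directly but rather $u_\epsilon-u_{\epsilon,\eta}'$-type modifications: multiply by $1-\psi_\eta(\sigma_d)$ and let $\eta\to0$. The product $(1-\psi_\eta)u_\epsilon$ is compactly supported in $\Sreg$ and Lipschitz, hence in $W_0^{1,2}$ after mollification in charts. By the same estimate it converges in $W^{1,2}$ to $u_\epsilon$, since the error norm is $O(\eta^{(n-d-2)/2})$. Closedness of $W_0^{1,2}$ then gives $u_\epsilon\in W^{1,2}_0$.

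Finally, since $\|u_\epsilon\|_{1,2}\to0$, we get $\mathrm{Cap}(\Sigma_m)=0$ for every $m$. Lemma~\ref{lem:subadditivity} then shows $\mathrm{Cap}(\partial\Sreg)=0$.
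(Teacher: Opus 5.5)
Your proof is correct. Its core is the same as the paper's: the same linear cutoff $\psi_\epsilon(\sigma_d)$, the same use of Proposition~\ref{ref:singular_value_bound} to make the energy $O(\epsilon^{n-d-2})$ (which is exactly where $n>d+2$ enters), and the same trick of multiplying by $1-\psi_\eta(\sigma_d)$ and using closedness of $W_0^{1,2}$. Your remark about a logarithmic cutoff is superfluous, since the linear one you actually use suffices.

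Where you genuinely differ is the localization.

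\textbf{The paper's route.} It covers $\partial\Sreg$ by $F_j=\{r_0\le j,\ \delta\ge 1/j,\ \sigma_d=0\}$. This needs the full Proposition~\ref{prop:boundary_characterization}: $r_0<\infty$ and $\delta>0$ on the boundary, plus continuity of the extended $r_0$ and $\delta$, so that $\{r_0<j+1,\ \delta>1/(j+1),\ \sigma_d<\epsilon\}$ is open in the completion. It then localizes with an $SO(d)$-averaged Euclidean bump $\eta_j$. The comparison constants on its support come directly from Assumption~\ref{assumption:main_metric}(3) and are polynomial in $j$. The Euclidean gradient bound on $\eta_j$ is transferred to $g$ through $g^{-1}\le B_2(j+2)^2h^{-1}$.

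\textbf{Your route.} You cover by $\partial\Sreg\cap\overline{B_g([q_0],m)}$, which is automatic. You localize with a $1$-Lipschitz function of $d_g([q_0],\cdot)$, so $|\nabla_g\chi|_g\le 1$ comes for free and no group averaging is needed. Of the boundary characterization you only use that $\sigma_d$ extends continuously and vanishes on $\partial\Sreg$. The price is that all quantitative control must be imported from Lemma~\ref{lemma:bounds_on_metric_balls} and Corollary~\ref{cor:comparison_g_metric_balls}. This covers the $r_0$ bound for the volume estimate, the $\delta$ lower bound needed for compactness of $\mathrm{supp}\,(1-\psi_\eta(\sigma_d))u_\epsilon$ in $\Sreg$, and the cometric and density comparisons. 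Those constants grow like $e^{D_2(m+2)^2}$, which is harmless for fixed $m$.

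Since that lemma rests on the collision-speed hypothesis, as does the paper's boundary characterization, neither route needs weaker assumptions. Yours is more intrinsic and slightly shorter. The paper's keeps the cutoff and its constants explicitly Euclidean.
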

\begin{proof}
    To make use of the comparison assumption in Assumption \ref{assumption:main_metric}, we decompose the boundary $\partial \Sreg$ into the pieces where $r_0$ and $\delta$ are controlled:
    \begin{align*}
        F_j &= \left\{[q]\in \overline{\Sreg}^{g}: r_0([q])\leq j, \delta([q]) \geq  \frac{1}{j}, \sigma_{d}([q]) = 0\right\}, \quad j\in \mathbb{N}, \\ 
        \partial \Sreg &= \bigcup_{j=1}^{\infty} F_j.
    \end{align*}
    We will show that $F_j$ has capacity zero, which implies that $\partial \Sreg$ has capacity zero by Lemma \ref{lem:subadditivity}.
    
    To show that $\mathrm{Cap}(F_j)=0$, we construct a family of functions $u_\epsilon \in W_0^{1,2}(\Sreg)$ that are $1$ on a neighborhood of $F_j$ and have vanishing $W^{1,2}$ norm as $\epsilon \to 0$: 
    \begin{equation*}
        u_{\epsilon} = \eta_j \psi_{\epsilon}(\sigma_{d}([q])),
    \end{equation*}
    where $\eta_j$ is a bump function on a neighborhood of $F_j$ and $\psi_\epsilon$ is a decreasing function on $[0,\infty)$ that vanishes as its argument grows, so it is in fact a function that is $1$ near $F_j$ and decays to $0$ as the singular value moves away from zero, i.e., away from $F_j$.

    We will describe the main steps of the proof. Our goal is to show that the integral
    \begin{equation*}
        \int_{\Sreg} \left(|u_\epsilon|^2 + |\nabla u_\epsilon|^2 \right)\db\mu_g
    \end{equation*}
    converges to zero as $\epsilon \to 0$. By definition of the capacity, this integral bounds the capacity of $F_j$ from above as follows by Cauchy-Schwarz:
    \begin{equation}
        \mathrm{Cap}(F_j) \leq \|u_{\epsilon}\|_{1,2} \leq \sqrt{2}\left( \int_{\Sreg} \left(|u_\epsilon|^2 + |\nabla u_\epsilon|^2 \right) \db\mu_g\right)^{1/2},
    \end{equation}
    so convergence to zero shows that $\mathrm{Cap}(F_j) = 0$.
    Fix $j$ and write $D=\dim\Sreg$. We will collect the coefficients in a single constant $J_1(j)$, independent of $\epsilon$ and $\tau$, to obtain the final estimate
    \begin{equation}\label{eq:capacity-energy-bound}
        \int_{\Sreg}(|u_\epsilon|^2+|\nabla u_\epsilon|^2)\dd\mu_g
        \leq J_1(j)\big((2\epsilon)^{n-d}+(2\epsilon)^{n-d-2}\big).
    \end{equation}
    
    Since $0\leq\eta_j,\psi_\epsilon\leq1$, we have $|u_\epsilon|^2\leq1$, and the main task is to control the gradient term $|\nabla u_\epsilon|^2$. We then evaluate the $\db\mu_g$ volume of the support by the SVD estimate provided earlier. The chain rule gives us
    \begin{equation*}
        \nabla u_\epsilon = \nabla \eta_j \psi_\epsilon(\sigma_d([q])) + \eta_j \psi_\epsilon'(\sigma_d([q])) \nabla \sigma_d([q]).
    \end{equation*}
    The smallest singular value is $1$-Lipschitz in the Frobenius norm, so Rademacher's theorem and the quotient construction give $|\nabla_h\sigma_d|_h\leq1$ almost everywhere. Therefore, we need to carefully construct $\eta_j$ and $\psi_\epsilon$ such that the gradient term is bounded. With this in mind, our four steps are as follows:
    \begin{enumerate}
        \item Construct a bump function $\eta_j$ on a neighborhood of $F_j$ such that $\nabla \eta_j$ is bounded.
        \item Construct a decreasing function $\psi_\epsilon$ such that the integral of $|\psi_\epsilon'|$ is bounded.
        \item Show that $u_{\epsilon}$ is in $W_0^{1,2}(\Sreg)$.
        \item Bound $|u_\epsilon|$ and $|\nabla u_\epsilon|$ to show that $\|u_{\epsilon}\|_{1,2} \to 0$.
    \end{enumerate}

    \paragraph{\bf Step 1: Construct a bump function $\eta_j$.} To construct $\eta_j$, we first construct it on $\Landz$ and then project it down to $\Sreg$. We first take a bump function $\tilde{\eta}_j$ that is $1$ on the compact set 
    \begin{equation*}
        \tilde{F}_j = \left\{q\in \Landz: r_0(q)\leq j + 1, \delta(q) \geq  \frac{1}{j + 1} \right\}
    \end{equation*}
    and is supported on the open set
    \begin{equation*}
        \tilde{U}_j = \left\{q\in \Landz: r_0(q)< j + 2, \delta(q) >  \frac{1}{j + 2} \right\}.
    \end{equation*}
    To make the function invariant with respect to the action of $SO(d)$, we average it over the group $SO(d)$:
    \begin{equation*}
        \bar{\eta}_j(q) = \int_{SO(d)} \tilde{\eta}_j(R\cdot q) \dd R,
    \end{equation*}
    where $\db R$ is the normalized Haar measure on $SO(d)$. It is straightforward to check that $\bar{\eta}_j$ is $1$ on $\tilde{F}_j$, supported on $\tilde{U}_j$, and has bounded gradient. Finally, we project it down to $\Sreg$ by defining $\eta_j([q]) = \bar{\eta}_j(q)$. By the invariance of $\bar{\eta}_j$ and the definition of Riemannian submersion, we have that $\|\nabla_h \eta_j([q])\|_{h} = \|\nabla_{\mathrm{Euc}} \bar{\eta}_j(q)\|_{\mathrm{Euc}}$, which is bounded. Here, $\nabla_h$ and $\|\cdot\|_h$ denote the gradient and norm with respect to the Riemannian metric $h$ on $\Sreg$ and similarly $\nabla_{\mathrm{Euc}}$ and $\|\cdot\|_{\mathrm{Euc}}$ denote the gradient and norm with respect to the Euclidean metric on $\Landz$.
    Choose $0\leq\tilde\eta_j\leq1$, so averaging preserves $0\leq\eta_j\leq1$, and fix
    \begin{equation*}
        J_2(j)=\max\{1,\|\nabla_h\eta_j\|_{L^\infty}\}.
    \end{equation*}
    This constant depends on the chosen cutoff but is independent of $\epsilon$ and $\tau$.
    
    \paragraph{\bf Step 2: Construct a decreasing function $\psi_\epsilon$ with a bounded derivative.} We construct $\psi_\epsilon$ by the simple linear transition
    \begin{equation*}
        \psi_{\epsilon}(s) = \begin{cases}
            1, & s \leq \epsilon, \\
            2 - \frac{s}{\epsilon}, & \epsilon < s < 2\epsilon, \\
            0, & s \geq 2\epsilon.
        \end{cases}
    \end{equation*}
    It is clear that $\psi_\epsilon$ is decreasing and that $|\psi_\epsilon'|$ is bounded by $\frac{1}{\epsilon}$.
    
    \paragraph{\bf Step 3: Show that $u_\epsilon$ is in $W_0^{1,2}(\Sreg)$.}
    It is enough to show that there exists a family $u_{\epsilon, \tau} \in W_0^{1,2}(\Sreg)$ so that $u_{\epsilon, \tau} \to u_{\epsilon}$ in $W^{1,2}$ since the $W_0^{1,2}(\Sreg)$ is defined as the completion of $C^{1,1}$-functions with compact support, so it is closed. The reason that $u_{\epsilon}$ may not have a compact support is because we do not control the lowest singular value $\sigma_d$ from below, so the support may approach the boundary, that is, points satisfying $\sigma_d=0$. To fix this, we add another cutoff function to control $\sigma_d$ from below. Therefore, we define
    \begin{equation*}
        u_{\epsilon, \tau} = \eta_j \psi_\epsilon(\sigma_d) (1 - \psi_\tau(\sigma_d)) =  (1 - \psi_\tau(\sigma_d))u_{\epsilon}.
    \end{equation*}
    The support of this function is in fact compact, since its support is closed and bounded on the pre-quotient space $\Landz$ by the bound on $r_0$, and the projection to $\Sreg$ is continuous. Moreover, the support is inside $\Sreg$, since the extra cutoff provided by $(1 - \psi_\tau(\sigma_d))$ ensures that the function vanishes near the boundary where $\sigma_d = 0$.
    
    The function $u_{\epsilon,\tau}$ is also in $W^{1,2}(\Sreg)$. In fact, each of $\eta_j$, $\psi_{\epsilon}$, $\psi_\tau$ and $\sigma_d$ is Lipschitz in Euclidean metric, and the product and composition of Lipschitz functions are also Lipschitz, so $u_{\epsilon,\tau}$ is Lipschitz in Euclidean metric, and hence bounded on the compact support. The Rademacher theorem shows that the derivative is also bounded in Euclidean metric. Now the comparison assumption in Assumption \ref{assumption:main_metric} shows that the derivative is also bounded in the metric of interest. Since the support of $u_{\epsilon, \tau}$ is compact, the volume of the support is finite, so we have that both $\|u_{\epsilon, \tau}\|_{L^2}$ and $\|\nabla u_{\epsilon, \tau}\|_{L^2}$ are finite, so $u_{\epsilon, \tau} \in W_0^{1,2}(\Sreg)$.

    Now, assuming that $\tau$ is small enough, e.g., $2\tau <\epsilon$, we have 
    \begin{equation*}
        u_{\epsilon} - u_{\epsilon,\tau} = \eta_{j}\psi_{\epsilon}(\sigma_d) \psi_{\tau}(\sigma_d) = \eta_j \psi_{\tau}(\sigma_d) = u_{\tau}.
    \end{equation*}
    Therefore, it is enough to show that $u_{\tau} \to 0$ in $W^{1,2}$ as $\tau \to 0$. This is done in the next step with $\epsilon$ in place of $\tau$. 

    \paragraph{\bf Step 4: Bound $|u_\epsilon|$ and $|\nabla u_\epsilon|$ to show that $\|u_{\epsilon}\|_{1,2} \to 0$.}
    On the support of $\eta_j$, Assumption~\ref{assumption:main_metric} gives
    \begin{equation}
        \label{eq:metric_comparison_on_support}
        g^{-1}\leq B_2(j+2)^2h^{-1},\qquad
        \db\mu_g\leq\frac{(j+2)^{B_3D/2}}{B_1^{D/2}}\dd\mu_h.
    \end{equation}
    By definition of $u_\epsilon$, we have $|u_\epsilon|\leq1$. Keeping $G_1$ from Proposition~\ref{ref:singular_value_bound} at the actual radius $j+2$, we obtain
    \begin{align*}
        \int_{\Sreg}|u_\epsilon|^2\dd\mu_g
        &\leq\frac{(j+2)^{B_3D/2}}{B_1^{D/2}}
        \mu_h\{r_0<j+2,\ 0<\sigma_d<2\epsilon\}\\
        &\leq\frac{(j+2)^{B_3D/2}}{B_1^{D/2}}G_1(j+2)(2\epsilon)^{n-d}.
    \end{align*}
    Now, we bound the gradient term. By the chain rule, we have
    \begin{equation*}
        \nabla u_\epsilon = \nabla \eta_j \psi_\epsilon(\sigma_d([q])) + \eta_j \psi_\epsilon'(\sigma_d([q])) \nabla \sigma_d([q]).
    \end{equation*}
    Using $|\nabla_h\eta_j|_h\leq J_2(j)$, $|\nabla_h\sigma_d|_h\leq1$, and the preceding cometric comparison, we have
    \begin{equation*}
        |\nabla u_\epsilon|\leq\sqrt{B_2}(j+2)J_2(j)
        \big(\psi_\epsilon(\sigma_d)+|\psi_\epsilon'(\sigma_d)|\big).
    \end{equation*}
    Since $\eta_j$ is supported where $r_0<j+2$ and $\delta>1/(j+2)$, the integral of the gradient term can be bounded as follows:
    \begin{align*}
        &\int_{\Sreg}|\nabla u_\epsilon|^2\dd\mu_g \notag\\
        &\quad\leq 2B_2(j+2)^2J_2(j)^2 \notag\\
        &\qquad{}\times\int_{\{\substack{r_0<j+2,\ \delta>1/(j+2)\\0<\sigma_d<2\epsilon}\}}
        \big(\psi_\epsilon(\sigma_d)^2+|\psi_\epsilon'(\sigma_d)|^2\big)\dd\mu_g\\
        &\quad\leq\frac{2B_2(j+2)^{2+B_3D/2}J_2(j)^2}{B_1^{D/2}} \notag\\
        &\qquad{}\times\int_{\{\substack{r_0<j+2,\ \delta>1/(j+2)\\0<\sigma_d<2\epsilon}\}}
        \big(\psi_\epsilon(\sigma_d)^2+|\psi_\epsilon'(\sigma_d)|^2\big)\dd\mu_h\\
        &\quad\leq\frac{2B_2(j+2)^{2+B_3D/2}J_2(j)^2}{B_1^{D/2}}\,(n-d)G_1(j+2) \notag\\
        &\qquad{}\times\int_0^{2\epsilon}\big(\psi_\epsilon(s)^2+|\psi_\epsilon'(s)|^2\big)s^{n-d-1}\dd s\\
        &\quad\leq\frac{2B_2(j+2)^{2+B_3D/2}J_2(j)^2G_1(j+2)}{B_1^{D/2}} \notag\\
        &\qquad{}\times\big((2\epsilon)^{n-d}+4(2\epsilon)^{n-d-2}\big).
    \end{align*}
    Here, this first inequality comes from the preceding bound \eqref{eq:metric_comparison_on_support} on $|\nabla u_\epsilon|$ and the support of $\eta_j$. The second inequality again uses the density comparison by \eqref{eq:metric_comparison_on_support}. The third inequality uses the SVD estimate by Proposition~\ref{ref:singular_value_bound} with $w(s) = \psi_{\epsilon}(s)^2 + |\psi_\epsilon'(s)|^2$. The last inequality evaluates the integrals using $|\psi_\epsilon|\leq1$ and $|\psi_\epsilon'|\leq\epsilon^{-1}$.

    Combining the $L^2$ and gradient estimates proves \eqref{eq:capacity-energy-bound} with the single fixed choice
    \begin{equation*}
        J_1(j)=\frac{(j+2)^{B_3D/2}G_1(j+2)}{B_1^{D/2}}
        \big(1+8B_2(j+2)^2J_2(j)^2\big).
    \end{equation*}
    The constants $J_1(j)$ and $J_2(j)$ may depend on the chosen bump function $\eta_j$, but are independent of $\epsilon$ and $\tau$. Hence, we have established that $\|u_\epsilon\|_{1,2}\to0$ as $\epsilon\to0$, which shows that $\mathrm{Cap}(F_j)=0$. By Lemma \ref{lem:subadditivity}, we have
    \begin{equation*}
        \mathrm{Cap}(\partial \Sreg) =0.
    \end{equation*}
    Therefore, $\partial \Sreg$ is almost polar as defined in the sense of Masamune.
\end{proof}

\subsection{Volume growth of metric balls}
We now calculate the bound for the volume of the metric balls in our setting. Our goal is to establish an upper bound so that the logarithm of the volume grows at most quadratically with the radius of the balls.
\begin{proposition}
    \label{prop:volume_growth_metric_balls}
    Fix $[q_0]\in\Sreg$. There is a constant $K_1\geq1$, depending additionally on $[q_0]$ but independent of $R$, such that, for every $R\geq1$,
    \begin{equation*}
        \mu_g(B_g([q_0],R))\leq K_1e^{K_1R^2}.
    \end{equation*}
\end{proposition}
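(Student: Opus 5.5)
The plan is to reduce the $g$-volume of the ball to an $h$-volume of an explicitly bounded region, using Lemma~\ref{lemma:bounds_on_metric_balls} and Corollary~\ref{cor:comparison_g_metric_balls}.

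First, by Lemma~\ref{lemma:bounds_on_metric_balls}, every $[q]\in B_g([q_0],R)$ with $R\geq1$ satisfies $r_0([q])\leq C_1e^{C_1R}$. Hence
\begin{equation*}
    B_g([q_0],R)\subset\{[q]\in\Sreg: r_0([q])<\rho_R\},\qquad \rho_R:=C_1e^{C_1R}+1 .
\end{equation*}
Second, by Corollary~\ref{cor:comparison_g_metric_balls}, on $B_g([q_0],R)$ the volume density satisfies $\db\mu_g\leq D_2e^{D_2R^2}\db\mu_h$. Therefore
\begin{equation*}
    \mu_g(B_g([q_0],R))\leq D_2e^{D_2R^2}\,\mu_h(\{r_0<\rho_R\}).
\end{equation*}

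Third, I bound the Euclidean quotient volume of $\{r_0<\rho\}$. The map $q\mapsto[q]$ is a Riemannian submersion, and the full-rank locus has full measure. One option is to use the SVD density of Theorem~\ref{thm:volume_density_quotient}. Each factor $\sigma_i^{n-d-1}$ is at most $\rho^{n-d-1}$, and each factor $(\sigma_i^2-\sigma_j^2)/(\sigma_i^2+\sigma_j^2)^{1/2}$ is at most $\sqrt2\rho$. The $d$ singular values range over $(0,\rho)$, and the Stiefel factor contributes a fixed constant $G_2$. This gives
\begin{equation*}
    \mu_h(\{r_0<\rho\})\leq G_3\rho^{D},\qquad D=\dim\Sreg,
\end{equation*}
where $G_3$ depends only on $n$ and $d$. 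The same computation is exactly Proposition~\ref{ref:singular_value_bound} with $\epsilon=\rho=R$, which gives $\mu_h\leq G_1(\rho)\rho^{n-d}=c\,\rho^{D}$, so I can simply invoke it. Alternatively, homogeneity gives the same bound: the quotient of the Euclidean ball is preserved under dilation, and $\mu_h$ scales like $\rho^D$.

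Combining the three steps,
\begin{equation*}
    \mu_g(B_g([q_0],R))\leq D_2G_3e^{D_2R^2}(C_1e^{C_1R}+1)^D\leq D_2G_3(2C_1)^De^{D_2R^2+DC_1R}.
\end{equation*}
Since $R\geq1$, we have $R\leq R^2$, so the exponent is at most $(D_2+DC_1)R^2$. Choosing $K_1=\max\{1,\,D_2G_3(2C_1)^D,\,D_2+DC_1\}$ then gives the claim.

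The main point needing care is the density comparison. Corollary~\ref{cor:comparison_g_metric_balls} is stated pointwise on the ball, so it must be integrated over the ball itself rather than over the larger set $\{r_0<\rho_R\}$. I do this by writing
\begin{equation*}
    \mu_g(B)=\int_B\frac{\db\mu_g}{\db\mu_h}\,\db\mu_h\leq D_2e^{D_2R^2}\mu_h(B)
\end{equation*}
and only afterwards enlarging $B$ to $\{r_0<\rho_R\}$. This avoids using the lower $\delta$ bound anywhere the ball comparison does not hold.
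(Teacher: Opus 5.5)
Your proposal is correct and matches the paper's proof: you bound $r_0$ on the ball via Lemma~\ref{lemma:bounds_on_metric_balls}, integrate the density bound of Corollary~\ref{cor:comparison_g_metric_balls} over the ball itself, then enlarge to an $r_0$-sublevel set whose $\mu_h$-volume is $O(\rho^{D})$ by Proposition~\ref{ref:singular_value_bound} with both cutoffs equal to $\rho$, and finally absorb $e^{DC_1R}$ into $e^{R^2}$. The only difference is cosmetic: you use the strict cutoff $\rho_R=C_1e^{C_1R}+1$, whereas the paper uses the non-strict cutoff $C_1e^{C_1R}$ together with its limiting remark.
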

\begin{proof}
    Keep $C_1$ from Lemma~\ref{lemma:bounds_on_metric_balls} and $D_2$ from Corollary~\ref{cor:comparison_g_metric_balls}, both for the fixed base point $[q_0]$. The density comparison gives
    \begin{align*}
        \mu_g(B_g([q_0],R))\leq D_2e^{D_2R^2}
        \int_{B_g([q_0],R)}\db\mu_h.
    \end{align*}
    The metric-ball bound gives $r_0([q])\leq C_1e^{C_1R}$ throughout the ball. Since $r_0$ bounds every singular value, Proposition~\ref{ref:singular_value_bound}, evaluated at the actual spatial cutoff $C_1e^{C_1R}$, yields
    \begin{align*}
        \int_{B_g([q_0],R)}\db\mu_h
        &\leq\mu_h\{r_0\leq C_1e^{C_1R},\ 0<\sigma_d\leq C_1e^{C_1R}\}\\
        &\leq G_1(C_1e^{C_1R})(C_1e^{C_1R})^{n-d}\\
        &=G_1(C_1)C_1^{n-d}e^{C_1DR},
    \end{align*}
    where $D=d(n-1)-d(d-1)/2$ and the equality uses the formula of $G_1(R)$ in Equation \eqref{eq:singular-volume-constant}. Combining the estimates, for $R\geq1$, we have $R\leq R^2$, so we obtain
    \begin{align*}
        \mu_g(B_g([q_0],R))
        &\leq D_2G_1(C_1)C_1^{n-d}e^{D_2R^2+C_1DR}\\
        &\leq D_2G_1(C_1)C_1^{n-d}e^{(D_2+C_1D)R^2}.
    \end{align*}
    The stated bound follows by choosing once
    \begin{equation*}
        K_1=\max\{1,D_2G_1(C_1)C_1^{n-d},D_2+C_1D\}.
    \end{equation*}
\end{proof}

\subsection{Proof of the stochastic completeness}
\label{sec:actual-proof}
We have now proven that the boundary is almost polar and that the volume growth of metric balls is not too fast. These results are enough to invoke Theorem \ref{thm:masamune} to show the stochastic completeness of the manifold.
\begin{proof}[Proof of Theorem~\ref{thm:stoch-complete}.]
    We have already shown that the boundary is almost polar, so it is enough to show that, for a fixed base point $[q_0]\in\Sreg$, the volume of the ball $v(r) = \mu_g(B_g([q_0],r))$ satisfies
    \begin{equation*}
        \int^{\infty} \frac{r}{\log{v(r)}}\dd r = \infty, \quad \text{or} \quad \int^{\infty} \frac{r}{v(r)}\dd r = \infty.
    \end{equation*}
    We consider in two cases: $\mu_g(\Sreg) < \infty$ and $\mu_g(\Sreg) = \infty$. If $\mu_g(\Sreg) < \infty$, then $v(r)$ is bounded by a constant, say $v(r) \leq C$ for all $r$. In this case,
\begin{equation*}
    \int^{\infty} \frac{r}{v(r)}\dd r \geq \int^{\infty} \frac{r}{C}\dd r = \infty.
\end{equation*}
    Hence, the integral condition is satisfied in this case. We will now consider the case $\mu_g(\Sreg) = \infty$. In this case, the volume of the metric balls grows without bound as $r \to \infty$, so $\log{v(r)}$ is positive for large enough $r$.
    By Proposition~\ref{prop:volume_growth_metric_balls}, we have $v(r) \leq K_1 e^{K_1 r^2}$ for some constant $K_1>0$ independent of $r$. Therefore, we have $\log{K_1} \leq K_1 r^2$ for $r\geq 1$, so 
    \begin{align*}
        \int^{\infty} \frac{r}{\log{v(r)}}\dd r
        &\geq \int^{\infty} \frac{r}{\log{(K_1 e^{K_1 r^2})}}\dd r\\
        &= \int^{\infty} \frac{r}{\log{K_1} + K_1 r^2}\dd r\\
        &\geq \int^{\infty} \frac{r}{K_1 r^2+ K_1 r^2}\dd r\\
        &= \frac{1}{2K_1} \int^{\infty} \frac{1}{r}\dd r\\
        &= \infty.
    \end{align*}
    Hence, by Theorem \ref{thm:masamune}, the manifold $(\Sreg,g)$ is stochastically complete.
\end{proof}

\section{Metrics satisfying our assumptions}
\label{sec:metrics-assumptions}
We now prove that Assumption~\ref{assumption:main_metric} is satisfied in general cases for metrics given by scalar kernels and for the conditionally positive definite matrix-valued kernel $k_s$.

\subsection{Positive-definite scalar kernels}
In the scalar positive definite case, we start with the following kernel assumption. The assumption was used in \cite{habermann2026stochasticcompletenesslandmarkspace} for the proof of stochastic completeness in the positive definite case.
\begin{assumption}
\label{asm:kernel hypothesis} The function $k\colon [0, \infty)\to \mathbb{R}$ is smooth on $(0,\infty)$ and $q\mapsto k(\|q\|)$ on $\R^d$ has a positive continuous Fourier transform $\widehat{k}\colon\mathbb{R}^d\to \mathbb{R}$. Moreover, there exist a threshold $\varepsilon_0>0$, constants $A_1,A_2>0$, and an exponent $p\geq 1$, such that
\begin{equation*}
    k(0) - k(s) \leq -A_1s^2 \log{s}, \quad 0<s<\varepsilon_0,
\end{equation*}
and, for all sufficiently large $\xi$,
\begin{equation}\label{eq:HS-assumptions_2}
    \widehat{k}(\xi) \geq A_2\|\xi\|^{-p}.
\end{equation}
\end{assumption}

On quotient covectors define
\begin{equation}\label{eq:scalar-cometric}
 g_k^{-1}(\alpha,\beta)
 =\sum_{i,j=1}^n k(\rho_{ij})\alpha_i^{\mathsf T}\beta_j.
\end{equation}

Our goal in this subsection is to show the following theorem.
\begin{theorem}\label{thm:scalar-metric-assumption} Assume $d\geq 2$ and $n > d+2$.
The scalar-kernel metric \eqref{eq:scalar-cometric} satisfies Assumption~\ref{assumption:main_metric} for any kernel satisfying Assumption~\ref{asm:kernel hypothesis}.
\end{theorem}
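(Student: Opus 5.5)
Items (1) and (2) of Assumption~\ref{assumption:main_metric} are hypotheses of the theorem. The work lies in the two-sided comparison (3) and the collision speed control (4). The main observation is that the quotient cometric is the restriction of the landmark cometric $\sum_{i,j}k(\rho_{ij})\alpha_i^{\mathsf T}\beta_j$ to the annihilator $(\mathfrak{so}(d)\cdot q)^\circ$, and $h^{-1}$ is the restriction of the Euclidean cometric. Hence any two-sided bound $c(q)\,|\alpha|^2\le\sum k(\rho_{ij})\alpha_i^{\mathsf T}\alpha_j\le C(q)\,|\alpha|^2$ valid for all centered covectors passes directly to the quotient. So I would prove these bounds on $\Landz$ and then restrict.

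\textbf{Upper bound in (3).} Since $|k(\rho_{ij})|\le k(0)$ (a positive Fourier transform gives $|k|\le k(0)$), Cauchy--Schwarz gives
\[
\sum_{i,j}k(\rho_{ij})\alpha_i^{\mathsf T}\alpha_j\le k(0)\Bigl(\sum_i|\alpha_i|\Bigr)^2\le nk(0)|\alpha|^2 .
\]
This is even stronger than the $(1\lor r_0)^2$ growth allowed, so we may take $B_2\ge nk(0)$.

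\textbf{Lower bound in (3).} I would use the Fourier representation
\[
\sum_{i,j}k(\rho_{ij})\alpha_i^{\mathsf T}\alpha_j=(2\pi)^{-d}\int\widehat k(\xi)\Bigl|\sum_i\alpha_ie^{i\langle\xi,q_i\rangle}\Bigr|^2\dd\xi ,
\]
together with the standard separation-radius estimate for positive definite kernels (Narcowich--Ward, Wendland). Take a bump $\phi_\delta$ supported in a ball of radius $\delta$ with $\widehat{\phi_\delta}\le\widehat k$; by~\eqref{eq:HS-assumptions_2} and positivity/continuity of $\widehat k$ on compacts this is possible with $\widehat{\phi_\delta}(\xi)\gtrsim\delta^{p}$ scaling. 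Then compare with $\sum_{i,j}\phi_\delta(q_i-q_j)\alpha_i^{\mathsf T}\alpha_j=\phi_\delta(0)|\alpha|^2$, since the off-diagonal terms vanish by $\delta$-separation. This yields $\lambda_{\min}\ge c\,(1\land\delta)^{p}$ with $c=c(n,d,k)$, so $B_3=p$ works. This is the lemma I would cite from \cite{habermann2026stochasticcompletenesslandmarkspace}, where exactly this bound was used in the non-quotient case.

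\textbf{Collision speed (4).} Here $d\rho_{ij}=(e,-e)$ in slots $i,j$ with $e=(q_i-q_j)/\rho_{ij}$. One checks that this covector is centered and annihilates $\mathfrak{so}(d)\cdot q$: indeed $\langle e,A(q_i-q_j)\rangle=0$ for skew $A$, because $q_i-q_j$ is parallel to $e$. So it is already a quotient covector, and $|\nabla_g\rho_{ij}|_g^2=g^{-1}(d\rho_{ij},d\rho_{ij})=2(k(0)-k(\rho_{ij}))$. For $\rho_{ij}<\varepsilon_0$ the kernel hypothesis gives this is at most $2A_1\rho_{ij}^2\log(1/\rho_{ij})\le 2A_1\rho_{ij}^2\log(er_*/\rho_{ij})$ with $r_*=\varepsilon_0$. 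Taking $B_2\ge\sqrt{2A_1}$ finishes (4).

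\textbf{Main obstacle.} The obstacle is the lower bound in (3). It requires producing the explicit $\delta^{p}$ dependence from the Fourier decay and handling $\delta\ge1$ uniformly, where the bound should be a constant. For the latter I would use the cutoff $\delta\land\delta_0$, since a smaller separation radius is still valid.
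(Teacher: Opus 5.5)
Your argument has the same structure as the paper's proof. The upper bound comes from $|k|\le k(0)$ and Cauchy--Schwarz. The lower bound comes from the separation-radius estimate of \cite{habermann2026stochasticcompletenesslandmarkspace}, with the cutoff $1\land\delta$. The collision speed comes from the exact identity $g^{-1}(\db\rho_{ij},\db\rho_{ij})=2(k(0)-k(\rho_{ij}))$. Your explicit check that $\db\rho_{ij}$ is centered and annihilates $\mathfrak{so}(d)\cdot q$ is a point the paper leaves implicit.

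One step in (4) fails as written. The inequality $\log(1/\rho_{ij})\le\log(e\varepsilon_0/\rho_{ij})$ is equivalent to $\varepsilon_0\ge e^{-1}$. The threshold in the kernel hypothesis can be shrunk but not enlarged, so you cannot assume this. The fix is the one the paper uses: for $0<\rho<\varepsilon_0$,
\[
-\log\rho=\log(1/\varepsilon_0)+\log(\varepsilon_0/\rho)\le C_{\varepsilon_0}\log(e\varepsilon_0/\rho),\qquad C_{\varepsilon_0}=\max\{1,\log(1/\varepsilon_0)\}.
\]
So you should take $B_2\ge\sqrt{2A_1C_{\varepsilon_0}}$ rather than $\sqrt{2A_1}$.

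A smaller remark on the lower bound in (3). The paper cites the band-limited version of the estimate, with $M=7d/(1\land\delta)$ and $B_3=p_*-d$ for $p_*>\max\{p,d\}$; your physical-space bump sketch does not change the substance. However, your scaling is off by $\delta^{-d}$. If $\phi_\delta(x)=\lambda\Phi(x/\delta)$, then $\widehat{\phi_\delta}(\xi)=\lambda\delta^d\widehat\Phi(\delta\xi)$. Keeping this below $\widehat k$ forces $\lambda\sim\delta^{p-d}$, so $\phi_\delta(0)\sim\delta^{p-d}$. Your choice $B_3=p$ is still valid because $1\land\delta\le1$; it is just not sharp.
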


\subsubsection{Verifying the cometric comparison}

\begin{proposition}\label{prop:scalar-verification}
Fix an exponent $p_*>\max\{p,d\}$. The scalar-kernel metric \eqref{eq:scalar-cometric} satisfies the comparison part of Assumption~\ref{assumption:main_metric} with $B_3=p_*-d$. In particular, there are constants $M_1,M_2>0$, independent of the configuration and the quotient covector, such that
\begin{equation*}
    M_1(1\land\delta)^{p_*-d}h^{-1}(\alpha,\alpha)
    \leq g_k^{-1}(\alpha,\alpha)\leq M_2h^{-1}(\alpha,\alpha).
\end{equation*}
\end{proposition}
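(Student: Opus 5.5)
The approach is to work on $\Landz$ first and then restrict. A quotient covector $\alpha$ is just a covector in $T_q^*\Landz$ satisfying the symmetry constraint. Both $g_k^{-1}$ and $h^{-1}$ on $\Sreg$ are restrictions of the corresponding forms on $\Landz$; here $h^{-1}(\alpha,\alpha)=\sum_i\|\alpha_i\|^2$. It therefore suffices to prove the two-sided bound for all $\alpha\in(\R^d)^n$, with the same constants.

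\textbf{Upper bound.} Use Fourier representation. Since $k(\|\cdot\|)$ has positive integrable Fourier transform, we have $|k(\rho)|\leq k(0)$. Schur's test, or Gershgorin applied to the scalar matrix $(k(\rho_{ij}))_{ij}$, gives $g_k^{-1}(\alpha,\alpha)\leq nk(0)\sum_i\|\alpha_i\|^2$. Take $M_2=nk(0)$. This is stronger than the $(1\lor r_0)^2$ bound required in Assumption~\ref{assumption:main_metric}.

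\textbf{Lower bound.} Write
\begin{equation*}
g_k^{-1}(\alpha,\alpha)=(2\pi)^{-d}\int_{\R^d}\widehat k(\xi)\Big\|\sum_i\alpha_ie^{i\langle\xi,q_i\rangle}\Big\|^2\dd\xi .
\end{equation*}
We then bound $\widehat k$ from below by the Fourier transform of a Mat\'ern-type function. By positivity and continuity of $\widehat k$, together with \eqref{eq:HS-assumptions_2}, there is $c>0$ with
\begin{equation*}
\widehat k(\xi)\geq c(1+\|\xi\|^2)^{-p_*/2}\quad\text{for all }\xi .
\end{equation*}
This holds because $p_*>p$: on a large ball we use the positive minimum of $\widehat k$, and outside it we use the decay bound. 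Consequently $g_k^{-1}\geq c'\,g_{k_{p_*}}^{-1}$, where $k_{p_*}$ is the Mat\'ern kernel of $(\mathrm{Id}-\Delta)^{p_*/2}$. This kernel is well defined because $p_*>d$.

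It remains to prove the standard scattered-data lower eigenvalue bound for the Mat\'ern kernel. The smallest eigenvalue of $(k_{p_*}(\rho_{ij}))$ is at least $C(1\land\delta)^{p_*-d}$, where $\delta$ is half the separation distance. For this I would follow the Narcowich--Ward / Wendland argument. Choose a compactly supported bump $\chi$ at scale $\delta$ whose Fourier transform satisfies $0\leq\widehat\chi\leq\widehat{k_{p_*}}$, up to a factor $\sim(1\land\delta)^{p_*-d}$, together with $\chi(0)\gtrsim\delta^{-d}\cdot$ that factor. Since $\chi$ vanishes off the diagonal at separation $2\delta$,
\begin{equation*}
\sum_{i,j}\alpha_i^{\mathsf T}\alpha_j\chi(q_i-q_j)=\chi(0)\sum_i\|\alpha_i\|^2 .
\end{equation*}
Concretely, take $\widehat\chi(\xi)=\delta^{d}\,\widehat\phi(\delta\xi)$ with $\phi=\varphi*\varphi$ and $\varphi$ supported in the ball of radius $\delta$, so that $\widehat\phi\geq0$ decays fast. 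Comparing with $(1+\|\xi\|^2)^{-p_*/2}$ gives the scaling: after rescaling, $(1+\|\xi\|^2)^{-p_*/2}\gtrsim\delta^{p_*}(\delta^2+\|\delta\xi\|^2)^{-p_*/2}$ for $\delta\leq1$. For $\delta\geq1$ we simply use the case $\delta=1$, since larger separation only helps.

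The main obstacle is making this Fourier-side construction rigorous with explicit constants: the dominating bump must satisfy $\widehat\chi\leq\widehat{k_{p_*}}$ uniformly and its value at $0$ must scale correctly. Tracking the $\delta$-dependence yields exactly the exponent $p_*-d$. Restricting to quotient covectors then gives the claim with $B_3=p_*-d$.
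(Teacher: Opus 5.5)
Your upper bound is the paper's: $|k|\le k(0)$ gives $M_2=nk(0)$. Your reduction to arbitrary covectors followed by restriction to quotient covectors is also fine. The lower bound is correct but argued differently, and the difference is worth noting.

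\textbf{The paper's route.} The paper extracts from the kernel hypothesis only the local estimate $\inf_{\|\xi\|\le 2M}\widehat k(\xi)\ge M_3M^{-p_*}$ for $M\ge1$. It then sets $M=7d/(1\land\delta)$ and cites Theorem~4.1 of the earlier stochastic-completeness paper. That result is a Narcowich--Ward/Wendland-type eigenvalue bound whose only input is the infimum of $\widehat k$ on the ball of radius $2M$.

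\textbf{Your route.} You use a global minorant $\widehat k\ge c(1+\|\xi\|^2)^{-p_*/2}$, which is available here because $\widehat k$ is positive and continuous. You pair it with a test function $\chi$ that is compactly supported in physical space. Then $\chi(q_i-q_j)=0$ for $i\ne j$, so no off-diagonal summation over the configuration is needed.

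\textbf{Comparison.} Your argument is self-contained and shows transparently where the exponent $p_*-d$ comes from. Its price is that it needs a lower bound on $\widehat k$ on all of $\R^d$, whereas the cited bound needs $\widehat k$ only on a ball. The detour through the Mat\'ern kernel is superfluous: you can compare $\widehat\chi$ with $\widehat k$ directly.

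\textbf{The step you flagged is immediate.} For $\delta\le1$, take $\varphi\in C_c^\infty$ even and supported in the ball of radius $1/2$, set $\phi=\varphi*\varphi$, and let $\chi(x)=\phi(x/\delta)$. Then $\chi(0)=\|\varphi\|_2^2$ does not depend on $\delta$, and $\chi$ vanishes at every $q_i-q_j$ with $i\ne j$. Moreover,
\begin{equation*}
0\le\widehat\chi(\xi)=\delta^d\widehat\phi(\delta\xi)\le C\delta^d(1+\|\delta\xi\|^2)^{-p_*/2}\le C\delta^{d-p_*}(1+\|\xi\|^2)^{-p_*/2}.
\end{equation*}
Hence $\widehat k-(c/C)\delta^{p_*-d}\widehat\chi\ge0$, and therefore $g_k^{-1}(\alpha,\alpha)\ge(c/C)\|\varphi\|_2^2\,\delta^{p_*-d}\sum_i\|\alpha_i\|^2$. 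For $\delta>1$, reuse the $\delta=1$ function.

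\textbf{Two slips to fix when writing it up.}
\begin{enumerate}
\item Your displayed comparison $(1+\|\xi\|^2)^{-p_*/2}\gtrsim\delta^{p_*}(\delta^2+\|\delta\xi\|^2)^{-p_*/2}$ is actually an identity. The inequality you need has $1+\|\delta\xi\|^2$ on the right.
\item You rescale twice: you take $\varphi$ supported in a $\delta$-ball and also set $\widehat\chi(\xi)=\delta^d\widehat\phi(\delta\xi)$. The support of $\varphi$ should be fixed and independent of $\delta$.
\end{enumerate}
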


\begin{proof}
Since $k$ has a positive Fourier transform, we have $|k(r)| \leq k(0)$ for all $r\geq0$. Therefore,
\begin{align*}
 g_k^{-1}(\alpha,\alpha)
 &\leq \sum_{i,j=1}^n |k(\rho_{ij})|\|\alpha_i\| \|\alpha_j\| \leq k(0)\left(\sum_{i=1}^n\|\alpha_i\|\right)^2\\
 &\leq nk(0)\sum_{i=1}^n\|\alpha_i\|^2
 =nk(0)h^{-1}(\alpha,\alpha).
\end{align*}

The choice $p_*>p$ preserves the inequality \eqref{eq:HS-assumptions_2} for large enough $\|\xi\|$ with exponent $p_*$ and the original coefficient $A_2$. Positivity and continuity of $\widehat k$ on compact sets, together with this high-frequency lower bound, give a fixed constant $M_3>0$ such that, for all $M\geq1$,
\[
 \inf_{\|\xi\|\leq2M}\widehat k(\xi)\geq M_3M^{-p_*}.
\]
Here $M_3$ depends on the fixed kernel and the chosen exponent, but not on $M$ or the configuration. Take $M=7d/(1\land\delta)$, so $M\geq1$ and $M\geq7d/\delta$. Theorem 4.1 in \cite{habermann2026stochasticcompletenesslandmarkspace} gives
\begin{align*}
 g_k^{-1}(\alpha,\alpha)
 &\geq\frac{M_3}{2^{1+3d/2}\Gamma(1+d/2)}M^{d-p_*}h^{-1}(\alpha,\alpha)\\
 &=\frac{M_3(7d)^{d-p_*}}{2^{1+3d/2}\Gamma(1+d/2)}
 (1\land\delta)^{p_*-d}h^{-1}(\alpha,\alpha).
\end{align*}
Thus the final comparison constants can be fixed as
\begin{equation*}
    M_1=\frac{M_3(7d)^{d-p_*}}{2^{1+3d/2}\Gamma(1+d/2)},
    \qquad M_2=nk(0).
\end{equation*}
This proves the comparison part of our Assumption~\ref{assumption:main_metric} with $B_1=M_1$, $B_3=p_*-d$, and any $B_2\geq M_2$.
\end{proof}
\subsubsection{Verifying the collision speed control}
We now verify the collision speed control using the assumption that $k(0)-k(r)\leq-A_1r^2\log r$ for $0<r<\varepsilon_0$.
\begin{proposition}
    \label{prop:pd-speed-control}
    The scalar-kernel metric \eqref{eq:scalar-cometric} satisfies the speed control part of Assumption~\ref{assumption:main_metric} with $r_* = \varepsilon_0$.
\end{proposition}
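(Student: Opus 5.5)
The plan is to compute $|\nabla_g\rho_{ij}|_g^2 = g^{-1}_k(\db\rho_{ij},\db\rho_{ij})$ directly. On the quotient, the cometric is the restriction of the cometric on $\Landz$ to horizontal (annihilating) covectors. Since $\rho_{ij}$ is $SO(d)$-invariant and translation invariant, its differential already lies in $T^*_q\Sreg$, so no projection is needed. Concretely, writing $u=(q_i-q_j)/\rho_{ij}$, the covector $\alpha=\db\rho_{ij}$ has components $\alpha_i=u$, $\alpha_j=-u$, and $\alpha_l=0$ otherwise. It sums to zero, and $\sum_l q_l\alpha_l^\top=(q_i-q_j)u^\top=\rho_{ij}uu^\top$ is symmetric, which confirms it is a quotient covector.

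Plugging into \eqref{eq:scalar-cometric} gives
\begin{equation*}
g_k^{-1}(\db\rho_{ij},\db\rho_{ij}) = k(0)\|u\|^2+k(0)\|u\|^2-2k(\rho_{ij})\|u\|^2 = 2\big(k(0)-k(\rho_{ij})\big).
\end{equation*}
For $0<\rho_{ij}<r_*=\varepsilon_0$, Assumption~\ref{asm:kernel hypothesis} bounds this by $-2A_1\rho_{ij}^2\log\rho_{ij}$. Note that $k(0)-k(\rho)\geq0$ because $|k|\leq k(0)$, as used in Proposition~\ref{prop:scalar-verification}.

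It remains to compare $-\log\rho$ with $\log(e r_*/\rho)=1+\log r_*-\log\rho$. This is the only mildly delicate point, since $\log r_*$ may be negative when $\varepsilon_0<1$. In that case $-\log\rho$ can exceed $\log(er_*/\rho)$ by the additive constant $-\log r_*$, but the ratio stays bounded. The function $-\log\rho/\log(er_*/\rho)$ on $(0,r_*)$ tends to $1$ as $\rho\to0$ and is continuous. At $\rho\to r_*$ the numerator tends to $-\log r_*$ while the denominator tends to $1$. So the ratio is bounded by some constant $c=\max\{1,1-\log r_*\}$, which one can check directly using monotonicity in $t=-\log\rho$. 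When the numerator is negative (for $\rho>1$), the bound holds trivially since the left side is nonnegative.

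Hence $|\nabla_g\rho_{ij}|_g\leq\sqrt{2A_1c}\,\rho_{ij}\sqrt{\log(er_*/\rho_{ij})}$. We then enlarge $B_2$ to be at least $\sqrt{2A_1c}$ as well as $M_2$ from Proposition~\ref{prop:scalar-verification}, so that a single constant serves both parts of Assumption~\ref{assumption:main_metric}. I expect no real obstacle. The main things to verify carefully are that $\db\rho_{ij}$ is horizontal, so the quotient cometric agrees with the ambient one on it, and the constant bookkeeping in the logarithm comparison.
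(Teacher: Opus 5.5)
Your proposal is correct and follows the paper's own route. Like the paper, you compute $g_k^{-1}(\db\rho_{ij},\db\rho_{ij})=2(k(0)-k(\rho_{ij}))$, apply the kernel bound, absorb $-\log\rho_{ij}$ into a constant multiple of $\log(e\varepsilon_0/\rho_{ij})$, and then enlarge $B_2$. Your explicit check that $\db\rho_{ij}$ is a quotient covector is a useful detail the paper leaves implicit. Your constant $\max\{1,1-\log r_*\}$ is valid, though slightly larger than the paper's $\max\{1,\log(1/\varepsilon_0)\}$, which is exactly the supremum of the ratio.
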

\begin{proof}
    We will calculate $g^{-1}_k(\db\rho_{ij}, \db\rho_{ij})$. By direct calculation of the covectors, we have
    \begin{equation*}
        \db\rho_{ij} = (\nabla_{q_1} \rho_{ij}, \nabla_{q_2} \rho_{ij}, \dots, \nabla_{q_n} \rho_{ij}).
    \end{equation*}
    Each component is calculated as
    \begin{equation*}
        \nabla_{q_k} \rho_{ij} =
        \begin{cases}
            \frac{q_k - q_j}{\rho_{ij}}, & \text{if } k=i,\\
            \frac{q_k - q_i}{\rho_{ij}}, & \text{if } k=j,\\
            0, & \text{otherwise}.
        \end{cases}
    \end{equation*}
    Therefore, we obtain
    \begin{align*}
    g^{-1}_k(\db\rho_{ij}, \db\rho_{ij}) &= \sum_{k,l=1}^{n}k(\rho_{kl}) \nabla_{q_k} \rho_{ij}^{\top} \nabla_{q_l} \rho_{ij} \\
    &= k(\rho_{ii}) + k(\rho_{jj}) - 2 k(\rho_{ij}) \\
    &= 2(k(0) - k(\rho_{ij})).
    \end{align*}
    Set
    \begin{equation*}
        C_{\varepsilon_0}=\max\{1,\log(1/\varepsilon_0)\}.
    \end{equation*}
    For $0<r<\varepsilon_0$, we have $\log(\varepsilon_0/r)>0$, and hence
    \begin{align*}
        -\log r
        &=\log(1/\varepsilon_0) + \log(\varepsilon_0/r)\\
        &\leq C_{\varepsilon_0}\bigl(1+\log(\varepsilon_0/r)\bigr)
        =C_{\varepsilon_0}\log(e\varepsilon_0/r).
    \end{align*}
    Thus, whenever $0<\rho_{ij}<\varepsilon_0$, the kernel assumption gives
    \begin{align*}
        g^{-1}_k(\db\rho_{ij}, \db\rho_{ij})
        &\leq -2A_1\rho_{ij}^2\log\rho_{ij}\\
        &\leq 2A_1C_{\varepsilon_0}\rho_{ij}^2
        \log\left(\frac{e\varepsilon_0}{\rho_{ij}}\right).
    \end{align*}
    Taking square roots yields
    \begin{equation*}
        |\nabla_{g_k}\rho_{ij}|_{g_k}
        \leq\sqrt{2A_1C_{\varepsilon_0}}\,\rho_{ij}
        \sqrt{\log\left(\frac{e\varepsilon_0}{\rho_{ij}}\right)}.
    \end{equation*}
    Therefore, both the comparison and the collision-speed conditions of Assumption~\ref{assumption:main_metric} hold with the single fixed choices
    \begin{equation*}
        B_2=\max\{M_2,\sqrt{2A_1C_{\varepsilon_0}}\},
        \qquad M_2=nk(0),
        \qquad r_*=\varepsilon_0.
    \end{equation*}
\end{proof}

\subsection{The CPD metric}

As we recall from Section \ref{sec:background}, the conditionally positive definite metrics constructed in \cite{joshi2026landmark} are obtained through the screened elasticity operator defined by 
\begin{equation*}
    L_s u = -(\mathrm{Id}-\sigma^2 \Delta)^{s} \circ \mathrm{div} \circ \sym{\nabla u}, \quad (\sym \nabla u)_{ij} = \partial _i u_j+\partial_j u_i,
\end{equation*} 
where $s>d/2, \sigma>0$ is a fixed parameter and $\mathrm{div}$ is the matrix divergence $(\mathrm{div} A)_j = \sum_{i=1}^d \partial_i A_{ij}$ for a matrix field $A\colon\mathbb{R}^d\to \mathbb{R}^{d\times d}$. The Green's function $k_s\colon \mathbb{R}^d \to \mathbb{R}^{d\times d}$ of $L_s$ is used to define the CPD metric on the cotangent space of $\Landz$, and then on the quotient $\Landz/\mathrm{SO}(d)$ as 
\begin{equation*}
    g^{-1}_s(\alpha,\beta) = \sum_{i,j=1}^n \alpha_i^\top k_s(q_i-q_j) \beta_j.
\end{equation*}
Our goal in this section is to show the following theorem.
\begin{theorem} Assume $d\geq 2$ and $n > d+2$.
    The CPD metric satisfies Assumption \ref{assumption:main_metric} if $s > d/2$, with appropriate constants $B_1, B_2 > 0$ and $r_* > 0$.
\end{theorem}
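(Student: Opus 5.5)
The plan is to work entirely on the Fourier side. The symbol of $-\diver\circ\sym\nabla$ is $\|\xi\|^2I_d+\xi\xi^\top$, so that of $L_s$ is $(1+\sigma^2\|\xi\|^2)^s(\|\xi\|^2I_d+\xi\xi^\top)$. Inverting it, $\widehat{k_s}(\xi)=(1+\sigma^2\|\xi\|^2)^{-s}\|\xi\|^{-2}\bigl(I_d-\tfrac{\xi\xi^\top}{2\|\xi\|^2}\bigr)$. Its eigenvalues lie between $\tfrac12 m(\xi)$ and $m(\xi)$, where $m(\xi)=\|\xi\|^{-2}(1+\sigma^2\|\xi\|^2)^{-s}$.

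For a covector $\alpha$ with $\sum_i\alpha_i=0$, write $\hat\alpha(\xi)=\sum_i\alpha_i e^{\mathrm i\xi\cdot q_i}$. Then I would use the representation
\[
g_s^{-1}(\alpha,\alpha)=c_d\int_{\R^d}\hat\alpha(\xi)^*\,\widehat{k_s}(\xi)\,\hat\alpha(\xi)\dd\xi .
\]
This is the conditionally positive definite form, and it is insensitive to the additive ambiguity of $k_s$ (e.g.\ the logarithmic growth when $d=2$). The integral converges at $\xi=0$ because $\hat\alpha(0)=0$ gives $\|\hat\alpha(\xi)\|\leq\|\xi\|\sum_i\|\alpha_i\|\|q_i\|$. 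I expect the main obstacle to be justifying this identity rigorously, i.e.\ that the Green's function of \cite{joshi2026landmark} paired against sum-zero covectors equals this absolutely convergent integral. I would take it from the construction there, or derive it by regularising $m$ near the origin and passing to the limit with dominated convergence, using the quadratic vanishing of $\hat\alpha$. All covectors below ($\alpha\in T^*_q\Sreg$ and $\db\rho_{ij}$) have zero sum. Moreover, $\db\rho_{ij}$ is $SO(d)$-invariant, so it is a genuine quotient covector and its $g$-norm on $\Sreg$ is computed by the same formula.

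\emph{Lower comparison.} From $\|\xi\|^{-2}\geq\sigma^2(1+\sigma^2\|\xi\|^2)^{-1}$ I get $\widehat{k_s}\geq\tfrac{\sigma^2}{2}(1+\sigma^2\|\xi\|^2)^{-(s+1)}I_d$. Hence $g_s^{-1}\geq c\,g^{-1}_{k}$, where $k$ is the Mat\'ern kernel of $(\mathrm{Id}-\sigma^2\Delta)^{s+1}$; this is the content of Proposition~\ref{prop:cpd_lower_bound_matern}. This Mat\'ern kernel has $\widehat k(\xi)\geq A_2\|\xi\|^{-(2s+2)}$. Since $s+1>d/2+1$, it is $C^2$ at the origin, so $k(0)-k(r)\leq Cr^2\leq -A_1r^2\log r$ for small $r$. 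Thus it satisfies Assumption~\ref{asm:kernel hypothesis}, and Proposition~\ref{prop:scalar-verification} yields $g_s^{-1}\geq cM_1(1\land\delta)^{B_3}h^{-1}$ with $B_3=p_*-d$.

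\emph{Upper comparison.} I would split the integral at $\|\xi\|=1$. On $\|\xi\|\geq1$, I would use $m(\xi)\leq(1+\sigma^2\|\xi\|^2)^{-s}$ to bound that part by the Mat\'ern form of order $s$. That kernel is positive definite and bounded, so as in Proposition~\ref{prop:scalar-verification} the part is at most $nk(0)h^{-1}(\alpha,\alpha)$. On $\|\xi\|<1$, I would use $\|\hat\alpha(\xi)\|^2\leq\|\xi\|^2r_0^2\,h^{-1}(\alpha,\alpha)$ by Cauchy--Schwarz. This cancels the $\|\xi\|^{-2}$ and leaves $C\,r_0^2h^{-1}(\alpha,\alpha)$. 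Together the two parts give $g_s^{-1}\leq B_2(1\lor r_0)^2h^{-1}$, which is exactly where the $r_0$-growth in Assumption~\ref{assumption:main_metric} comes from.

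\emph{Collision speed.} With $u=(q_i-q_j)/\rho_{ij}$ and $\rho=\rho_{ij}$, I have $\db\rho_{ij}=(\dots,u,\dots,-u,\dots)$, and so
\[
g_s^{-1}(\db\rho_{ij},\db\rho_{ij})=2c_d\int\bigl(1-\cos(\xi\cdot(q_i-q_j))\bigr)\,u^\top\widehat{k_s}(\xi)u\dd\xi\leq C\int\min(\|\xi\|^2\rho^2,4)\,m(\xi)\dd\xi .
\]
For $\|\xi\|<1/\rho$, the integrand is at most $\rho^2(1+\sigma^2\|\xi\|^2)^{-s}$, and this is integrable over $\R^d$ since $2s>d$; this gives $C\rho^2$. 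For $\|\xi\|\geq1/\rho$, I would bound by $4\int_{\|\xi\|\geq1/\rho}\sigma^{-2s}\|\xi\|^{-2-2s}\dd\xi=C\rho^{2s+2-d}\leq C\rho^2$ for $\rho<1$. Hence $|\nabla_g\rho_{ij}|_g\leq C\rho_{ij}\leq C\rho_{ij}\sqrt{\log(er_*/\rho_{ij})}$ with $r_*=1$. This is even stronger than required, and it completes the verification of Assumption~\ref{assumption:main_metric}.
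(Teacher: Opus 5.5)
Your proposal is correct and follows the paper's route: the Fourier representation of $g_s^{-1}$ on sum-zero covectors, the Mat\'ern lower bound via $\|\xi\|^{-2}\geq\sigma^2(1+\sigma^2\|\xi\|^2)^{-1}$ fed into Proposition~\ref{prop:scalar-verification}, the low-frequency estimate $\|\hat\alpha(\xi)\|^2\leq\|\xi\|^2r_0^2h^{-1}(\alpha,\alpha)$ for the upper bound, and $|1-e^{\mathrm i\theta}|\leq|\theta|$ together with integrability of $(1+\sigma^2\|\xi\|^2)^{-s}$ for the collision speed. The differences are cosmetic: you split at $\|\xi\|=1$ for every $d\geq2$, whereas the paper needs this only for $d=2$ (for $d\geq3$ it integrates $\|\xi\|^{-2}$ directly and gets no $r_0$-growth), and your extra split at $\|\xi\|=1/\rho$ in the collision-speed estimate is unnecessary, since $\rho^2\int_{\R^d}(1+\sigma^2\|\xi\|^2)^{-s}\dd\xi$ already gives the bound.
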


\subsubsection{Fourier representation of cometric}

We will first write down the Fourier representation of our cometric $g_s^{-1}$. Before doing so, we will set up the convention for Fourier transforms. For $u\colon \mathbb{R}^d \to \mathbb{R}^d$, define its Fourier transform $\hat{u}\colon\mathbb{R}^d\to \mathbb{C}^d$ and its inverse Fourier transform by
\begin{equation*}
    \hat{u}(\xi) = \frac{1}{(2\pi)^{d/2}}\int_{\mathbb{R}^d} u(x) e^{-i x^\top \xi} \dd x, \quad 
    u(x) = \frac{1}{(2\pi)^{d/2}}\int_{\mathbb{R}^d} \hat{u}(\xi) e^{i x^\top \xi} \dd\xi.
\end{equation*}
In general, the definition should be understood in the sense of distributions.

We will now take the Fourier transform of both sides of the equation defining the Green's function
\begin{equation*}
    L_s k_s = \delta_0 I_d,
\end{equation*}
where $\delta_0$ is the Dirac delta centered at $x=0$ and $I_d$ is the $d\times d$ identity matrix. Calculating the Fourier transform directly, we obtain
\begin{align*}
    \widehat{\diver\circ \sym{\nabla u}}(\xi)_j
    &= \sum_{i}i\xi_i(i\xi_i \hat{u}_j(\xi) + i\xi_j \hat{u}_i(\xi)) \\
    &= -\sum_i \xi_i^2 \hat{u}_j(\xi) - \sum_i \xi_i \xi_j \hat{u}_i(\xi).
\end{align*}
Therefore, we have $\widehat{\diver\circ \sym{\nabla u}}(\xi) = -\left(\|\xi\|^2 I_d + \xi \xi^\top\right) \hat{u}(\xi)$. Thus, the Fourier transform of the Green's function satisfies
\begin{equation*}
    \widehat{L_s k_s}(\xi) = (1+\sigma^2 \|\xi\|^2)^{s} \left(\|\xi\|^2 I_d + \xi \xi^\top\right) \hat{k}_s(\xi) = \frac{1}{(2\pi)^{d/2}} I_d.
\end{equation*}
To calculate the inverse of the matrix multiplying $\hat{k_s}(\xi)$, we notice that the matrix $\|\xi\|^2 I_d + \xi \xi^{\top}$ can be decomposed as the sum of projections into orthogonal spaces as follows:
\begin{equation*}
\|\xi\|^2 I_d + \xi \xi^{\top} = \|\xi\|^2 \left(I_d - \frac{\xi \xi^{\top}}{\|\xi\|^2} + 2\frac{\xi \xi^{\top}}{\|\xi\|^2}\right) = \|\xi\|^2(P_{\perp}(\xi) + 2 P_{||}(\xi)),    
\end{equation*}
where $P_{||}(\xi)$ and $P_{\perp}(\xi)$ are the projections parallel and perpendicular to the vector $\xi$, respectively, that is,
\begin{equation*}
    P_{||}(\xi) = \frac{\xi \xi^{\top}}{\|\xi\|^2}, \quad P_{\perp}(\xi) = I_d - \frac{\xi \xi^{\top}}{\|\xi\|^2}.
\end{equation*}
Therefore, $\hat{k}_s(\xi)$ can be written as 
\begin{align*}
    \hat{k}_s(\xi) &= \frac{1}{(2\pi)^{d/2} (1+\sigma^2 \|\xi\|^2)^{s}\|\xi\|^2} \left(P_{\perp}(\xi) + \frac{1}{2} P_{||}(\xi)\right).
\end{align*}
We remark that this is a formal calculation and rigorously there is an issue at $\xi=0$. In particular, after the polar coordinate transformation, the singularity $\frac{1}{\|\xi\|^2}$, multiplied with the polar Jacobian $r^{d-1}$, turns into $r^{d-3}$, and thus $\hat{k}_s(\xi)$ is not integrable in a neighborhood of the origin for $d = 2$. Therefore, we need to regularize the function and interpret the inverse Fourier transform in the sense of tempered distributions.

In the following justification, we consider $d=2$. Moreover, denote the formal Fourier transform as $M(\xi)$. Recall that a \emph{tempered distribution} in our case is a continuous linear functional on the Schwartz space of rapidly decreasing smooth matrix-valued functions $\mathcal{S}(\mathbb{R}^2, \mathbb{C}^{2\times 2})$.

A \emph{regularization} of a function $f\colon \mathbb{R}^2 \to \mathbb{R}^{2\times 2}$ locally integrable on $\mathbb{R}^2 \setminus \{x_0\}$ for $x_0 \in \mathbb{R}^2$ is a tempered distribution $F$ such that $\langle F, \phi \rangle = \int_{\mathbb{R}^2} f(x) \phi(x) \dd x$ for all test functions $\phi \in \mathcal{S}(\mathbb{R}^2, \mathbb{R}^{2\times 2})$ for which the integral exists in the Lebesgue sense \cite[p.~54]{EstradaKanwal1994}. As Estrada and Kanwal \cite{EstradaKanwal1994} discuss, for the algebraic singularity we have, the regularization of $M(\xi)$, denoted $\hat{k}_s$, can be obtained by
\begin{align*}
    \langle \hat{k}_s, \phi \rangle &= \int_{\|\xi\|\leq 1} M(\xi) \cdot (\phi(\xi) - \phi(0)) \dd\xi + \int_{\|\xi\|> 1} M(\xi)\cdot  \phi(\xi) \dd\xi.
\end{align*}
Here, the notation $\cdot$ denotes the Frobenius inner product between matrices, i.e., $A \cdot B = \mathrm{tr}(A^{\top} B)$. We remark that the algebraic singularity will be canceled by the first order Taylor term exposed by the difference $\phi(\xi) - \phi(0)$. We will now retroactively define $k_s$ as the distributional inverse Fourier transform of $\hat{k}_s$, that is,
\begin{equation*}
    \langle k_s, \phi \rangle = \langle \hat{k}_s, \hat{\phi}(-\cdot) \rangle \quad \textrm{for all } \phi \in \mathcal{S}(\mathbb{R}^2, \mathbb{R}^{2\times 2}),
\end{equation*}
where $\hat{\phi}(-\cdot)$ denotes the Fourier transform of $\phi$ evaluated at $-\xi$. It is straightforward to show that the distribution $k_s$ defined in this way satisfies the defining equation $L_s k_s = \delta_0 I_d$ after taking the Fourier transform, which is valid because Fourier transform is bijective on the space of tempered distributions. Now, the $k_s$ defined in this way can be represented as a continuous matrix-valued function $\mathbb{R}^2 \to \mathbb{R}^{2\times 2}$, because for a smooth bump function $\chi\colon \mathbb{R}^2\to \mathbb{R}$ compactly supported around $0$, we can write 
\begin{equation*}
    \hat{k}_s = \chi \hat{k}_s + (1-\chi) \hat{k}_s.
\end{equation*}
The first term is a compactly supported distribution, and so its inverse Fourier transform has a representation as an entire function, which is $x\mapsto (2\pi)^{-1}[\langle \chi \hat{k}_s, e^{i\xi^{\top} x} E_{ij} \rangle]_{i,j=1}^d$, see~\cite[Theorem 7.1.14]{Hormander1990}. Here, $E_{ij}$ is the matrix with $1$ in the $(i,j)$-th entry and $0$ elsewhere. The second term is supported away from the singularity $0$, so it is represented as an integrable function $(1-\chi(\xi))M(\xi)$. Therefore, the inverse Fourier function of $\hat{k}_s$ can be written as the sum of an entire function and a continuous function, which shows that $k_s$ is indeed a continuous matrix-valued function of the following representation elementwise:
\begin{align}
    \label{eq:ks-continuous-representation-d2}
    k_s(x)_{ij} &= (2\pi)^{-1}\left[\langle \chi \hat{k}_s, e^{i\xi^{\top} x} E_{ij} \rangle + \int_{\mathbb{R}^2} (1-\chi(\xi)) M(\xi)_{ij} e^{i\xi^{\top} x} \dd\xi\right]\\ \nonumber
    &= (2\pi)^{-1}\left[\int_{\|\xi\|\leq 1} M(\xi)_{ij}(e^{i\xi^{\top}x} - 1)\dd\xi +  \int_{\|\xi\|> 1} M(\xi)_{ij} e^{i\xi^{\top} x} \dd\xi \right].
\end{align}

We now go back to the $d \geq 3$ case. In such a case, $M(\xi)$ is well-behaved and thus regularization is not needed to write the inverse Fourier transform directly as
\begin{equation}
    k_s(x) = \frac{1}{(2\pi)^{d/2}} \int_{\mathbb{R}^d} M(\xi) e^{i\xi^{\top} x} \dd\xi.
\end{equation}
We first derive the Fourier representation of the cometric in this case. Recall that our cometric is defined by 
\begin{equation*}
    g_s^{-1}(\alpha,\beta) = \sum_{i,j=1}^{n}\alpha_i^{\top} k_s(q_i-q_j) \beta_j.
\end{equation*}
Now, we would like to substitute the inverse Fourier transform into $k_s$. We obtain
\begin{align*}
 g_{s}^{-1}(\alpha,\beta) &= \sum_{i,j=1}^{n}\alpha_i^{\top} \frac{1}{(2\pi)^{d/2}}\int_{\mathbb{R}^d}M(\xi)e^{i\xi^{\top}(q_i-q_j)} \beta_j \dd\xi \\ &= \frac{1}{(2\pi)^{d/2}}\int_{\mathbb{R}^d}\left(\sum_{i=1}^{n}\alpha_ie^{-i\xi^{\top}q_i}\right)^{*} M(\xi) \left(\sum_{j=1}^{n}\beta_j e^{-i\xi^{\top}q_j}\right)\dd\xi.
\end{align*}
Here, $*$ represents the complex conjugate transpose. Define $F_\alpha(\xi) = \sum_{i=1}^{n}\alpha_i e^{-i\xi^{\top}q_i}$ and $F_\beta(\xi) = \sum_{j=1}^{n}\beta_j e^{-i\xi^{\top}q_j}$. Then we can write
\begin{equation}
    \label{eq:fourier_cometric_cpd}
    g_{s}^{-1}(\alpha,\beta) = \frac{1}{(2\pi)^{d/2}}\int_{\mathbb{R}^d} F_\alpha(\xi)^{*} M(\xi) F_\beta(\xi) \dd\xi.
\end{equation}

We will now consider the case $d=2$. We cannot use the naive inverse Fourier formula directly because it diverges and we need to use the regularized version \eqref{eq:ks-continuous-representation-d2}, but this does not change the final result since the contribution from the $-1$ term vanishes due to

\[
\begin{gathered}
\sum_{i,j=1}^{n}\alpha_i^{\top}
\int_{|\xi|\leq 1}
M(\xi)
\bigl(e^{i\xi^{\top}(q_i-q_j)}-1\bigr)
\beta_j\dd\xi
\\[6pt]
= 
\int_{\|\xi\|\leq 1}
\left(
  \sum_{i=1}^{n}\alpha_i e^{-i\xi^{\top}q_i}
\right)^{*}
M(\xi)
\left(
  \sum_{j=1}^{n}\beta_j e^{-i\xi^{\top}q_j}
\right)\dd\xi
\\[6pt] - \sum_{i=1}^{n}\alpha_i^{\top} \int_{|\xi|\leq 1} M(\xi) \sum_{j=1}^{n}\beta_j \dd\xi.
\end{gathered}
\]
Here, $\sum_{i=1}^{n}\alpha_i = 0$ and $\sum_{j=1}^{n}\beta_j = 0$ because of the centering assumption. Therefore, we obtain \eqref{eq:fourier_cometric_cpd}. We will use the Fourier representation \eqref{eq:fourier_cometric_cpd} of the cometric $g_{s}^{-1}(\alpha,\beta)$ in our subsequent analysis.

\subsubsection{Verifying the cometric comparison}
We will first prove the upper bound for the cometric comparison.

\begin{proposition}
    \label{prop:pd_cometric_upper_bound}
    For the CPD metric, if $s > d/2$, there exists a constant $B_2 > 0$ such that, for any covectors $\alpha$, 
    \begin{equation*}
        g_{s}^{-1}(\alpha,\alpha) \leq B_2 (1\lor r_0)^2 h^{-1}(\alpha, \alpha).
    \end{equation*}
\end{proposition}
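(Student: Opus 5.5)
We prove the upper bound directly from the Fourier representation \eqref{eq:fourier_cometric_cpd}. Since $P_\perp(\xi)+\tfrac12P_{||}(\xi)\preceq I_d$, we have
\begin{equation*}
    g_s^{-1}(\alpha,\alpha)\leq \frac{1}{(2\pi)^{d/2}}\int_{\mathbb{R}^d}\frac{\|F_\alpha(\xi)\|^2}{(1+\sigma^2\|\xi\|^2)^s\|\xi\|^2}\dd\xi .
\end{equation*}
We split the integral at $\|\xi\|=1$. The only danger is the factor $\|\xi\|^{-2}$ near the origin, and there the centering condition $\sum_i\alpha_i=0$ will cancel it.

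\textbf{High frequencies.} For $\|\xi\|>1$ we use the crude bound $\|F_\alpha(\xi)\|\leq\sum_i\|\alpha_i\|\leq\sqrt n\,\|\alpha\|$, so that $\|F_\alpha\|^2\leq n\,h^{-1}(\alpha,\alpha)$. The remaining weight $(1+\sigma^2\|\xi\|^2)^{-s}\|\xi\|^{-2}$ is integrable on $\{\|\xi\|>1\}$, because $2s+2>d$ (indeed $2s>d$ already suffices). This piece is therefore bounded by a constant times $h^{-1}(\alpha,\alpha)$, which is at most a constant times $(1\lor r_0)^2h^{-1}(\alpha,\alpha)$.

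\textbf{Low frequencies.} Using $\sum_i\alpha_i=0$, write
\begin{equation*}
    F_\alpha(\xi)=\sum_i\alpha_i\bigl(e^{-i\xi^\top q_i}-1\bigr).
\end{equation*}
Since $|e^{-i\xi^\top q_i}-1|\leq\|\xi\|\|q_i\|$, Cauchy--Schwarz gives
\begin{equation*}
    \|F_\alpha(\xi)\|\leq\|\xi\|\sum_i\|\alpha_i\|\|q_i\|\leq\|\xi\|\,r_0\,\|\alpha\|.
\end{equation*}
Hence $\|F_\alpha\|^2/\|\xi\|^2\leq r_0^2\,h^{-1}(\alpha,\alpha)$, and the integral over the unit ball contributes at most $\mathrm{vol}(B_1)\,r_0^2\,h^{-1}(\alpha,\alpha)$.

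\textbf{Conclusion and care points.} Adding the two pieces gives $g_s^{-1}\leq C(1+r_0^2)h^{-1}\leq 2C(1\lor r_0)^2h^{-1}$, which is the claim. Two points need care, and the second is the main obstacle.
\begin{enumerate}
    \item Here $h^{-1}(\alpha,\alpha)=\sum_i\|\alpha_i\|^2$ for quotient covectors realized in the annihilator. This holds because the Riemannian submersion identifies $T^*\Sreg$ isometrically with horizontal covectors.
    \item For $d=2$, the Fourier representation \eqref{eq:fourier_cometric_cpd} rests on the regularized formula \eqref{eq:ks-continuous-representation-d2}, and the paper has already shown that the correction term vanishes for centered covectors. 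The integrand is absolutely integrable near $0$ precisely because of the bound $\|F_\alpha\|\lesssim\|\xi\|$. So the same estimate applies in $d=2$, with no separate treatment needed.
\end{enumerate}
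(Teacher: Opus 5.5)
Your proof is correct and follows the paper's argument: the operator-norm bound on $M(\xi)$, the two estimates $\|F_\alpha(\xi)\|^2\leq n\,h^{-1}(\alpha,\alpha)$ and $\|F_\alpha(\xi)\|^2\leq\|\xi\|^2r_0^2\,h^{-1}(\alpha,\alpha)$ (the second coming from the centering $\sum_i\alpha_i=0$), and a split of the frequency integral at $\|\xi\|=1$. The paper uses that split only for $d=2$; for $d\geq3$ it integrates the crude bound over all of $\mathbb{R}^d$, which gives a constant independent of $r_0$, whereas you apply the split for every $d$. Dropping one factor $(2\pi)^{-d/2}$ in your first display only weakens the bound and is harmless.
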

\begin{proof}
    We note that, by the Fourier representation \eqref{eq:fourier_cometric_cpd}, we have
    \begin{equation*}
        g_{s}^{-1}(\alpha,\alpha) \leq \frac{1}{(2\pi)^{d/2}}\int_{\mathbb{R}^d} |F_\alpha(\xi)^{*} M(\xi) F_\alpha(\xi)| \dd\xi.
    \end{equation*}
    Here, $M(\xi)$ is the formal Fourier transform of $k_s(\xi)$ introduced in the previous proposition.  By Cauchy-Schwarz, we obtain
    \begin{align*}
        |F_\alpha(\xi)^{*} M(\xi) F_\alpha(\xi)| \leq \|F_\alpha(\xi)\|  \left\|M(\xi) F_\alpha(\xi)\right\| \leq \|F_\alpha(\xi)\|^2  \left\|M(\xi)\right\|_{op}.
    \end{align*}
    Here, $\|\cdot\|_{op}$ represents the operator norm of a matrix, defined as $\|A\|_{op} = \max_{\|v\|\leq 1} \|Av\|$.
    We note that $M(\xi)$ projects each vector onto two orthogonal components, so for any $v\in \mathbb{R}^d$ with $\|v\|\leq 1$, the norm of $M(\xi)v$ can be calculated as follows:
    \begin{align*}
        \|M(\xi)v\| &= \frac{\sqrt{\|P_{\perp}(\xi)v\|^2 + \frac{1}{4}\|P_{||}(\xi)v\|^2}}{(2\pi)^{d/2}(1+\sigma^2\|\xi\|^2)^{s}\|\xi\|^2} \\
        &\leq \frac{\sqrt{\|P_{\perp}(\xi)v\|^2 + \|P_{||}(\xi)v\|^2}}{(2\pi)^{d/2}(1+\sigma^2\|\xi\|^2)^{s}\|\xi\|^2} \\
        &= \frac{1}{(2\pi)^{d/2}(1+\sigma^2\|\xi\|^2)^{s}\|\xi\|^2} \sqrt{\|v\|^2} \notag\\
        &\leq \frac{1}{(2\pi)^{d/2}(1+\sigma^2\|\xi\|^2)^{s}\|\xi\|^2}.
    \end{align*}
    Therefore,  $\|M(\xi)\|_{op} =  \max_{\|v\|\leq 1} \|M(\xi)v\| \leq \frac{1}{(2\pi)^{d/2}(1+\sigma^2\|\xi\|^2)^{s}\|\xi\|^2}$. On the other hand, the norm $\|F_\alpha(\xi)\|^2$ can be bounded as follows by Cauchy-Schwarz:
    \begin{align*}
        \|F_\alpha(\xi)\|^2 &\leq  n\sum_{i=1}^{n}\|\alpha_i e^{-i\xi^{\top}q_i}\|^2 = n\sum_{i=1}^{n}\|\alpha_i\|^2 = nh^{-1}(\alpha, \alpha).
    \end{align*}
    We can use the fact that $\alpha_i$ are centered, i.e., $\sum_{i}\alpha_i=0$ to provide further bounds for $\|F_\alpha(\xi)\|^2$. By another application of Cauchy-Schwarz, we obtain
    \begin{align*}
        \|F_\alpha(\xi)\|^2 &= \left\|\sum_{i=1}^{n} \alpha_i e^{-i\xi^{\top}q_i}\right\|^2 \notag\\
        &= \left\|\sum_{i=1}^{n} \alpha_i \left(e^{-i\xi^{\top}q_i} - 1\right)\right\|^2 \notag\\
        &\leq \left(\sum_{i=1}^{n} \|\alpha_i\|^2\right) \left(\sum_{i=1}^{n} |e^{-i\xi^{\top}q_i} - 1|^2\right).
    \end{align*}
    By using the inequality $|e^{-i\xi^{\top}q_i} - 1| \leq |\xi^{\top}q_i| \leq \|\xi\| \|q_i\|$, we obtain
    \begin{align*}
        \sum_{i=1}^{n} |e^{-i\xi^{\top}q_i} - 1|^2 \leq \sum_{i=1}^{n} \|\xi\|^2 \|q_i\|^2 = \|\xi\|^2 \sum_{i=1}^{n} \|q_i\|^2 = \|\xi\|^2 r_0([q])^2.
    \end{align*}
    Therefore, we have
    \begin{align*}
        \|F_\alpha(\xi)\|^2 \leq h^{-1}(\alpha, \alpha) \|\xi\|^2 r_0([q])^2.
    \end{align*}
    Combining the two bounds, we obtain
    \begin{equation*}
        \|F_\alpha(\xi)\|^2 \leq \min\{n, \|\xi\|^2r_0([q])^2\}h^{-1}(\alpha, \alpha).
    \end{equation*}
    We now consider the following two cases.
    \paragraph{Case 1. $d \geq 3$.} Using our first bound, we get
    \begin{align*}
        g_s ^{-1}(\alpha,\alpha) &\leq  nh^{-1}(\alpha, \alpha) \int_{\mathbb{R}^d} \frac{1}{(2\pi)^{d}(1+\sigma^2\|\xi\|^2)^{s}\|\xi\|^2} \dd\xi.
    \end{align*}
    We will now consider the polar coordinates to evaluate the integral. Let $r = \|\xi\|$ and $\db\xi = r^{d-1} \dd r \dd\Omega$, where $\db\Omega$ represents the surface measure on the unit sphere $\mathbb{S}^{d-1}$ in $\mathbb{R}^d$. Then the integral becomes
    \begin{align*}
        &\int_{\mathbb{R}^d} \frac{1}{(2\pi)^{d}(1+\sigma^2\|\xi\|^2)^{s}\|\xi\|^2} \dd\xi \notag\\
        &\quad= \int_{0}^{\infty} \int_{\mathbb{S}^{d-1}} \frac{r^{d-1}}{(2\pi)^{d}(1+\sigma^2 r^2)^{s} r^2} \dd\Omega \dd r \\
        &\quad= \frac{\Omega(\mathbb{S}^{d-1})}{(2\pi)^{d}} \int_{0}^{\infty} \frac{r^{d-3}}{(1+\sigma^2 r^2)^{s}} \dd r.
    \end{align*}
    Around $r=0$, the integral behaves like $r^{d-3}$ which is integrable when $d-3 > -1$, i.e., $d > 2$. Around $r=\infty$, the integral behaves like $r^{d-3-2s}$  which is integrable when $d-3-2s < -1$, i.e., $2s > d-2$. Therefore, the integral converges for $d > 2$ and $2s > d-2$, which is covered by our assumption on $d$ and $s$. In particular, the comparison upper bound with $B_2=\frac{n\Omega(\mathbb{S}^{d-1})}{(2\pi)^d}\int_0^\infty \frac{r^{d-3}}{(1+\sigma^2r^2)^{s}}\dd r$ holds. 
    \paragraph{Case 2. $d=2$.} We note that the previous argument breaks down due to the behavior of the integral around $r=0$. We use our unified bound in this case to avoid singularity. By the unified bound, we have  
    \begin{align*}
    g_s^{-1}(\alpha,\alpha) \leq h^{-1}(\alpha, \alpha) \int_{\mathbb{R}^2} \frac{\min\{n, \|\xi\|^2 r_0([q])^2\}}{(2\pi)^{2}(1+\sigma^2\|\xi\|^2)^{s}\|\xi\|^2} \dd\xi.
    \end{align*}
    We again switch to polar coordinates, setting $r = \|\xi\|$ and $\db\xi = r \dd r \dd\Omega$, where $\db\Omega$ represents the surface measure on the unit circle $\mathbb{S}^{1}$ in $\mathbb{R}^2$. The integral becomes
    \begin{align*}
        &\int_{\mathbb{R}^2} \frac{\min\{n, \|\xi\|^2 r_0([q])^2\}}{(2\pi)^{2}(1+\sigma^2\|\xi\|^2)^{s}\|\xi\|^2} \dd\xi \\
        &\quad= \int_{0}^{\infty} \int_{0}^{2\pi} \frac{\min\{n, r^2 r_0([q])^2\}}{(2\pi)^{2}(1+\sigma^2 r^2)^{s} r^2} r \dd\theta \dd r \\
        &\quad= \frac{1}{2\pi} \int_{0}^{\infty} \frac{\min\{n, r^2 r_0([q])^2\}}{(1+\sigma^2 r^2)^{s} r} \dd r.
    \end{align*}
    We split the integral into two regions: $(0, 1]$ and $(1, \infty)$ to handle the behavior around $r=0$ and $r=\infty$ separately. On $(0,1)$, the integral is bounded by 
    \begin{equation*}
        \frac{1}{2\pi} \int_{0}^{1} \frac{r^2 r_0([q])^2}{(1+\sigma^2 r^2)^{s} r} \dd r \leq \frac{r_0([q])^2}{2\pi} \int_{0}^{1} r \dd r = \frac{r_0([q])^2}{4\pi}.
    \end{equation*}
    The $(1, \infty)$ integral is finite for $s > 0$ by the same argument as in the $d\geq 3$ case. Therefore, the integral converges for $d=2$ and $s>0$.
    Overall, we obtain the bound
    \begin{equation*}
        \begin{gathered}
        g_s^{-1}(\alpha, \alpha) \leq B_2(1\lor r_0)^2 h^{-1}(\alpha, \alpha),\\
        B_2 = \frac{1}{4\pi} + \frac{1}{2\pi}\int_{1}^{\infty} \frac{n}{(1+\sigma^2 r^2)^{s} r} \dd r.
        \end{gathered}
    \end{equation*}
    \end{proof}
    We will now proceed to show the lower bound for the CPD metric. The main idea is that the CPD metric is controlled from below by the metric induced by the Mat\'ern kernel, which is one of the scalar-kernel metrics covered by \cite{habermann2026stochasticcompletenesslandmarkspace}.
    \begin{proposition}
        \label{prop:cpd_lower_bound_matern}
        For every quotient covector $\alpha$, the CPD cometric is bounded from below by the scalar-kernel cometric associated with the Sobolev operator $(\mathrm{Id}-\sigma^2\Delta)^{s+1}$, namely,
        \begin{equation*}
            g_s^{-1}(\alpha, \alpha) \geq \frac{\sigma^2}{2} \sum_{i,j=1}^{n} \alpha_i^{\top} \alpha_j k_{\rm Matern}(q_i - q_j),
        \end{equation*}
        where $k_{\rm Matern}$ is the Green's function to the operator $(\mathrm{Id}-\sigma^2\Delta)^{s+1}$, so that $(\mathrm{Id}-\sigma^2\Delta)^{s+1}k_{\rm Matern}=\delta_0$. With our Fourier convention, we have $\widehat{\delta_0}=(2\pi)^{-d/2}$, and hence
        \begin{equation*}
            \widehat{k}_{\rm Matern}(\xi) = \frac{1}{(2\pi)^{d/2}}(1+\sigma^2\|\xi\|^2)^{-(s+1)}.
        \end{equation*}
        Taking the inverse Fourier transform gives
        \begin{equation*}
            k_{\rm Matern}(x) = \frac{1}{(2\pi)^d}\int_{\mathbb{R}^d} (1+\sigma^2\|\xi\|^2)^{-(s+1)} e^{i \xi ^\top x} \dd\xi.
        \end{equation*}
    \end{proposition}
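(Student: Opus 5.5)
The plan is to compare the two cometrics directly through their Fourier representations, as a pointwise inequality between the Fourier multipliers. By \eqref{eq:fourier_cometric_cpd},
\[
g_s^{-1}(\alpha,\alpha)=\frac{1}{(2\pi)^{d/2}}\int_{\mathbb{R}^d}F_\alpha(\xi)^*M(\xi)F_\alpha(\xi)\dd\xi .
\]
This holds for $d\geq3$ directly, and for $d=2$ via the regularization, where the $-1$ term drops out because $\sum_i\alpha_i=0$. In exactly the same way, substituting the inverse Fourier formula for $k_{\rm Matern}$ (whose transform is integrable since $s+1>d/2$) gives
\[
\sum_{i,j}\alpha_i^\top\alpha_j k_{\rm Matern}(q_i-q_j)=\frac{1}{(2\pi)^d}\int_{\mathbb{R}^d}\frac{\|F_\alpha(\xi)\|^2}{(1+\sigma^2\|\xi\|^2)^{s+1}}\dd\xi .
\]
No regularization is needed here. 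With this, the inequality reduces to a comparison of the two integrands for each $\xi\neq0$.

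For the pointwise step, write $F=F_\alpha(\xi)\in\mathbb{C}^d$. Since $P_\perp+\tfrac12P_{||}\succeq\tfrac12 I_d$ as Hermitian forms, we get
\[
F^*M(\xi)F\geq\frac{\|F\|^2}{2(2\pi)^{d/2}(1+\sigma^2\|\xi\|^2)^s\|\xi\|^2}.
\]
Next, use the elementary bound $\|\xi\|^2\leq\sigma^{-2}(1+\sigma^2\|\xi\|^2)$. This gives
\[
\frac{1}{\|\xi\|^2(1+\sigma^2\|\xi\|^2)^s}\geq\frac{\sigma^2}{(1+\sigma^2\|\xi\|^2)^{s+1}}.
\]
Combining the two inequalities, multiplying by $(2\pi)^{-d/2}$ and integrating over $\xi$ yields
\[
g_s^{-1}(\alpha,\alpha)\geq\frac{\sigma^2}{2}\,\frac{1}{(2\pi)^d}\int\frac{\|F_\alpha\|^2}{(1+\sigma^2\|\xi\|^2)^{s+1}}\dd\xi ,
\]
which is the claim. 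The point $\xi=0$ is a null set and is irrelevant.

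The main obstacle is justifying the Fourier representation of $g_s^{-1}$ as a genuine absolutely convergent integral when $d=2$, so that the pointwise comparison can legitimately be integrated. I would handle this using $\|F_\alpha(\xi)\|^2\leq\|\xi\|^2r_0^2h^{-1}(\alpha,\alpha)$ from the proof of Proposition~\ref{prop:pd_cometric_upper_bound}. This shows that the integrand is $O(1)$ near the origin in $d=2$, so the integral is absolutely convergent and the regularized representation \eqref{eq:ks-continuous-representation-d2} agrees with it, as already argued before that proposition. Since the integrand is nonnegative everywhere, the comparison then passes through the integral without further issue.
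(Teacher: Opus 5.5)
Your proposal is correct and follows the paper's proof essentially step for step. Both arguments use the Loewner bound $P_\perp+\tfrac12P_{||}\succeq\tfrac12I_d$ and the inequality $\sigma^2\|\xi\|^2\leq1+\sigma^2\|\xi\|^2$, then integrate against $F_\alpha$ in the Fourier representation \eqref{eq:fourier_cometric_cpd}. Your added check that the $d=2$ integrand is absolutely integrable, via $\|F_\alpha(\xi)\|^2\leq\|\xi\|^2r_0^2h^{-1}(\alpha,\alpha)$ near the origin, is extra rigor that the paper leaves to its earlier regularization discussion.
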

    \begin{proof}
        Remember that the Fourier representation of the cometric $g_s^{-1}$ is defined by the following formal Fourier transform of $k_s$:
        \begin{align*}
        M(\xi) &= \frac{1}{(2\pi)^{d/2} (1+\sigma^2 \|\xi\|^2)^{s}\|\xi\|^2} \left(P_{\perp}(\xi) + \frac{1}{2} P_{||}(\xi)\right).
        \end{align*}
        Now consider the order $\geq$ of matrices where $A\geq B$ if and only if $A-B$ is positive semidefinite. Under this order, for $\xi\neq 0$, we have
        \begin{align*}
            M(\xi) & \geq \frac{1}{2(2\pi)^{d/2} (1+\sigma^2 \|\xi\|^2)^{s}\|\xi\|^2} \left(P_{\perp}(\xi) +  P_{||}(\xi)\right) \\ &\geq \frac{\sigma^2}{2(2\pi)^{d/2} (1+\sigma^2 \|\xi\|^2)^{s}\sigma^2\|\xi\|^2} I_d \\ &\geq \frac{\sigma^2}{2(2\pi)^{d/2} (1+\sigma^2 \|\xi\|^2)^{s}(1+\sigma^2\|\xi\|^2)} I_d \\ &= \frac{\sigma^2}{2(2\pi)^{d/2}}(1+\sigma^2\|\xi\|^2)^{-(s+1)} I_d.
        \end{align*}
        Therefore, by multiplying on the left by $F_\alpha(\xi)^*$ and on the right by $F_\alpha(\xi)$, and integrating with the prefactor $(2\pi)^{-d/2}$ from \eqref{eq:fourier_cometric_cpd}, we obtain
        \begin{align*}
            g_s^{-1}(\alpha, \alpha) \geq \frac{\sigma^2}{2}\sum_{i,j=1}^{n} \alpha_i^{\top} \alpha_j \left(\frac{1}{(2\pi)^d}\int_{\mathbb{R}^d} (1+\sigma^2\|\xi\|^2)^{-(s+1)} e^{i \xi^\top (q_i - q_j)} \dd\xi \right).
        \end{align*}
        Substituting the definition of $k_{\rm Matern}$, we obtain the proposition. 
    \end{proof}

    As discussed in the previous work \cite{habermann2026stochasticcompletenesslandmarkspace}, the scalar kernel $k_{\rm Matern}$ is one of the kernels satisfying Assumption \ref{asm:kernel hypothesis}, so by using the lower bound for the scalar kernel metric, Proposition \ref{prop:scalar-verification}, we obtain the desired lower bound for the CPD metric.
    \begin{corollary}
        There exist constants $B_1>0$ and $B_3>0$ so that for any covectors $\alpha$, we satisfy the lower bound of Assumption \ref{assumption:main_metric}, that is,
        \begin{equation*}
            B_1 (1\land \delta([q]))^{B_3} h^{-1}(\alpha, \alpha) \leq g_s^{-1}(\alpha, \alpha).
        \end{equation*}
    \end{corollary}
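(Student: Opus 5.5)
The plan is to chain Proposition~\ref{prop:cpd_lower_bound_matern} with the scalar lower bound of Proposition~\ref{prop:scalar-verification}, applied to the Mat\'ern kernel. Write $k_{\rm M}(r)$ for the radial profile of $k_{\rm Matern}$, so that $k_{\rm Matern}(x)=k_{\rm M}(\|x\|)$; this is legitimate because its Fourier transform $(2\pi)^{-d/2}(1+\sigma^2\|\xi\|^2)^{-(s+1)}$ is radial. Proposition~\ref{prop:cpd_lower_bound_matern} then reads
\begin{equation*}
    g_s^{-1}(\alpha,\alpha)\geq \frac{\sigma^2}{2}\, g_{k_{\rm M}}^{-1}(\alpha,\alpha),
\end{equation*}
where $g_{k_{\rm M}}^{-1}$ is the scalar-kernel cometric \eqref{eq:scalar-cometric}. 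This inequality holds for every quotient covector $\alpha$, since it was derived pointwise in $\xi$ from the Fourier representation \eqref{eq:fourier_cometric_cpd}.

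The second step is to check that $k_{\rm M}$ satisfies Assumption~\ref{asm:kernel hypothesis}; the paper cites \cite{habermann2026stochasticcompletenesslandmarkspace} for this.
\begin{enumerate}
    \item The Fourier transform is positive and continuous.
    \item It satisfies $\widehat{k}_{\rm M}(\xi)\geq A_2\|\xi\|^{-p}$ for large $\|\xi\|$ with $p=2(s+1)$ and $A_2=(2\pi)^{-d/2}(2\sigma^2)^{-(s+1)}$, using $1+\sigma^2\|\xi\|^2\leq 2\sigma^2\|\xi\|^2$ for $\|\xi\|\geq 1/\sigma$.
    \item The small-distance condition $k_{\rm M}(0)-k_{\rm M}(r)\leq -A_1r^2\log r$ holds. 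Smoothness away from $0$ follows from the Bessel-function representation.
\end{enumerate}
Only the lower comparison of Proposition~\ref{prop:scalar-verification} is actually needed here, and its proof uses only positivity and continuity of $\widehat k$ together with the high-frequency lower bound. So even if there were doubt about the small-$r$ condition, the argument goes through with items 1 and 2 alone.

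Finally, fix $p_*>\max\{2(s+1),d\}$ and apply Proposition~\ref{prop:scalar-verification}. This gives $g_{k_{\rm M}}^{-1}(\alpha,\alpha)\geq M_1(1\land\delta)^{p_*-d}h^{-1}(\alpha,\alpha)$. Combining with the first display yields the claim with $B_1=\sigma^2M_1/2>0$ and $B_3=p_*-d>0$.

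The main point of care is that Theorem~4.1 of \cite{habermann2026stochasticcompletenesslandmarkspace} is a statement about covectors on the full landmark space. Quotient covectors are a subset of these, namely centered covectors satisfying the symmetry constraint. The inequality therefore restricts to them, and $h^{-1}$ on the quotient is just the Euclidean norm restricted to that subspace. Proposition~\ref{prop:scalar-verification} was already stated for quotient covectors, so no further work is required.
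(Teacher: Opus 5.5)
Your proposal is correct and follows essentially the same route as the paper: combine the pointwise-in-$\xi$ comparison of Proposition~\ref{prop:cpd_lower_bound_matern} with the lower comparison of Proposition~\ref{prop:scalar-verification} for the Mat\'ern kernel, giving $B_1=\sigma^2M_1/2$ and $B_3=p_*-d$. The paper simply cites prior work for the fact that the Mat\'ern kernel satisfies Assumption~\ref{asm:kernel hypothesis}. Your direct check of positivity and continuity of $\widehat k$ and of the high-frequency bound with $p=2(s+1)$, together with your remark that the small-distance condition is not needed for the lower bound, fills in what the paper leaves to that citation.
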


    \subsubsection{Verifying the collision speed control}
    
    We will now show the collision speed control condition in Assumption \ref{assumption:main_metric} holds.
    \begin{theorem} For the CPD metric, if $s > d/2$, there exists a constant $B_2 > 0$ and $r_*>0$ such that the collision speed control condition in Assumption \ref{assumption:main_metric} holds, that is, if $0<\rho_{ij}<r_*$, then
        \begin{equation*}
            |\nabla_{g_s} \rho_{ij}|_{g_s} \leq B_2 \rho_{ij}\sqrt{\log{\left(\frac{e r_*}{\rho_{ij}}\right)}}.
        \end{equation*}
    \end{theorem}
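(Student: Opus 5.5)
The plan is to compute $|\nabla_{g_s}\rho_{ij}|_{g_s}^2=g_s^{-1}(\db\rho_{ij},\db\rho_{ij})$ exactly through the Fourier representation \eqref{eq:fourier_cometric_cpd}, and then estimate the resulting radial integral. Two preliminary checks are needed.

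First, $\rho_{ij}$ is $SO(d)$-invariant. So $\db\rho_{ij}$ annihilates $\mathfrak{so}(d)\cdot q$ and is a genuine quotient covector, and the quotient gradient norm is just $g_s^{-1}(\db\rho_{ij},\db\rho_{ij})$ restricted from $\Landz$.

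Second, the covector is centered. Exactly as in the proof of Proposition~\ref{prop:pd-speed-control}, $\db\rho_{ij}$ has components $\alpha_i=u$, $\alpha_j=-u$ and $\alpha_k=0$ otherwise, where $u=(q_i-q_j)/\rho_{ij}$ is a unit vector. Hence $\sum_k\alpha_k=0$, which is exactly the condition under which \eqref{eq:fourier_cometric_cpd} holds, including the regularized case $d=2$.

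With $x=q_i-q_j$, we get $F_\alpha(\xi)=u\bigl(e^{-i\xi^\top q_i}-e^{-i\xi^\top q_j}\bigr)$, and therefore
\begin{equation*}
    g_s^{-1}(\db\rho_{ij},\db\rho_{ij})=\frac{1}{(2\pi)^{d/2}}\int_{\mathbb{R}^d}2\bigl(1-\cos(\xi^\top x)\bigr)\,u^\top M(\xi)u\dd\xi .
\end{equation*}
We bound $u^\top M(\xi)u\leq\|M(\xi)\|_{op}$ using the operator norm estimate from Proposition~\ref{prop:pd_cometric_upper_bound}. We also use $2(1-\cos(\xi^\top x))\leq\min\{4,\|\xi\|^2\rho_{ij}^2\}$. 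Passing to polar coordinates then gives
\begin{equation*}
    g_s^{-1}(\db\rho_{ij},\db\rho_{ij})\leq C\int_0^\infty \frac{r^{d-3}\min\{4,r^2\rho_{ij}^2\}}{(1+\sigma^2r^2)^{s}}\dd r,
\end{equation*}
with $C$ depending only on $d$ and $\sigma$.

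We split this integral at $r=1/\rho_{ij}$.

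On $(0,1/\rho_{ij})$ the integrand is at most $\rho_{ij}^2r^{d-1}(1+\sigma^2r^2)^{-s}$. Near $0$ this behaves like $r^{d-1}$, with no singularity even for $d=2$. Near $\infty$ it behaves like $r^{d-1-2s}$, which is integrable on $(0,\infty)$ precisely because $2s>d$. So this piece is at most $C'\rho_{ij}^2$.

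On $(1/\rho_{ij},\infty)$ the integrand is at most $4\sigma^{-2s}r^{d-3-2s}$. Since $d-3-2s<-1$, this integrates to a constant times $\rho_{ij}^{2s+2-d}$. Because $2s+2-d>2$, for $\rho_{ij}<r_*$ (take for instance $r_*=1$) this piece is at most a constant times $\rho_{ij}^2$.

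Altogether $g_s^{-1}(\db\rho_{ij},\db\rho_{ij})\leq C''\rho_{ij}^2$. Since $\log(er_*/\rho_{ij})\geq1$ whenever $\rho_{ij}<r_*$, taking square roots gives the claimed bound, after enlarging $B_2$ to dominate $\sqrt{C''}$ and the constant from Proposition~\ref{prop:pd_cometric_upper_bound}. In fact the argument yields a bound without the logarithm. The logarithm would only be needed at the borderline $2s=d$, where the first piece grows like $\rho_{ij}^2\log(1/\rho_{ij})$.

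The main point requiring care is the first step: making sure the Fourier formula is legitimately applied to $\db\rho_{ij}$ when $d=2$, where $M$ is not integrable at the origin. I would handle this by noting that centering kills the $-1$ term in \eqref{eq:ks-continuous-representation-d2}, exactly as in the derivation of \eqref{eq:fourier_cometric_cpd}. Alternatively, one can observe directly that $1-\cos(\xi^\top x)=O(\|\xi\|^2)$ cancels the $\|\xi\|^{-2}$ singularity of $M$. The remaining estimates are routine.
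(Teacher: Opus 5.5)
Your proposal is correct and follows essentially the same route as the paper. Both arguments apply the Fourier representation \eqref{eq:fourier_cometric_cpd} to the centered covector $\db\rho_{ij}$, bound $|1-e^{i\xi^\top(q_i-q_j)}|^2\le\|\xi\|^2\rho_{ij}^2$ together with the operator-norm bound on $M(\xi)$, reduce to $\int_0^\infty r^{d-1}(1+\sigma^2r^2)^{-s}\dd r<\infty$ for $2s>d$, and then insert the logarithm via $\log(er_*/\rho_{ij})\ge1$ with $r_*=1$. Your split at $r=1/\rho_{ij}$ is harmless but unnecessary when $s>d/2$, since your first-piece bound already covers all of $(0,\infty)$, which is exactly what the paper does. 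As you note, the split only matters at the borderline $2s=d$.
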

    \begin{proof}
    We will calculate $g^{-1}_{s}(\db\rho_{ij}, \db\rho_{ij})$ for $[q]\in \Sreg$. By the same argument as in Proposition \ref{prop:pd-speed-control}, we have
    \begin{equation*}
        \db\rho_{ij} = (\nabla_{q_1} \rho_{ij}, \nabla_{q_2} \rho_{ij}, \dots, \nabla_{q_n} \rho_{ij}).
    \end{equation*}
    Each component is calculated as
    \begin{equation*}
        \nabla_{q_k} \rho_{ij} =
        \begin{cases}
            \frac{q_k - q_j}{\rho_{ij}}, & \text{if } k=i,\\
            \frac{q_k - q_i}{\rho_{ij}}, & \text{if } k=j,\\
            0, & \text{otherwise}.
        \end{cases}
    \end{equation*}
    We will now use the Fourier representation \eqref{eq:fourier_cometric_cpd} to calculate the cometric. We have
    \begin{align*}
    g^{-1}_{s}(\db\rho_{ij}, \db\rho_{ij}) = \frac{1}{(2\pi)^{d/2}} \int_{\mathbb{R}^d}F_{\db\rho_{ij}}(\xi)^* \hat{k}_s(\xi) F_{\db\rho_{ij}}(\xi) \dd\xi.
    \end{align*}
    We will calculate the bound of $\|F_{\db\rho_{ij}}(\xi)\|$ explicitly. By definition,
    \begin{align*}
        \norm{F_{\db\rho_{ij}}(\xi)} &= \norm{e^{-i  \xi^T q_i } \frac{q_i - q_j}{\rho_{ij}} + e^{-i \xi^T q_j} \frac{q_j - q_i}{\rho_{ij}}} \\ &= |1 - e^{i\xi^{\top} (q_j - q_i)}| \frac{\norm{q_i - q_j}}{\rho_{ij}} \\ &= |1 - e^{i\xi^{\top} (q_j - q_i)}| \leq  \|\xi \| \|q_i-q_j\| = \|\xi\| \rho_{ij},
    \end{align*}
    where we have used the fact that $|1 - e^{i \theta}| \leq |\theta|$ for any real number $\theta$.
    We combine this bound with the operator norm of $\hat{k}_s(\xi)$ calculated in Proposition \ref{prop:pd_cometric_upper_bound} to obtain an upper bound for $g^{-1}_{s}(\db\rho_{ij}, \db\rho_{ij})$:
    \begin{align*}
        g^{-1}_{s}(\db\rho_{ij}, \db\rho_{ij}) &\leq \frac{1}{(2\pi)^{d/2}} \int_{\mathbb{R}^d} \|\xi\|^2 \rho_{ij}^2 \|\hat{k}_s(\xi)\|_{\text{op}} \dd\xi \\ 
        &\leq \frac{\rho_{ij}^2}{(2\pi)^{d/2}}\int_{\mathbb{R}^d} \frac{\|\xi\|^2}{(2\pi)^{d/2}(1+\sigma^2\|\xi\|^2)^{s}\|\xi\|^2}\dd\xi \\ 
        &= \frac{\rho_{ij}^2}{(2\pi)^{d}}\int_{\mathbb{R}^d} \frac{1}{(1+\sigma^2\|\xi\|^2)^{s}}\dd\xi.
    \end{align*}
    We will now evaluate the integral by using polar coordinates. Let $\|\xi\| = r$, then $\db\xi = r^{d-1} \dd r \dd\Omega$, where $\db\Omega$ is again the surface measure of the $d-1$-dimensional unit sphere. The integral becomes
    \begin{align*}
        &\frac{\rho_{ij}^2}{(2\pi)^{d}} \int_0^\infty \int_{\mathbb{S}^{d-1}} \frac{r^{d-1}}{(1+\sigma^2 r^2)^{s}} \dd\Omega \dd r \notag\\
        &\quad= \frac{\rho_{ij}^2}{(2\pi)^{d}} \int_0^\infty \frac{r^{d-1}}{(1+\sigma^2 r^2)^{s}} \dd r \int_{\mathbb{S}^{d-1}} \dd\Omega \\
        &\quad= \frac{\rho_{ij}^2 \Omega(\mathbb{S}^{d-1})}{(2\pi)^{d}} \int_0^\infty \frac{r^{d-1}}{(1+\sigma^2 r^2)^{s}} \dd r.
    \end{align*}
    Now, the remaining $r$-integrand is of order $r^{d-1}$ around $r=0$ and of order $r^{d-1-2s} = r^{d-2s-1}$ around $r=\infty$. Therefore, the integral converges if $d-2s-1 < -1$, i.e., $s > d/2$, which is covered by our assumption. Therefore, we have shown the speed collision control in Assumption \ref{assumption:main_metric} except for the logarithmic term. To create the logarithmic term, set $r_*=1$ and assume that $0<\rho_{ij}<1$. We then have that $\log{\left(\frac{e}{\rho_{ij}}\right)} > 1$, so we have 
    \begin{align*}
        g^{-1}_{s}(\db\rho_{ij}, \db\rho_{ij}) &\leq (B_2)^2 \rho_{ij}^2 \log{\left(\frac{e r_*}{\rho_{ij}}\right)}, \notag\\
        (B_2)^2 &= \frac{\Omega(\mathbb{S}^{d-1})}{(2\pi)^{d}} \int_0^\infty \frac{r^{d-1}}{(1+\sigma^2 r^2)^{s}} \dd r.
    \end{align*}
    Taking the square root on both sides, we obtain the desired bound.
    \end{proof}

\section{Stochastic completeness after scale normalization}\label{sec:scale}
In this section, we will discuss how the argument for the unnormalized configuration model can be adapted to the normalized configuration model. Under the regular stratum of the quotient manifold $\Sreg$, the normalized submanifold $\Sregone$ is the set of configurations with unit norm, i.e., $r_0([q]) = 1$.  We note that $\Sregone$ is connected under $d\geq 2, n\geq d+2$ since the normalization map $N([q]) = \frac{[q]}{r_0([q])}$ is continuous and surjective onto $\Sregone$, so the connectedness of $\Sreg$ implies the connectedness of $\Sregone$. We recall that the connectedness matters to apply Masamune's theory on stochastic completeness.
\subsection{The assumptions descend to $\Sregone$}
We start by remarking that the assumptions on the cometric in Assumption \ref{assumption:main_metric} survive the restriction to the normalized submanifold $\Sregone$. Denote the given quotient metric on $\Sreg$ by $g$ and the induced metric on $\Sregone$ by $g_1$.

We first consider the comparison assumption. We recall that the metric in the submanifold is defined by the restriction of the ambient metric to the tangent space of the submanifold. Therefore, we invert the cometric bounds to metric bounds, and then restrict these bounds to the tangent space of the normalized submanifold $\Sregone$. Inverting again, we obtain the cometric bounds for the normalized submanifold.

Similarly, the collision speed assumption in Assumption \ref{assumption:main_metric} also descends to the normalized submanifold $\Sregone$ because the gradient on the submanifold is the orthogonal projection of the ambient gradient onto the tangent space of the submanifold, so
\begin{equation}
    \|\nabla_{g_1} \rho_{ij}\|_{g_1} \leq \|\nabla_{g} \rho_{ij}\|_{g} \leq B_2 \rho_{ij} \sqrt{\log{\left(\frac{e r_*}{\rho_{ij}}\right)}},
\end{equation}
whenever $\rho_{ij} < r_*$ for some constant $r_* > 0, B_2>0$.

The consequence is that the Lemma \ref{lemma:bounds_on_metric_balls} and Corollary \ref{cor:comparison_g_metric_balls} hold for the normalized submanifold $\Sregone$, though the $r_0$ bound is not necessary anymore.  The results can be used to prove the boundary characterization as in the unnormalized case.

\subsection{Boundary characterization after scale normalization}
We will now show the boundary characterization for the normalized submanifold $\Sregone$, which requires a different but shorter argument compared to the unnormalized case.

\begin{theorem}
    Consider the metric completion of the normalized submanifold $\Sregone$ with respect to the induced distance $d_{g_1}$ by the metric $g_1$. The functions $\delta$ and $\sigma_{d}$ on $\Sregone$ both extend continuously to its completion. Moreover, any point $[q]\in \partial \Sregone$ satisfies $\delta([q])>0$ and $\sigma_{d}([q]) = 0$, where $\delta$ and $\sigma_d$ denote the continuous extensions.
\end{theorem}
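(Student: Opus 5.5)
We mirror the proof of Proposition~\ref{prop:boundary_characterization}, using that on $\Sregone$ the quantity $r_0\equiv 1$ is constant; this makes both the comparison step and the compactness step simpler.

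\textbf{Step 1: a comparison between $d_{h_1}$ and $d_{g_1}$ on balls.} Fix a base point $[q_0]\in\Sregone$. Let $h_1$ denote the metric that $h$ induces on $\Sregone$. By the descended versions of Lemma~\ref{lemma:bounds_on_metric_balls} and Corollary~\ref{cor:comparison_g_metric_balls}, on every ball $B_{g_1}([q_0],R)$ with $R\geq1$ we have $\delta\geq C_1^{-1}e^{-C_1R^2}$. Assumption~\ref{assumption:main_metric}(3), restricted as in the previous subsection and with $r_0=1$, then gives
\[
c(R)\,h_1^{-1}\leq g_1^{-1}\leq B_2\,h_1^{-1}.
\]
In particular $h_1\leq B_2\,g_1$ holds globally, so no ball-dependent constant is needed in this direction. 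Since $h_1$ is the restriction of $h$ to the submanifold, $d_h\leq d_{h_1}$. Hence
\[
d_h([x],[y])\leq \sqrt{B_2}\,d_{g_1}([x],[y]).
\]
This is simpler than the curve-in-a-ball argument used in the unnormalized case.

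\textbf{Step 2: continuous extension.} The functions $\delta$ and $\sigma_d$ are Lipschitz in the Frobenius norm. Because they are $SO(d)$-invariant, they are also Lipschitz with respect to $d_h$. By Step~1 they are therefore globally Lipschitz with respect to $d_{g_1}$. Lipschitz functions are Cauchy continuous, so both extend continuously to the completion, as in \cite[Theorem 19.27]{Schechter1997}.

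\textbf{Step 3: the boundary characterization.} Take $[q]\in\partial\Sregone$ and a Cauchy sequence $[q_n]\to[q]$. The sequence is bounded, so it lies in some ball $B_{g_1}([q_0],R)$. The lower bound on $\delta$ from Step~1 then passes to the limit and gives $\delta([q])>0$.

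For $\sigma_d$, suppose for contradiction that $\sigma_d([q])>0$. Choose representatives $q_n$ with $\sum_i\|q_i\|^2=1$. These lie on the unit sphere, which is compact, so a subsequence converges to some $\bar q$. The limit $\bar q$ has $r_0(\bar q)=1$, and by continuity $\delta(\bar q)>0$ and $\sigma_d(\bar q)>0$. Therefore $[\bar q]\in\Sregone$.

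It remains to show $d_{g_1}([q_{n_k}],[\bar q])\to0$, which will force $[q]=[\bar q]\notin\partial\Sregone$ and give the contradiction. Near $\bar q$, the quantities $\delta$ and $\sigma_d$ are bounded below, so Assumption~\ref{assumption:main_metric} gives $g_1\leq E\,h_1$ on a neighbourhood of $[\bar q]$. Connect $q_{n_k}$ to $\bar q$ by the normalized straight segment
\[
t\mapsto \frac{(1-t)q_{n_k}+t\bar q}{\|(1-t)q_{n_k}+t\bar q\|_F}.
\]
For large $k$ this path stays in the neighbourhood. Its Euclidean length is $O(\|q_{n_k}-\bar q\|_F)$, because the normalization map is smooth near the unit sphere.

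\textbf{Main obstacle.} The only genuinely new point is this last step: building a short path that lies inside $\Sregone$ rather than in $\Sreg$. The radial projection above handles it. One must check that the path avoids $0$ and stays in the regular stratum; both hold for $k$ large, since $\bar q$ is full-rank and collision-free.
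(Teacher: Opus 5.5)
Your proof is correct and follows essentially the same route as the paper. The lower bound on $\delta$ comes from the descended Lemma~\ref{lemma:bounds_on_metric_balls}, and the contradiction for $\sigma_d$ uses compactness of the unit sphere and a radially normalized straight segment, as in the paper. The one small variation is in the extension step: you derive the global bound $d_h\le\sqrt{B_2}\,d_{g_1}$ directly from $r_0\equiv1$, whereas the paper uses $d_g\le d_{g_1}$ to inherit Cauchy continuity from Proposition~\ref{prop:boundary_characterization}. Both arguments are valid.
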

\begin{proof}
    Similar to the unnormalized case, we first prove that $\delta$ and $\sigma_d$ are Cauchy continuous, but this is true because $d_{g} \leq d_{g_1}$ so a Cauchy sequence in $g_1$ is Cauchy in $d_g$, and the previous Cauchy continuity on $g$ translates to our case.

    We will now show that any boundary point $[q]\in \partial \Sregone$ satisfies $\delta([q])>0$ and $\sigma_{d}([q]) = 0$. The former is true by the same continuity argument as in the unnormalized case. For the latter, we do essentially the same argument, but we will record it for completeness. Assume for the sake of contradiction that $\sigma_d([q]) > 0$. In that case, we show that any point in the boundary of the completion can be realized as a configuration rather than an abstract completion point. 
    
    To show this, notice that, by definition of the metric boundary, there exists a $g_1$-Cauchy sequence $[q_n]\in \Sregone$ that converges to $[q]$ in the completion. Take a representative $q_n$ from each quotient point $[q_n]$ and form a sequence in $\Landz$ with constraint $r_0(q_n)=1$. In the ambient Euclidean sphere, there exists a convergent subsequence $q_{n_k}$ by compactness. Now denote the limit by $\bar{q}$. We will show that the quotient point $[\bar{q}]$ is a limit of $[q_{n_k}]$ in the completion, and by the uniqueness of limits $[\bar{q}]=[q]$. In fact, by continuity, we can take a Euclidean ball containing $\bar{q}$ and $q_{n_k}$ so that $\delta(q) \geq \frac{1}{2}\delta(\bar{q})$ and $\sigma_d>0$ for $q$ inside the ball for large enough $k$. Connect $\bar{q}$ and $q_{n_k}$ by the straight line $\gamma(t)$ and normalize them to obtain $z(t) = \gamma(t)/\|\gamma(t)\|_F$. If $k$ is large enough, the Frobenius norm of the difference $\|\gamma(t) - \bar{q}\|_F$ is small for all $t$, so $\|\gamma(t)\|_F$ is bounded both from above and below. So, $\delta(z(t)) = \frac{\delta(\gamma(t))}{\|\gamma(t)\|_F}$ is uniformly bounded from below. Similarly, $\sigma_d(z(t))>0$ because the singular values scale linearly. Now, the length of $z(t)$ satisfies, by the analogue of the comparison assumption in Assumption \ref{assumption:main_metric},
    \begin{equation}
        d_{g_1}([q_{n_k}], [\bar{q}])\leq L_{g_1}(z(t)) \leq E_3 L_{h_1}(z) \leq E_3 L_{\mathbb{S}}(z) =  E_3 d_{\mathbb{S}}(q_{n_k}, \bar{q}) \to 0, 
    \end{equation}
    where $E_3$ is defined analogously to the unnormalized case, $L_{\mathbb{S}}$ is the length on the sphere, and $d_{\mathbb{S}}$ is the distance on the sphere. Note that the last equality comes from the fact that normalizing the straight line traces a geodesic on the sphere. Therefore, $[q_{n_k}]$ converges to $[\bar{q}]$, so by the uniqueness $[q] = [\bar{q}]$, a contradiction since $[\bar{q}]\in \Sregone$ by the assumption that $\sigma_d([q]) = \sigma_d(\bar{q}) > 0$.
\end{proof}

\subsection{The SVD formula on the normalized submanifold}
We will now prove the SVD formula for the normalized submanifold, which follows immediately from the unnormalized case.

\begin{proposition}
    \label{prop:svd-formula-normalized}
    There is a constant $G_1>0$, depending only on $n$ and $d$, such that for every $\epsilon>0$ and every nonnegative nonincreasing function $w\colon (0,\infty) \to [0,\infty)$, 
    \begin{equation}
        \int_{0 < \sigma_d < \epsilon} w(\sigma_d) \dd\mu_{h_1} \leq (n-d)G_1\int_{0}^{\epsilon}w(s) s^{n-d-1}\dd s.
    \end{equation}
    In particular, taking $w\equiv 1$ gives the volume bound
    \begin{equation}
        \mu_{h_1}(\{[q]\in \Sregone : 0 < \sigma_d([q]) < \epsilon \}) \leq G_1\epsilon^{n-d}.
    \end{equation}
\end{proposition}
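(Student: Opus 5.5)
Our plan is to deduce the bound from Proposition~\ref{ref:singular_value_bound} by a coarea (cone) argument. The cone over $\Sregone$ sits inside $\Sreg$ via the polar coordinates $(t,[z])\mapsto t[z]$, with $t=r_0\in(0,\infty)$ and $[z]\in\Sregone$. Since $h$ is the quotient of a Euclidean metric and scaling commutes with the $SO(d)$ action, $h$ is a metric cone over $h_1$. Concretely, $h = \db t^2 + t^2 h_1$, because $\nabla_h r_0$ is the radial unit vector, orthogonal to the level sets. Hence
\begin{equation*}
    \db\mu_h = t^{D-1}\,\db t\,\db\mu_{h_1}, \qquad D=\dim\Sreg .
\end{equation*}
We also use that $\sigma_d$ is $1$-homogeneous: $\sigma_d(t[z])=t\,\sigma_d([z])$.

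Next we integrate over the shell $\{1/2<r_0<1\}$. For $t\in(1/2,1)$, the condition $\sigma_d([z])<\epsilon$ gives $\sigma_d(t[z])=t\sigma_d([z])<\epsilon$. Because $w$ is nonincreasing and $t\sigma_d([z])\le\sigma_d([z])$, we also have $w(\sigma_d(t[z]))\ge w(\sigma_d([z]))$. Therefore
\begin{align*}
    \int_{\{0<\sigma_d<\epsilon\}} w(\sigma_d)\,\db\mu_{h_1}\int_{1/2}^1 t^{D-1}\,\db t
    &\le \int_{\{r_0<1,\ 0<\sigma_d<\epsilon\}} w(\sigma_d)\,\db\mu_h\\
    &\le (n-d)G_1(1)\int_0^\epsilon w(s)s^{n-d-1}\,\db s ,
\end{align*}
where the second inequality is Proposition~\ref{ref:singular_value_bound} applied with $R=1$. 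Dividing by $c_D=\int_{1/2}^1 t^{D-1}\,\db t=(1-2^{-D})/D>0$ proves the claim with $G_1:=G_1(1)/c_D$, which depends only on $n$ and $d$. The volume bound follows by taking $w\equiv1$ and evaluating $\int_0^\epsilon s^{n-d-1}\,\db s=\epsilon^{n-d}/(n-d)$.

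The main obstacle is justifying the cone structure of the quotient metric, i.e.\ the volume decomposition $\db\mu_h=t^{D-1}\db t\,\db\mu_{h_1}$. We would check this upstairs on the full-rank locus of $\Landz$, which is a Euclidean cone. There the radial field $q\mapsto q$ is horizontal: $\langle q, Aq\rangle=\operatorname{tr}(q^\top A q)=0$ for skew $A$. The dilations are $h$-homotopies that commute with the submersion, so the radial direction descends to a unit normal of $\Sregone$, and dilation by $t$ scales the horizontal tangent spaces of the level sets by $t$. Alternatively, one can avoid geometry and read the cone structure off Theorem~\ref{thm:volume_density_quotient}. The density there is homogeneous of degree $D-d$ in $(\sigma_1,\dots,\sigma_d)$, so passing to polar coordinates in $D$ with $t=(\sum\sigma_i^2)^{1/2}$ gives the factor $t^{D-1}$ times a density on the sphere. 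That density equals $\db\mu_{h_1}$, since the unit normal is radial.
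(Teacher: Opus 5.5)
Your proposal is correct and follows essentially the same route as the paper. Both use the cone decomposition $\db\mu_h = t^{D-1}\,\db t\,\db\mu_{h_1}$, the homogeneity $\sigma_d(t[z])=t\sigma_d([z])$ together with monotonicity of $w$ to compare with Proposition~\ref{ref:singular_value_bound} at $R=1$, and then divide by the radial integral. The differences are cosmetic: you integrate over the shell $t\in(1/2,1)$ instead of $(0,1)$, which gives the constant $DG_1(1)/(1-2^{-D})$ rather than $DG_1(1)$, and you sketch a justification of the cone structure of $h$ that the paper simply asserts.
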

Before the proof, we remark that the assumption on $w$ is stronger here. In the unnormalized case, we only assumed nonnegativity on $w$, but now we assume that it is nonincreasing. As we demonstrate later, this does not cause any issues in downstream use.

\begin{proof}
    We use the polar decomposition $[q] = [r\bar{q}]$ where $q\in \Sreg$, $\bar{q}\in \Sregone$ and $r = \|q\|_F$. The Euclidean quotient measures decompose as $\db\mu_h = r^{D-1}\dd r \dd\mu_{h_1}$ where $D$ is the dimension of $\Sreg$. Therefore, by Proposition \ref{ref:singular_value_bound} with $R=1$, we obtain
    \begin{align*}
    \int_{\{\sigma_d < \epsilon\}} \int_{0}^{1} w(\sigma_d(r\bar{q})) r^{D-1}\dd r \dd\mu_{h_1}
    &\leq (n-d)G_1(1) \int_{0}^{\epsilon} w(s)s^{n-d-1}\dd s.
    \end{align*}
    We omit the argument $1$ in $G_1(1)$ and simply write $G_1$. We note that the singular values scale linearly, so $\sigma_d(r\bar{q}) = r\sigma_d(\bar{q})$, and by the nonincreasing assumption, we have $w(r\sigma_d(\bar{q})) \geq  w(\sigma_d(\bar{q}))$ for $0<r<1$. Therefore,
    \begin{align*}
        \int_{\{\sigma_d < \epsilon\}} \int_{0}^{1} w(\sigma_d(r\bar{q})) r^{D-1}\dd r \dd\mu_{h_1} \geq \int_{\{\sigma_d < \epsilon\}} \int_{0}^{1} w(\sigma_d(\bar{q})) r^{D-1}\dd r \dd\mu_{h_1}  \\ \geq \left(\int_{0}^{1}r^{D-1}\dd r\right)\int_{\{\sigma_d < \epsilon\}} w(\sigma_d(\bar{q}))  \dd\mu_{h_1},
    \end{align*}
    and the result follows by redefining $G_1$ as $G_1/\int_{0}^{1}r^{D-1}dr = DG_1$.
\end{proof}

\subsection{Almost polar boundary and volume growth condition}
We will now consider the proof for the two main components for the stochastic completeness: the almost polar boundary and the volume growth condition.

We will first prove the fact that the metric boundary has capacity zero. Because of the tools we have established so far, the proof can be written almost verbatim with one fix. We recall that the SVD formula for the normalized case, namely, Proposition \ref{prop:svd-formula-normalized}, only applies to nonincreasing functions. However, the proof of the unnormalized case, Theorem \ref{thm:boundary-almost-polar}, applies Proposition \ref{ref:singular_value_bound} to
\begin{equation*}
w=|\psi_{\epsilon}|^2  + |\psi_{\epsilon}'|^2,\quad  
        \psi_{\epsilon}(s) = \begin{cases}
            1, & s \leq \epsilon, \\
            2 - \frac{s}{\epsilon}, & \epsilon < s < 2\epsilon, \\
            0, & s \geq 2\epsilon,
        \end{cases}
\end{equation*}
which is not nonincreasing because $\psi' = (-1/\epsilon) 1_{\{\epsilon < s < 2\epsilon\}}$ where $1_S$ is the indicator for the set $S$. This can be resolved by taking the nonincreasing majorant $(1 + 1/\epsilon^2)1_{\{0< s<2\epsilon\}}$ as $w$, which is what we do regardless to calculate the integral in the unnormalized case. Therefore, after this fix, the proof of almost polarity follows after replacing metrics and dimensions with the corresponding versions.

We will now move on to the proof of the fact that the volume of the metric ball $v(r)$ of radius $r$ satisfies $\int^{\infty} \frac{r}{\log{v(r)}}\dd r =\infty$ or $\int^{\infty} \frac{r}{v(r)}\dd r =\infty$. The main input to the proof in the unnormalized case was Proposition \ref{prop:volume_growth_metric_balls}, which then relies on Lemma \ref{lemma:bounds_on_metric_balls}, Corollary \ref{cor:comparison_g_metric_balls} and the SVD formula in Proposition \ref{ref:singular_value_bound}. As we have discussed, these results can be proved for the normalized version with no or slight modification, so the proof works after replacing metrics and dimensions. 

By the above two results, we have stochastic completeness for the scale normalized submanifold.

\bibliographystyle{amsplain}
\bibliography{references}

\end{document}